\newtheorem{thm}{Theorem}[section]
\newtheorem{cor}[thm]{Corollary}
\newtheorem{lem}[thm]{Lemma}
\newtheorem{prop}[thm]{Proposition}
\theoremstyle{definition}
\newtheorem{defn}[thm]{Definition}
\theoremstyle{remark}
\newtheorem{rem}[thm]{Remark}
\newtheorem{exa}[thm]{Example}
\def\AA{\mathcal{A}}
\newcommand{\abs}[1]{\left\vert {#1} \right\vert}
\title{ BMT Independence } 
\date{}
\author{Octavio Arizmendi, Saul Rogelio Mendoza, Josué Vazquez-Becerra}
\begin{document}
\maketitle

\abstract{
We introduce the notion of BMT independence, allowing us to take arbitrary mixtures of boolean, monotone, and tensor independence and generalizing the notion of BM independence of Wysoczanski. 
Pair-wise independence relations are encoded through a directed graph, which in turn determines the way mixed moments must be computed.   
Corresponding Central and Poisson-Type Limit Theorems are provided along with an explicit construction to realize BMT independent random variables as bounded operators on certain Hilbert space. 
}

\tableofcontents

\section{Introduction}

This paper tries to extend some notions in non-commutative probability, where the fundamental framework is a  \emph{non-commutative probability space}, this is, a pair $(\AA,\varphi)$ where $\AA$ is a complex algebra with multiplicative identity $1_\AA$ and $\varphi:\AA\to\mathbb{C}$ is a linear functional so that $\varphi(1_\AA)=1$. 

In this framework, from the probabilistic viewpoint, a fundamental notion is that of independence and the generality of this framework allows considering multiple notions of independence. In order to give a sense of this notion, and in the seek to try to find a way to decide which of these independences one should look at, Ben Ghorbal and Schürmann \cite{GC2002} axiomatized the notion of a \textit{universal products}. For each of these universal products of probability spaces one can associate a notion of independence. They showed that there are three notions of independence satisfying the axioms coming from the universal products. These are Boolean, free and tensor independence. The story was complemented by the work of Muraki \cite{Muraki2003}, when he  generalized the notion of universal product, introducing \emph{natural products}, by removing the \emph{commutativity axiom} (now known as symmetry axiom), which states, in simple words, that for algebras $\mathcal{A}_1$ and  $\mathcal{A}_2$, that fact that $\mathcal{A}_1$ is independent of $\mathcal{A}_2$, implies that $\mathcal{A}_2$ should be independent of $\mathcal{A}_1$ . 

In this new setting, Muraki, showed in \cite{Muraki2003}, that there are only five natural notions of independence, these are, the free, tensor, Boolean, monotone and antimonotone independences. 
In this paper we will be interested in tensor, Boolean and monotone (or antimonotone \footnote{From the fact that $a$ monotone independent to $b$ if and only if $b$ independent antimonotone to $a$, we usually work only with monotone independence (implying the corresponding for the antimonotone independence).
}) cases.

While from the above, one may think of free, Boolean, monotone and tensor independence as parallel theories with no  interaction between them, in this paper we will be interested in mixtures of them. This is, of course, not the first paper that considers such a setting.  In 2007, Wysoczanski \cite{BM2007} introduced BM independence, a generalization of monotone and Boolean independence, giving a framework where some mixtures (following a partial order) of those independences can be represented, giving, in addition, a central limit theorem whose limit is a symmetric distribution that has as particular cases the symmetric Bernoulli and the arcsine distribution. The work \cite{BF2013} gives a framework that combines Boolean and free independences. On a similar vein, the called $\Lambda-$freeness defined in \cite{Lambda2004} mixes free and tensor independence in an algebraic framework. Following the last work, Speicher and Wysoczanski (\cite{eps2016}) represented any mixture of tensor and free variables in terms of a symmetric matrix $\epsilon$ with non-diagonal entries either $0$ or $1$, where $\epsilon_{ij}=0$ represent that algebras $\mathcal{A}_i$ and $\mathcal{A}_j$ are free, and $\epsilon_{ij}=1$ represent that algebras $\mathcal{A}_i$ and $\mathcal{A}_j$ commute and are tensor independent. Finally, the work of   Lenczewski \cite{Len1}, considers  $\Lambda$-Boolean independence and $\Lambda$-monotone independence which mix tensor independence with Boolean independence and with monotone independence, respectively.

In this paper we create a framework that generalizes BM independence, $\Lambda$-Boolean and $\Lambda$-monotone independence, allowing to consider mixtures of tensor, Boolean and monotone independent algebras. This appears to be the first time 3 of the natural independences are combined. We call this new notion of independence BMT (Boolean, monotone and tensor) independence.

To describe such notion we need a graph telling the information about pairwise independence. The idea is  that, vertices correspond to random variables (or algebras), and the relation between two of them is as follows: Vertices joined with a directed edge will correspond to monotone independence between elements, vertices joined by an undirected edge will correspond to tensor independence, and unjoined vertices will correspond to Boolean independence. 
As an example, if we have the variables $\{X_1,X_2,X_3,X_4,X_5\}$ and its independence graph $G$ is the one in the Figure \ref{graph1}, we want the following relations. 

\begin{itemize}
    \item  $X_5$ is Boolean independent from $X_1,X_2,X_3,X_4$
\item $X_4$ is Boolean independent from $X_3$ and $X_1$.
\item $X_2$ is classical independent from $X_4$.
\item  $X_2$ is monotonically independent from $X_3$.
\item $X_1$ monotonically independent from $X_2$ and from $X_3$.
\end{itemize}

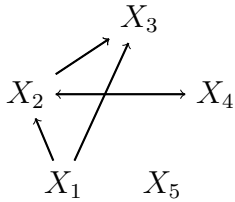
\begin{figure} %[h]
\centering 
  \begin{tikzpicture}
\node (1) at (6.5,.8) {$X_1$};
\node  (2) at (6,2) {$X_2$};
\node (3) at (7.5,3) {$X_3$};
\node(4) at (8.5,2) {$X_4$};
\node(5) at (7.8,.8) {$X_5$};

\path[draw,thick](4) edge node {} (2);
\path[draw,thick](2) edge node {} (3);
\path[draw,thick](1) edge node {} (2);
\path[draw,thick](1) edge node {} (3);
% arrows
\draw[->]  (1) edge (2) edge (3) ;
\draw[->]  (2) edge (4) ;
\draw[->]  (4) edge (2) ;
\draw[->]  (2) edge (3) ;
\draw[->]  (2) edge (3) ;
%  \node [label=below:$P_{5}$] (*) at (7,0.5) {};

 \end{tikzpicture}
  \caption{Independence graph of $(X_1,X_2,X_3,X_4,X_5)$.}
  \label{graph1}
\end{figure}

The main question is, of course, how to define the whole joint distribution, involving not only pairwise relations, and if this joint distribution is representable in a $C^{\ast}$-probability space.  To be clear, BMT variables will have a graph $G$ telling us the independence relations between pairs of variables/algebras, and in the framework we develop, we will have a way for computing  all the mixed moments between these variables.

The main contributions of this paper are, firstly, to give a consistent way to calculate such mixed moment, see Definition \ref{def: BMT independence}, and secondly, to give a concrete construction which satisfies such notion of independence.

Now, for each of the five notions of independence, Boolean, Free, monotone and tensor, there exists a central limit theorem associated with it. Under the tensor independence hypothesis, the limit distribution is the normal distribution; with free independence, the limit is the semicircle distribution; the symmetrical Bernoulli distribution corresponds to Boolean independence: while the arcsine distribution is the limit for the monotone central limit theorem. Here we note that in the usual monotone central limit theorem a total order is imposed in the algebras. In this framework, it is possible to induce monotone and antimonotone independent variables without assuming any specific order. Our third main contribution is proving a central limit theorem for BMT independent variables with a sequence of independence graphs, having as particular cases the monotone, tensor and Boolean central limit theorems.  Similarly, we prove s Poisson limit theorem or law of small numbers.

%worth asking whether we can have more than one independence at the variables of the central limit theorem, gaining more flexibility at not asking for the same kind of independence between our variables.}

This work is distributed as follows. In Section 2, we introduce important definitions on graphs and partitions. We proving some new properties of these objects, which will be used crucially in the rest of the paper. In section 3, we introduce the notion of BMT independence with respect to a graph. We provide the first properties and relation with other notions of independence.

In Section 4, we present a model where we deploy the variables whose pairing independence relations are given by a graph $G$.

In Section 5, we present the BMT central limit theorem. We give examples, including Boolean, Free and Monotone CLT's. We also considers some relations between different graphs. Similarly, in Section 6 we consider a Poisson Limit theoerem. We conclude with some open questions and remarks.

%
%We now recall the definition of BM independence from \cite{BM2010}.

\section{Set Partitions and Graphs}

%Our main contributions rely upon the analysis of joint moments, which follows the combinatorial approach to Non-Commutative Probability. 
%
In this section, we introduce the main combinatorial objects that are used in our analysis of BMT independent random variables. 
First, we describe different types of set partitions. 
Second, we review some terminology on directed graphs. 
Finally, we define the kernel of a function subordinated to a directed graph, which is the object that determines how joint moments of BMT independent random variables must be computed. 

\subsection{Set partitions}

\begin{defn}
A \emph{partition} $\pi = \{ B_1, B_2, \ldots, B_r\}$ of a non-empty set $S$ is a set of non-empty and pair-wise disjoint subsets of $S$ whose union is $S$,
i.e., $B \subset S$ and $B \neq \emptyset$  for every $B \in \pi$, $B\cap B' \neq \emptyset$ implies $B=B'$ for all $B,B'\in \pi$, and $\cup_{B \in \pi } B = S$. 
\end{defn}
We refer to the elements of a partition as \emph{blocks}, and  
the total number of blocks in partition $\pi$ will be denoted by $\#(\pi)$. 
Moreover, a block is said to be \emph{even} if it has even cardinality and \emph{odd} otherwise. 
A partition containing only even blocks is called \emph{even}, but if all of its blocks have exactly two elements we will refer to it as a \emph{pairing}.  

\begin{exa}\label{example_partitions_1}
The sets $\pi_{1} = \{\{1,3\},\{2,4,5,6\}\}$, $\pi_{2} = \{\{1,3,6\},\{2\},\{4,5\}\}$, and $\pi_{3} = \{\{1,3\}  $ $\{4,6\},\{2,5\}\}$ are all partitions of $\{1,2,3,4,5,6\}$. 
The partitions $\pi_{1}$ and $\pi_{3}$ are both even, but while $\pi_{3}$ is a pairing, $\pi_{1}$ is not. 
The partition $\pi_{2}$ is neither even nor odd since it contains two odd blocks, $\{2\}$ and $\{1,6,3\}$, and one even block, $\{4,2\}$. 
\end{exa} 

The set of all partitions of $S$, the set of all even partitions of $S$, and the set of all pairing partitions of $S$ are denoted by $P(S)$, $P_{\text{even}}(S)$, and $P_{2}(S)$, respectively. 
We let $[m]$ and $[\ell,m]$ denote the set of integers $\{1,2,\ldots, m\}$ and $\{\ell,\ell+1,\ldots,m\}$, respectively, for any integers $0 \leq \ell \leq m$. 
When referring to set partitions of $[m]$, we will omit the square brackets. 
For instance, we will write $P_{\text{even}}(m)$ instead of $P_{\text{even}}([ m])$.  

Each partition $\pi \in P(m)$ can be represented graphically by writing the numbers $1,2,\ldots,m$ on a line, drawing vertical lines above each number with matching heights for numbers in the same block, and joining with a horizontal line the vertical lines of the numbers that belong to the same block, see Figure \ref{fig:partition_examples}. 
This leads to the concepts of \emph{nesting} and \emph{crossing} blocks. 

\begin{defn}
Suppose we are given two distinct blocks  $B=\{ b_1 < b_2 < \cdots < b_r\}$ and $C=\{ c_1 < c_2 < \cdots < c_s\}$ from a partition $\pi \in P(m)$. 
We say that $B$ \emph{is nested inside} $C$ if there exists $k \in [1,r-1]$ such that $c_k < b_j < c_{k+1}$ for every $b_j \in B$. 
Additionally, we say $B,C \in \pi$ \emph{cross each other} if there exist $b_i,b_j \in B$ and $c_k,
c_\ell \in C$ such that $b_i < c_k < b_j < c_\ell$. 
\end{defn}

Every partition $\pi \in P(S)$ is equivalent to an equivalence relation $\sim_{\pi}$ on $S$ where $k \sim_{\pi} {k'}$ if and only if $k$ and ${k'}$ belong to the same block of $\pi$. 
Hence, the equivalence relation $\sim_{\pi}$ has the blocks of $\pi$ as equivalence classes.  
The set of partitions $P(S)$ is partially ordered by refinement: we put $\pi \leq \theta$, and say that $\pi$ is a \emph{refinement} of $\theta$, if every block of $\pi$ is contained in some block of $\theta$. 
Notice that $\pi \leq \theta$ if and only if $k \sim_{\pi} {k'}$ implies $k \sim_{\theta} {k'} $ for all $k,{k'} \in S$. 
In Example \ref{example_partitions_1}, the partition $\pi_{3}$ is a refinement of $\pi_{1}$, and there is no other refinement between $\pi_{1}$, $\pi_{2}$, and $\pi_{3}$.  

\begin{figure}
	\centering
	\begin{subfigure}[b]{0.3\textwidth}
		\centering
		\includegraphics[width=\textwidth]{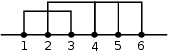}
		\caption{$\pi_1 = \{\{1,3\},\{2,4,5,6\}\}$}
		\label{fig:partition_example_1}
	\end{subfigure}
	\hfill
	\begin{subfigure}[b]{0.3\textwidth}
		\centering
		\includegraphics[width=\textwidth]{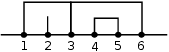}
		\caption{$\pi_2 = \{1,3,6\},\{2\},\{\{4,5\}\}$}
		\label{fig:partition_example_2}
	\end{subfigure}
	\hfill
	\begin{subfigure}[b]{0.3\textwidth}
		\centering
		\includegraphics[width=\textwidth]{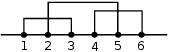}
		\caption{$\pi_3 = \{ \{1,3\},\{2,5\},\{4,6\}\}$}
		\label{fig:partition_example_3}
	\end{subfigure}
	\caption{Graphical representation for partitions $\pi_1$, $\pi_2$, and $\pi_3$ from Example \ref{example_partitions_1}}
	\label{fig:partition_examples}
\end{figure}

\subsection{Digraphs}
%\subsection{Digraphs and the kernel notation}

\begin{defn}
A \emph{directed graph},  or simply \emph{digraph}, is a pair $G = (V,E)$ where $V$ is a non-empty set, called \emph{vertex set}, and $E$ is a (possibly empty) subset of the Cartesian product $V \times V$, called \emph{edge set}. 
\end{defn}

Given two digraphs $G_1=(V_1,E_2)$ and $G_2=(V_2,E_2)$, we say that $G_1$ is a \emph{subgraph} of $G_2$, and denote this by $G_1 \subset G_2$, if $V_1 \subset V_2$ and $E_1 \subset E_2$.
All digraphs in this paper are assumed to be \emph{simple}, i.e., they contain no \emph{loops} ---edges of the form $(v,v)$. 
The following are some types of digraphs that will be considered in this paper as they concern Boolean, monotone, and tensor independence: 
\begin{itemize}
	\item \emph{Empty digraph.} These are digraphs $G = (V,E)$ without edges, so $E$ is the empty set. 
	\item \emph{Complete digraph.} These are digraphs $G = (V,E)$ containing all possible edges, so every ordered pair $(v,w) \in V \times V$ with $v\neq w$ belongs to $E$. 
	\item \emph{Digraph of a partial order.} These are digraphs $G = (V,E)$ where the vertex set $V$ has a partial order $\preceq$ and the edge set $E$ contains an ordered pair $(v,w) \in V \times V$ if and only if $v \prec w$, i.e., $v \preceq w$ and $v \neq w$. 
\end{itemize}

\subsection{The kernel notation}

Let $S$ and $V$ be non-empty sets. 
Given any function $\bm{i}: S \rightarrow V$,  we make the convention of taking $i_{k} = \bm{i}(k)$ for every $k \in S$.  
Additionally, if  $S = [ m]  $ for some integer $m \geq 1$, then each function  $\bm{i}:  S \rightarrow V$ will be identified with the tuple $(i_{1},i_{2},\ldots,i_{m})$. 

\begin{defn} 
The \emph{kernel} of a function $\bm{i} : S \rightarrow V$ is the partition of $S$ determined by the equivalence relation $k \sim {k'}$ if and only if $i_k = i_{k'}$. 
This partition is denoted by $\ker[\bm{i}]$.
\end{defn}

\begin{rem}
Notice that $\ker[\bm{i}]$ coincides with the partition of $S$ whose blocks are all non-empty pre-images of $\bm{i}$, so we have   
$\ker[\bm{i}] = \{ \{k \in S \mid i_k = v \} \neq \emptyset \mid v \in V  \}$. 
Furthermore, for any partition $\pi \in P(S)$, the condition $\pi \leq \ker[\bm{i}]$ holds if and only if $\bm{i}$ is constant when restricted to each block of $\pi$, i.e., $i_{k} = i_{\ell}$ whenever $k,\ell \in B$ and $B \in \pi$.  
\end{rem}

\begin{exa} 
The function $\bm{i}:[6] \rightarrow [5]$ given by $ (i_{1},i_{2},i_{3},i_{4},i_{5},i_{6}) = (4,1,3,4,1,4)$ has kernel $\ker[\bm{i}] = \{\{1,4,6\},\{2,5\},\{3\}\}$. 
The function $\bm{j}:[6] \rightarrow [5]$ given by $ (j_{1},j_{2},j_{3},j_{4},j_{5},j_{6}) = (5,1,5,4,1,4)$ has kernel $\ker[\bm{i}] = \{\{1,3\},\{2,5\},\{4,6\}\}$. 
\end{exa} 

\begin{defn}\label{kerG} 
The \emph{kernel} of a function $\bm{i} : S \rightarrow V$ \emph{subordinated} to a digraph $G =(V,E)$ is the partition of $S$ determined by the equivalence relation $k \sim k'$ if and only if $i_k = i_{k'}$ 
and $(i_\ell,i_k)$ is an edge of $G$ whenever $i_k \neq i_\ell$ and either $k < \ell < k'$ or $k' < \ell < k$.
We denote this partition by $\ker_G[\bm{i}]$.
\end{defn}

\begin{rem}\label{rmk:kernel_g_extension}
The partition $\ker_G[\bm{i}]$  is a refinement of the partition $\ker[\bm{i}]$. 
Moreover, the second condition in the equivalence relation defining $\ker_G[\bm{i}]$ only concerns $G_{\bm{i}}$ ---the restriction of $G$ to the vertices $i_1,i_2, \ldots, i_m$--- and not the whole graph $G$. 
Hence, if $G_{\bm{i}}$ is the complete graph, the second condition is immediately satisfied and $\ker_G[\bm{i}]$ is defined only by the relation $k \sim k'$ whenever $i_k = i_{k'}$, which is exactly the definition of $\ker[\bm{i}]$, yielding $\ker_G[\bm{i}]=\ker[\bm{i}]$ in this case. 
Therefore, $\ker_G[\bm{i}]$ is not only a refinement but also a generalization of $\ker[\bm i]$. 	
\end{rem}

%\begin{exa} 	Let us consider the index $\bm{i} =(1,8,8,4,1,5,8,7,6,7,6)$ ... \textcolor{purple}{\textbf{ J.2  } Finish explaining example } \begin{center}\includegraphics[scale=1.8]{kernel_g_notation_1} \end{center} \begin{center} 	\includegraphics[scale=1.6]{kernel_g_notation_2} \end{center}\end{exa}

%\begin{nota}Given a partition $\pi \in \mathcal{P}(m)$ and a tuple $\bm{i}:[m] \to [N]$ ($\bm{i} \in [N]^m$) with $\ker(\bm{i}) = \pi$, we let $\varphi_{\pi,\bm{i} }(a_1,\ldots,a_N)$ denote the moment $\varphi(a_{i_1}a_{i_2} \cdots a_{i_k})$. For instance, if $\pi=\{\{1,5\},\{2,3,7\},\{4\},\{6\}\}  \in P(7)$ and  $\bm{i}=(1,8,8,4,1,5,8) \in [10]^7$, then $\varphi_{\pi,\bm{i} }(a_1,\ldots,a_{10})$ denotes $\varphi(a_1 a_8 a_8 a_4 a_1 a_5 a_8)${\color{red} quitar}\end{nota}

%\begin{defn}Digraphs $G_\pi$ and $G_{\pi({\bm i})}$ \end{defn}

We have given a sufficient but not necessary condition for $\ker_G[\bm{i}]=\ker[\bm{i}]$ to hold in Remark \ref{rmk:kernel_g_extension}. 
This equality plays an important role in our analysis of BMT random variables, 
so it will be convenient to establish an equivalent condition to it in terms of digraphs. 
To this end, we introduce the following. 

\begin{defn}
The \emph{nesting-crossing graph} of a partition $\pi \in P(m)$ is the digraph $G_\pi=(V_\pi,E_\pi)$ with vertex set $V_\pi = \pi $ and edge set $E_\pi = \left\{ (B,C) \in \pi \times \pi :   B \neq C \text{ and either } C \text{} \right.$ $\left. \text{is nested inside } C \text{ or } B \text{ and } C \text{ cross each other} \right\}$. 
Additionally, given a function $\bm{i}: [m] \to [N]$ with $\ker[\bm{i}] = \pi$, we let $G_{\pi(\bm{i})}$ denote the graph obtained from $G_\pi$ after relabeling each vertex $W \in V_\pi$ as $i_k$ with $k \in W$. 
\end{defn}

\begin{rem}
The edge set $E_\pi$ can be equivalently defined  as the set of all ordered pairs $(B,C) \in \pi \times \pi$ with $B \neq C$ such that there exists $\ell \in B$ with $\min C < \ell < \max C$. 
The latter is also equivalent to the existence of elements $k,k' \in C$ and $\ell \in B$ with $k <\ell < k'$ . 
\end{rem}

\begin{exa}
Consider $\pi=\{ \{1,5\}, \{2,3,7\},\{4\}, \{6\} \}$ and take $B_1=\{1,5\}$, $B_2=\{2,3,7\}$, $B_3=\{4\}$, and $B_4=\{6\}$. 
The nesting-crossing graph $G_\pi$ has then vertex set $V_\pi=\{B_1,B_2,B_3,$ $B_4\}$ and edge set  $E_\pi=\{(B_1,B_2),(B_2,B_1),(B_3,B_1),(B_3,B_2),(B_4,B_2)\}$.
Moreover, the function $\bm{i}=(1,8,8,4,1,5,8)$ satisfies $\ker[\bm{i}]=\pi$, so the graph  $G_{\pi(\bm{i})}$ has vertex set $V_{\pi(\bm{i})}=\{1,4,5,8\}$ and edge set $E_{\pi(\bm{i})}=\{(1,8),(8,1),(4,1),(4,8),(5,8)\}$.
See Figure \ref{fig:partition_graph}.
\end{exa}

\begin{figure}
	\centering
	\includegraphics[scale=1.8]{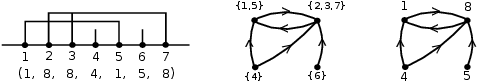}
	\caption{Graphical representation of the partition $\pi=\{ \{1,5\}, \{2,3,7\},\{4\}, \{6\} \}$, the function $\bm{i}=(1,8,8,4,1,5,8)$, the nesting-crossing graph $G_{\pi}$, and the graph $G_{\pi(\bm{i})}$.}
	\label{fig:partition_graph}
\end{figure}

%{\color{red}Octavio: Creo que el siguiente lemma es fundamental para todas las cotas sint\'oticas (equivalente a \ref{prop_evaluation_nest_cross}).	\begin{lem}	Let $G$ graph on $[N]=\{1,...N\}$and $\bm{i}=(i_1,\dots,i_n)\in [N]^k$, then $$\ker_G[i]=\ker[i] \iff  _{i}\subset G_{\ker[i]}$$\end{lem}}

\begin{lem}
 \label{kertopi}
Let $\bm{i}:[m] \to V$ be a function with $\ker[\bm{i}] = \pi$ for some partition $\pi \in P(m)$. 
Then for any digraph $G=(V,E)$ the equality $\ker_G[\bm{i}]=\ker[\bm{i}]$ holds if and only if $G_{\pi(\bm{i})}$ is a subgraph of $G$. 
\end{lem}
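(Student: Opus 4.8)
The plan is to unwind both sides of the claimed equivalence directly from the definitions of $\ker_G[\bm i]$ and of the nesting-crossing graph. Recall that since $\ker[\bm i]=\pi$, we always have $\ker_G[\bm i]\le\pi$ (Remark \ref{rmk:kernel_g_extension}), so the equality $\ker_G[\bm i]=\ker[\bm i]$ holds if and only if $\pi\le\ker_G[\bm i]$, i.e.\ if and only if every pair $k,k'$ lying in the same block of $\pi$ is also related by $\sim$ in the defining equivalence of $\ker_G[\bm i]$. Since $i_k=i_{k'}$ automatically for such a pair, the only thing that can fail is the second condition: there must be no ``witness'' $\ell$ with $k<\ell<k'$ (or $k'<\ell<k$), $i_\ell\ne i_k$, and $(i_\ell,i_k)\notin E$. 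So I would first record the clean reformulation:
\[
\ker_G[\bm i]=\ker[\bm i]\iff \text{for every block }B\in\pi\text{ and all }k,k'\in B,\ \ell\in[m]\text{ with }k<\ell<k'\text{ and }i_\ell\ne i_k,\ (i_\ell,i_k)\in E.
\]

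Next I would translate the right-hand side of the lemma, ``$G_{\pi(\bm i)}$ is a subgraph of $G$,'' into the same language. By the Remark following the definition of the nesting-crossing graph, $(C,B)\in E_\pi$ precisely when there exist $k,k'\in B$ and $\ell\in C$ with $k<\ell<k'$; after the relabeling $W\mapsto i_k$ that produces $G_{\pi(\bm i)}$, this says $(i_\ell,i_k)$ is an edge of $G_{\pi(\bm i)}$ exactly when $i_\ell\ne i_k$ (distinct blocks give distinct values of $\bm i$, since $\ker[\bm i]=\pi$) and there exist $k,k'$ in the block of $\bm i^{-1}(i_k)$ and $\ell$ in the block of $\bm i^{-1}(i_\ell)$ with $k<\ell<k'$. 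Since $V_{\pi(\bm i)}$ consists of the values taken by $\bm i$, which are vertices of $G$ as well, ``$G_{\pi(\bm i)}\subset G$'' reduces to the edge inclusion $E_{\pi(\bm i)}\subset E$, i.e.: whenever there are $k,k'$ in one $\pi$-block, $\ell$ in another, with $k<\ell<k'$, the pair $(i_\ell,i_k)$ lies in $E$.

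Comparing the two reformulations, both say exactly: for every choice of a $\pi$-block $B$, elements $k<k'$ in $B$, and an index $\ell$ strictly between them with $i_\ell\ne i_k$, one has $(i_\ell,i_k)\in E$. (In the kernel reformulation the case $k'<\ell<k$ also appears, but that is the same statement after swapping the roles of $k$ and $k'$, both of which lie in $B$.) Hence the two conditions are literally identical, and the lemma follows. I would present this as: ($\Rightarrow$) if $\ker_G[\bm i]=\ker[\bm i]$, take any edge $(i_\ell,i_k)\in E_{\pi(\bm i)}$, produce the witnessing $k,k',\ell$, note $k\sim_{\ker_G[\bm i]}k'$ forces $(i_\ell,i_k)\in E$, so $E_{\pi(\bm i)}\subset E$; ($\Leftarrow$) conversely, if $G_{\pi(\bm i)}\subset G$ and $k,k'$ lie in a common block of $\pi$, any intermediate $\ell$ with $i_\ell\ne i_k$ gives an edge of $G_{\pi(\bm i)}$, hence of $G$, so the second condition in Definition \ref{kerG} is met and $k\sim_{\ker_G[\bm i]}k'$, whence $\pi\le\ker_G[\bm i]\le\pi$.

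The only real point requiring care — and the step I would flag as the main obstacle — is bookkeeping the relabeling $W\mapsto i_k$ correctly: one must check it is well-defined (any two $k\in W$ give the same $i_k$ because $W$ is a block of $\ker[\bm i]=\pi$) and that it is injective on $V_\pi$ (distinct blocks map to distinct values, again because $\ker[\bm i]=\pi$ exactly), so that edges of $G_\pi$ correspond bijectively to edges of $G_{\pi(\bm i)}$ and the notion ``$G_{\pi(\bm i)}\subset G$'' is the plain edge-set inclusion described above. Once that correspondence is in hand, everything else is a direct unravelling of definitions with no estimates or combinatorial subtleties.
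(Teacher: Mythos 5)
Your proof is correct and follows essentially the same route as the paper: the paper also reduces the kernel equality to the condition that $(i_\ell,i_k)\in E$ whenever $k<\ell<k'$, $i_k\ne i_\ell$, $i_k=i_{k'}$, and then identifies that set of pairs with $E_{\pi(\bm i)}$ (its Proposition \ref{prop_evaluation_nest_cross}, proved from the nesting/crossing definitions, whereas you invoke the equivalent characterization in the remark following the definition of $G_\pi$). Your extra care about well-definedness and injectivity of the relabeling $W\mapsto i_k$ is a point the paper leaves implicit, but there is no substantive difference in the argument.
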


Since the vertex set of $G_{\pi(\bm{i})}$ is always a subset of the vertex set of $G$, and the definition of  $\ker_G[\bm{i}]$ and $\ker[\bm{i}]$ implies that the equality $\ker_G[\bm{i}]=\ker[\bm{i}]$ holds if and only if $(i_\ell,i_k)$ is an edge of $G$ whenever $k < \ell < k'$, $i_k \neq i_\ell$, and $i_k = i_{k'}$, the previous lemma is an immediate consequence of the next proposition.

\begin{prop}\label{prop_evaluation_nest_cross}
Suppose $\pi$ is a partition in $P(m)$.
Then, for any function $\bm{i}: [m] \to [N]$ with $\ker[\bm{i}] = \pi$, the edge set $E_{\pi(\bm{i})}$ of the graph $G_{\pi({\bm i})}$ satisfies
\[ E_{\pi(\bm{i})} = \left\{ (i_\ell,i_k)  \mid  \ i_k \neq i_\ell \text{ and there exists } k' \in [m] \text{ with }k < \ell < k', \ i_k = i_{k'} 	\right\}. \]
%\[ E_{\pi(\bm{i})} = \left\{ (i_\ell,i_k) \ \Big\vert \begin{array}{c}i_k \neq i_\ell \text{ and there exists } k' \in [m] \text{ such that } k \text{ and } k' \\  \text{ belong to the same block of } \pi \text{ and } k < \ell < k' \end{array}  \right\} .\]
\end{prop}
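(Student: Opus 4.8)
The plan is to prove the claimed set equality by double inclusion, working directly from the definitions of $G_\pi$, $E_\pi$, and the relabeling that produces $G_{\pi(\bm{i})}$. Fix $\bm{i}:[m]\to[N]$ with $\ker[\bm{i}]=\pi$. By definition, $G_{\pi(\bm{i})}$ is obtained from $G_\pi$ by renaming each block $W\in\pi$ to the common value $i_k$ for $k\in W$; since $\ker[\bm{i}]=\pi$, this renaming is a well-defined bijection from $\pi$ onto the image $\bm{i}([m])=[N]$ (or the relevant subset), so $(v,w)\in E_{\pi(\bm{i})}$ if and only if $v=i_a$, $w=i_b$ for the blocks $B\ni a$, $C\ni b$ with $(B,C)\in E_\pi$. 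Using the Remark following the definition of the nesting-crossing graph, $(B,C)\in E_\pi$ precisely when $B\neq C$ and there exist $k,k'\in C$ and $\ell\in B$ with $k<\ell<k'$.

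For the inclusion $\supseteq$, suppose $i_k\neq i_\ell$ and there is $k'\in[m]$ with $k<\ell<k'$ and $i_k=i_{k'}$. Let $C$ be the block of $\pi$ containing $k$; then $k'\in C$ as well since $i_k=i_{k'}$, and let $B$ be the block containing $\ell$. Because $i_k\neq i_\ell$ we have $B\neq C$, and $k<\ell<k'$ with $k,k'\in C$, $\ell\in B$ witnesses $(B,C)\in E_\pi$ via the Remark's characterization. Relabeling, $(i_\ell,i_k)\in E_{\pi(\bm{i})}$. For the inclusion $\subseteq$, take $(v,w)\in E_{\pi(\bm{i})}$; pull back to blocks $B,C\in\pi$ with $(B,C)\in E_\pi$, $v=i_\ell$ for any $\ell\in B$, $w=i_k$ for any $k\in C$. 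The Remark gives $k_1,k_2\in C$ and $\ell_0\in B$ with $k_1<\ell_0<k_2$; here $i_{k_1}=i_{k_2}=w$ (same block $C$) and $i_{\ell_0}=v\neq w$ (since $B\neq C$ means $B,C$ have distinct $\bm{i}$-values, as $\ker[\bm{i}]=\pi$). So with $k:=k_1$, $\ell:=\ell_0$, $k':=k_2$ we have exactly the membership condition of the right-hand set, and $(i_\ell,i_k)=(v,w)$.

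The one point requiring a little care is the passage between "the block $B$" and "an arbitrary representative $\ell\in B$": the right-hand side is phrased in terms of specific indices $k,\ell,k'$, while $E_\pi$ is a set of pairs of blocks, so I should be explicit that the value $i_\ell$ depends only on the block of $\ell$ (immediate from $\ker[\bm{i}]=\pi$), and likewise that $B\neq C$ is equivalent to $i_\ell\neq i_k$ for representatives. I expect the main (very mild) obstacle to be bookkeeping the direction of the edge: the nesting/crossing definition and its Remark produce the ordered pair $(B,C)$ with the "witness" index $\ell$ lying inside the block $C$'s span, and after relabeling this must come out as $(i_\ell,i_k)$ rather than $(i_k,i_\ell)$ — matching the statement's convention that the first coordinate is $i_\ell$ (the "outer"/nesting block) and the second is $i_k$ (the block being crossed into). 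Once the direction is pinned down, both inclusions are essentially a translation of definitions, and no further estimates are needed.
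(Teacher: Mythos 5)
Your proof is correct and follows essentially the same route as the paper's: both argue by double inclusion, passing between blocks of $\pi$ and representative indices via the relabeling that defines $G_{\pi(\bm{i})}$, with the injectivity of that relabeling guaranteed by $\ker[\bm{i}]=\pi$. The only cosmetic difference is that you invoke the Remark's characterization of $E_\pi$ (existence of $k,k'\in C$ and $\ell\in B$ with $k<\ell<k'$) up front, whereas the paper splits into the nested and crossing cases and exhibits the witnesses in each case separately.
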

\begin{proof}
Put $\widetilde{E} = \left\{ (i_\ell,i_k)  \mid  \ i_k \neq i_\ell \text{ and there exists } k' \in [m] \text{ with }k < \ell < k', \ i_k = i_{k'} 	\right\} $. 
Suppose first $(i_\ell,i_k)$ is an edge in $E_{\pi({\bm i})}$ and let $C$ and $B$ be the blocks of $\pi$ with $ \ell \in C $ and $k \in B$. 
Since $\ker[\bm{i}] = \pi$ and $(i_\ell,i_k) \in E_{\pi({\bm i})}$, we must have $i_k \neq i_\ell$. 
Moreover, the pair $(C,B)$ must be an edge in $E_\pi$, and hence either $C$ is nested inside $B$ or $C$ and $B$ cross each other. 
Now, if $C$ is nested inside $B$, then we can take $k' = \min B$ and $k''= \max B$ to get  $(i_\ell,i_k) = (i_\ell,i_{k'})$,  $i_{k'} \neq i_\ell$, $k' < \ell < k''$, and $i_k = i_{k'}$. 
On the other hand, if $C$ and $B$ cross each other, there must exist $k',k'' \in B$ and $\ell \in C$ with $k' < \ell < k''$; additionally, we get $(i_\ell,i_k) = (i_\ell,i_{k'})$, $i_{k'} \neq i_\ell$, and $i_k = i_{k'}$. 
In any case, we obtain that $(i_\ell,i_k)$ belongs to $\widetilde{E}$. 

Suppose now $(i_\ell,i_k)$ belongs to $\widetilde{E}$ and let $k' \in [m]$ with $k < \ell < k'$ and $i_k = i_{k'}$. 
Take $C$ and $B$ as above. 
Since $\ker[\bm{i}] = \pi$, we have $k'$ belongs to $B$; moreover, $i_k \neq i_\ell$ implies the blocks  $B$ and $C$ are distinct. 
And hence,  either $C$ is nested inside $B$ or $C$ and $B$ cross each other due to $k < \ell < k'$.  
In any case, we obtain $(C,B)$ is an edge in $E_\pi$, and therefore $(i_\ell,i_k)$ belongs to $E_{\pi(\bm{i})}$ since $i_\ell$ and $i_k$ are the replacements of $C$ and $B$, respectively, in the construction of $G_{\pi(\bm{i})}$ from $G_\pi$. 
\end{proof}

%\subsection{ Paritions as }

%{\color{red}  texto en lugar de Remark,poner ejemplos de ker y keg} 

%After the above notations we are ready to present our main definition. 

\section{BMT Independence}
This is one of the main sections of the paper. 
We first recall some notions from non-commutative probability, in particular, the notions of boolean, monotone, and tensor independence. 
We then introduce the notion of BMT independence, which provides a framework that allows for arbitrary mixtures of boolean, monotone, and tensor independence and, consequently, generalizing the notion of BM independence. 
%
%We finish this section with  .... 
%
%
%
%
%

\subsection{Notions of Independence}
\begin{defn}
	A \emph{non-commutative probability space} is a pair $(\AA,\varphi)$ where $\AA$ is a complex algebra with multiplicative identity $\mathbf{1}_\AA$ and $\varphi:\AA\to\mathbb{C}$ is a linear functional so that $\varphi(\mathbf{1}_\AA)=1$. 
    The elements of $\mathcal{A}$ are called \emph{random variables}. 
\end{defn}
%
%
%
%
%

%Let $(\AA,\varphi)$ be a non-commutative probability space. 
%
Suppose $a \in \AA$ is a random variable in a non-commutative probability space $(\AA,\varphi)$. 
The value $\varphi(a^k)$ is called the \emph{k-th moment} of $a$. %for each integer $k \geq 1$. 
Moreover, the sequence $(\varphi(a^k))_{k=1}^\infty$ of all moments of $a$ is called the \emph{(algebraic) distribution} of $a$. 
We say that a probability measure $\mu$ on the real line $\mathbb{R}$ is the   \emph{analytical distribution} of $a$ if for every integer $k\geq 1$ we have 
\[
\varphi(a^k) = \int_{- \infty}^{+\infty} t^k d\mu(t). 
\]
\begin{defn}
    Let $(\AA,\varphi)$ be a non-commutative probability space. 
    An infinite sequence $a_1, a_2, a_3 \ldots $ contained in $\AA$ is said to \emph{converge in moments} to a random variable $a \in \AA$ (resp., $\mu$) if for every integer $k\geq 1$ we have 
    \[
    \lim_{N \to \infty} \varphi(a^k_N) = \varphi(a^k) 
    \quad \left(\text{resp., } =\int_{- \infty}^{+\infty} t^k d\mu(t) \right)  .  
    \]
\end{defn}

For any sequence of random variables  $a_1,a_2,\ldots,a_m \in \mathcal{A}$  and any set $B = \{k_1 < k_2 < \cdots < k_r \} \subset [m] $, we let
\[
(a_k)|_B = a_{k_1} a_{k_2} \cdots a_{k_r} = \prod_{k \in B}^{\rightarrow} a_k .
\]
Suppose $(\mathcal{A}_i)_{i \in I}$ is a family of sub-algebras of $\AA$ and $a_1 \in \AA_{i_1},\ldots, a_m\in \AA_{i_m}$ for some indexes $i_1,\ldots,i_m \in I$. 
We say $a_1 a_2 \cdots a_m$ is \emph{an alternating product of elements of} $(\mathcal{A}_i)_{i \in I}$ if the indexes $i_k$ satisfy that $i_1 \neq i_2$, $i_2 \neq i_3$, $\ldots$, $i_{m-1} \neq i_m$. 

\begin{defn}\label{dfn:boolean_tensor_monotone}
	Let $(\AA,\varphi)$ be a non-commutative probability space. A family $(\mathcal{A}_i)_{i \in I}$  of subalgebras of $\mathcal{A}$ is said to be   
	\begin{enumerate}
		\item[(\textbf{B})] \emph{boolean independent} if for any alternating product $a_1 \cdots a_m$ of elements of  $(\mathcal{A}_i)_{i \in I}$ we have
		\[\varphi(a_1\cdots a_m)=\varphi(a_1)\cdots \varphi(a_m) \] 
		\item[(\textbf{M})] \emph{monotone independent} if $I$ has a linear order $<$ and for any alternating product $a_1 \cdots a_m$ of elements of  $(\mathcal{A}_i)_{i \in I}$ with $a_j \in \mathcal{A}_{i_j}$ we have 
		\begin{enumerate}[M.1]
			\item $\varphi(a_1 \cdots a_m) = \varphi(a_k) \varphi(a_1 \cdots a_{k-1}a_{k+1}\cdots a_n)$ if $ i_{k-1} < i_k >  i_{k+1}$ for some $k \in [2,m-1]$  \ \\ 
			\item $\varphi(a_1 \cdots a_m) = \prod_{\ell=1}^{m}\varphi(a_\ell)$ if $ i_1 > \cdots > i_{k-1} > i_k < i_{k+1} < \cdots < i_m$ for some $k \in [m]$. 
		\end{enumerate}

		\item[(\textbf{T})] \emph{tensor independent} if for any (not necessarily alternating) product $a_1 \cdots a_m$ of elements of  $(\mathcal{A}_i)_{i \in I}$ with $a_j \in \mathcal{A}_{i_j}$ we have 
		\[
		\varphi(a_1\cdots a_m) \ \ = \prod_{B \in \ker({\bm i})} \varphi \left( \prod_{k \in B}^{\to} a_k\right) 
		\]
		where $\prod_{k \in B}^{\rightarrow} a_k := a_{k_1} a_{k_2} \cdots a_{k_r}$ provided $B = \{k_1 < k_2 < \cdots < k_r \}$. 
	\end{enumerate}
\end{defn}

\subsection{BMT independence}
We now present the main definition of this paper, the notion of BMT independence, which relies upon $\ker_G[\bm{i}]$, the kernel of a function subordinated to a digraph $G$, see Definition \ref{kerG}. 
\begin{defn}\label{def: BMT independence}
	Let $(\AA,\varphi)$ be a non-commutative probability space. 
	Suppose $(\mathcal{A}_i)_{i \in I}$ is a family of sub-algebras of $\AA$ and $G=(I,E)$ is a digraph on the set of indices $I$.
	The family $(\mathcal{A}_i)_{i \in I}$ is said to be \emph{BMT independent} with respect to the pair $(\varphi,G)$  if 
	for every integer $m \geq 1$ and variables $a_1 \in \mathcal{A}_{i_1}, a_2 \in \mathcal{A}_{i_2}, \ldots, a_m \in \mathcal{A}_{i_m}$ we have
	\[
	\varphi( a_1 a_2 \cdots a_m ) 
	\ \ =  \prod_{B \in \ker_G[\bm{i}]} 
	\varphi[(a_k)|_B].
	\]
\end{defn}

The above digraph $G=(I,E)$ gives the pair-wise independence relations between sub-algebras from boolean, mononote, and tensor. 
For any two distinct sub-algebras $\AA_i$ and $\AA_j$, we have that 
$\AA_i$ and $\AA_j$ are boolean independent if neither $(i,j)$ nor $(j,i)$ is an edge of $G$, 
$\AA_i$ is monotone independent from $\AA_j$ if $(i,j)$ is an edge of $G$ but $(j,i)$ is not, and 
$\AA_i$ and $\AA_j$ are tensor independent if both $(i,j)$ and $(j,i)$ are edges of $G$. 
Thus, our notion of BMT independence has boolean, mononote, and tensor independence as particular cases. 
This is proved next. 

\begin{prop}\label{prop_BMT_generalizes}
	Let $(\mathcal{A},\varphi)$ be a non-commutative probability space. 
	Suppose $(\mathcal{A}_i)_{i \in I}$ is a family of sub-algebras BMT independent with respect to a digraph $G = (I,E)$. 
	Thus, we have
	\begin{enumerate}[(i)]
		\item the algebras $(\mathcal{A}_i)_{ \in I}$ are tensor independent if $G$ is the graph complete, 
		\item the algebras $(\mathcal{A}_i)_{i \in I}$ are boolean independent if $G$ is the empty graph, and   
		\item the algebras $(\mathcal{A}_i)_{i \in I}$ are monotone independent if $I$ has a total order $<$ and $G$ is the digraph of $<$, i.e., $(i,j)$ is an edge of $G$ if and only if $j < i$. 
	\end{enumerate}
\end{prop}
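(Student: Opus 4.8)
The plan is to verify each of the three cases by unwinding the definition of BMT independence and showing that, in that case, the partition $\ker_G[\bm{i}]$ coincides with whatever partition is used in the corresponding classical notion of independence from Definition \ref{dfn:boolean_tensor_monotone}. Throughout, fix $m \geq 1$ and variables $a_1 \in \mathcal{A}_{i_1},\ldots,a_m \in \mathcal{A}_{i_m}$, write $\bm{i} = (i_1,\ldots,i_m)$, and recall that Definition \ref{def: BMT independence} gives $\varphi(a_1\cdots a_m) = \prod_{B \in \ker_G[\bm{i}]} \varphi[(a_k)|_B]$.

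For case (i), when $G$ is complete, Remark \ref{rmk:kernel_g_extension} already records that $\ker_G[\bm{i}] = \ker[\bm{i}]$, since $G_{\bm{i}}$ is then complete and the second condition in Definition \ref{kerG} is vacuous. Hence the BMT moment formula reads $\varphi(a_1\cdots a_m) = \prod_{B \in \ker[\bm{i}]} \varphi[(a_k)|_B]$, which is exactly condition (\textbf{T}) of Definition \ref{dfn:boolean_tensor_monotone}; so $(\mathcal{A}_i)_{i\in I}$ is tensor independent. For case (ii), when $G$ is empty, I would argue that $\ker_G[\bm{i}]$ is the partition of $[m]$ into singletons whenever $a_1\cdots a_m$ is an alternating product: indeed, if $k < k'$ and $i_k = i_{k'}$, then because the product is alternating there is some $\ell$ with $k < \ell < k'$ and $i_\ell \neq i_k$ (take $\ell = k+1$ if $k+1 < k'$; if $k' = k+1$ this cannot happen as $i_k \neq i_{k+1}$), and since $G$ has no edges $(i_\ell, i_k)$ is not an edge, so $k \not\sim k'$. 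Thus for alternating products $\ker_G[\bm{i}]$ is the all-singletons partition and the BMT formula becomes $\varphi(a_1\cdots a_m) = \varphi(a_1)\cdots\varphi(a_m)$, which is condition (\textbf{B}); hence boolean independence.

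Case (iii) is the one I expect to require the most care. Here $I$ carries a total order $<$ and $(i,j) \in E$ iff $j < i$. I would take an alternating product $a_1\cdots a_m$ with $a_j \in \mathcal{A}_{i_j}$ and show the BMT formula reduces to the monotone rules M.1 and M.2. The cleanest route is probably induction on $m$. First identify a local minimum: since the $i_j$ are not necessarily distinct but consecutive ones differ, pick an index $k$ that attains $\min_j i_j$; because of alternation one checks $i_{k-1} > i_k$ and $i_{k+1} > i_k$ when $k$ is interior. Then analyze the block of $\ker_G[\bm{i}]$ containing $k$: any $k'$ with $i_{k'} = i_k$ lies in the same block iff every intermediate index $\ell$ with $i_\ell \neq i_k$ has $(i_\ell, i_k) \in E$, i.e. $i_k < i_\ell$, which holds automatically by minimality of $i_k$; so all occurrences of the value $i_k$ are merged into one block. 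More importantly, if $k$ is an interior local minimum isolated in value (the generic case for the recursion), its block is $\{k\}$ and no other block of $\ker_G[\bm{i}]$ is affected by removing position $k$ — the key point being that for positions $p < k < q$ with $i_p = i_q \neq i_k$, whether they are joined does not depend on position $k$ since $i_k$ being a strict minimum forces $i_k \neq i_p$ and the edge condition at $k$, namely $(i_k, i_p) \in E$ i.e. $i_p < i_k$, fails, so $k$ never obstructs $p \sim q$. This lets me peel off $\varphi(a_k)$ as a factor and recurse, matching M.1; iterating down to the case where the index sequence is a "valley" (decreasing then increasing) gives the full product, matching M.2.

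The main obstacle, and the step I would write out most carefully, is the bookkeeping in case (iii) showing that $\ker_G[\bm{i}]$ restricted appropriately is compatible with deleting a local-minimum position — in particular handling repeated values and the boundary cases $k=1$ or $k=m$ — and confirming that the recursion terminates exactly at the monotone "valley" configuration of M.2 rather than some other base case. Cases (i) and (ii) should be essentially immediate once the alternating-product observation in (ii) is stated precisely.
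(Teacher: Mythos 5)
Parts (i) and (ii) of your proposal are correct and essentially identical to the paper's proof: (i) is immediate from Remark \ref{rmk:kernel_g_extension}, and your observation that alternation forces $\ker_G[\bm{i}]$ to consist of singletons when $G$ is empty is exactly the paper's argument for (ii).

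Part (iii), however, contains a genuine error: you peel off a local \emph{minimum} of the index sequence, whereas condition M.1 of Definition \ref{dfn:boolean_tensor_monotone} is an identity that must be verified at every local \emph{maximum} ($i_{k-1}<i_k>i_{k+1}$), and the two are not interchangeable. With $E=\{(i,j): j<i\}$, the relation $k\sim_{\ker_G[\bm{i}]}k'$ requires $(i_\ell,i_k)\in E$, i.e.\ $i_k<i_\ell$, for every intermediate $\ell$ with $i_\ell\neq i_k$. Your "key point" — that for $p<k<q$ with $i_p=i_q\neq i_k$ and $i_k$ minimal, the failure of $(i_k,i_p)\in E$ means $k$ "never obstructs $p\sim q$" — is exactly backwards: the failure of that edge condition is precisely what \emph{severs} $p$ from $q$. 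So a minimum position always breaks every block straddling it, and removing it changes the remaining blocks; the factorization $\varphi(a_1\cdots a_m)=\varphi(a_k)\varphi(a_1\cdots a_{k-1}a_{k+1}\cdots a_m)$ is false at a minimum. Concretely, for $\bm{i}=(2,1,2)$ with $1<2$, one has $\ker_G[\bm{i}]=\{\{1\},\{2\},\{3\}\}$ and hence $\varphi(a_1a_2a_3)=\varphi(a_1)\varphi(a_2)\varphi(a_3)$ (consistent with M.2), whereas your recursion at the minimum would produce $\varphi(a_2)\varphi(a_1a_3)$ with $a_1,a_3$ in the same algebra — wrong in general. The recursion also cannot terminate at the "valley" configuration of M.2, since it deletes the bottom of the valley first. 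The paper's proof instead fixes a local maximum $k$ (so that $\{k\}$ is forced to be a singleton of $\ker_G[\bm{i}]$, since $(i_{k\pm1},i_k)\notin E$), and then carries out the bookkeeping you anticipated — showing $r\sim_{\ker_G[\bm{i}']}r'$ implies $r\sim_{\ker_G[\bm{i}]}r'$ by checking that $i_r<i_k$ via $i_r\leq i_{k-1}<i_k$ — before concluding with M.2 by showing the valley configuration yields the all-singletons kernel.
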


\begin{proof}
	We will prove \emph{(i)}, \emph{(ii)}, and \emph{(iii)} separately. 
	To this end, let us assume $a_1 \cdots a_m$ is a (not necessarily alternating) product of elements of  $(\mathcal{A}_i)_{i \in I}$ with $a_k \in \mathcal{A}_{i_k}$. 

	\noindent \textbf{Proof of \emph{(i)} .} 
	If $G$ is the complete graph, then $\ker_G[\bm{i}]=\ker[\bm{i}]$ for any $\bm{i} = (i_1, \ldots,i_m)$, see Remark \ref{rmk:kernel_g_extension}. 
	Thus, by Definitions \ref{dfn:boolean_tensor_monotone} and \ref{def: BMT independence}, the algebras $(\mathcal{A}_i)_{ \in I}$ are tensor independent since  
	\[ 
	\varphi(a_1\cdots a_m)
	\ \  = 
	\prod_{B \in \ker_G[\bm{i}]} \varphi((a_k)|_B)		
	\ \  = \ \
	 \prod_{B \in \ker[\bm{i}]} \varphi((a_k)|_B) . 
	\] 

	\noindent \textbf{Proof of \emph{(ii)} .}
	Suppose now $G$ is the empty graph. 
	Note that if $a_1 \cdots a_m$ is an alternating product, then $\ker_G[\bm{i}]=\{\{1\},\{2\},\dots,\{m\}\}$. 
	Indeed, $a_1 \cdots a_m$ is alternating only if $i_1 \neq i_2$, $i_2 \neq i_3$, $\ldots$, $i_{m-1} \neq i_m$, and hence $i_k = i_{k'}$ only if $k=k'$ or $k+1 \leq k'$. 
	But then, since $G$ contains no edges, the second condition defining $\ker_G[\bm{i}]$ is never satisfied when $k+1 \leq k'$, see Definition \ref{kerG}. 
	And therefore, $\ker_G[\bm{i}]$ contains only singletons $\{k\}$ as equivalence classes. 
	Thus, Definitions \ref{dfn:boolean_tensor_monotone} and \ref{def: BMT independence} imply the algebras $(\mathcal{A}_i)_{ \in I}$ are boolean independent since  
	\[ 
	\varphi(a_1\cdots a_m)
	\ \  = 
	\prod_{B \in \ker_G[\bm{i}]} \varphi((a_k)|_B)		
	\ \  = \ \
	\varphi(a_1)\varphi(a_2)\cdots \varphi(a_m). 
	\] 

	\noindent \textbf{Proof of \emph{(iii)} .} 
	Suppose that $\leq$ is a total order of the set $I$ with associated digraph $G=(I,E)$ where $E = \{ (i,j) \in I \times I \mid j < i \}$. 
	Assume $a_1 \cdots a_m$ is an alternating product. 
	We will show that M.1 and M.2 from Definition \ref{dfn:boolean_tensor_monotone} hold. 

	For M.1, we have $ i_{k-1} < i_k >  i_{k+1}$ for some $k \in [2,m-1]$. 
	Then, neither $(i_{k-1},i_k)$ nor $(i_{k+1},i_k)$ is an edge of $G$, and hence $\{k\}$ is a singleton in $\ker_G[\bm{i}]$, see Definition \ref{kerG}. 
	Thus, by BMT independence, we obtain 
	\[
	\varphi( a_1 a_2 \cdots a_m ) 
%	\ \ =  	\prod_{B \in \ker_G[\bm{i}]} \varphi((a_{\ell})|_B)
	\ \ =  \ \ 
			\varphi(a_{k})  
			\prod_{\substack{B \in \ker_G[\bm{i}]\\ B \neq\{k\}}} 
			\varphi((a_{\ell})|_B) .
	\]
	Let $\bm{i}' = (i_1,\ldots,i_{k-1},i_{k+1},\ldots,i_m)$. 
	It is enough to prove that $B \in \ker_G[\bm{i}]$ and $B \neq\{k\}$ if and only if $B \in \ker_G[\bm{i}']$ since BMT independence also gives  
	\[ 
	\varphi(a_1 \cdots a_{k-1}a_{k+1}\cdots a_m)
	\ \ = 
			\prod_{B \in \ker_G[\bm{i}']}
				\varphi((a_{\ell})|_B). 
	\]
	Take $\pi=\ker_G[\bm{i}]$ and $\theta = \ker_G[\bm{i}']$. 
	Due to Definition \ref{kerG}, we need to show that $r\sim_\pi r'$ and $r\sim_\theta r'$ are equivalent for any $r, r' \in  [m] $ with $r,r'  \neq k$. 
	However, $r\sim_\pi r'$ implies $r\sim_\theta r'$ already, so just the other implication is needed. 

	Suppose $r\sim_\theta r'$ with $r<r'$ and $r,r'  \neq k$. 
	Thus, we obtain $i_r=i_{r'}$  and $i_r < i_{\ell}$ whenever $r< \ell <r'$, $i_\ell \neq i_r$, and $\ell \neq k$. 
	To get $r\sim_\pi r'$, the restriction $\ell \neq k$ in the latter condition needs to be lifted.  
	This is immediate if $r' < k$ or $k < r$ or $i_k = i_r$. 
	Thus, without loss of generality, we can assume $r<k<r'$ and $i_k \neq i_r$. 
	Now, if $i_r = i_{k-1}$, then $i_r < i_k$ by hypothesis.  
	On the other hand, if  $i_r \neq i_{k-1}$, we must have $r < k-1 < r'$, and hence $i_r < i_{k-1} < i_k$.
	This shows the restriction $\ell \neq k$ is not needed, and hence $r\sim_\pi r'$. 

	For M.2, we have $i_1 > \cdots > i_{k-1} > i_k < i_{k+1} < \cdots < i_m$ for some $k \in [m]$.  
	Notice $\pi=\ker_G[\bm{i}]$ contains only singletons. 
	Indeed, if $r\neq r'$ and either $1 \leq r,r' \leq k$ or $k \leq r,r' \leq m$, then either $i_r < i_{r'}$ or  $i_{r} < i_{r}$, and hence $r \nsim_\pi r'$ since 	 $i_r \neq i_{r'}$. 
	Thus, for any $r,r' \in [m]$, we have that $r \sim_\pi r'$ only if $r = r'$ or $r < k < r'$ and $i_r = i_{r'}$. 
	However, the latter case implies $i_k < i_r$ and hence $r \nsim_\pi r'$. 
	Therefore, $\ker_G[\bm{i}] = \{\{1\},\{2\},\ldots,\{m\}\}$, and BMT independence gives 
	\[
	\varphi(a_1 a_2 \cdots a_m ) = \varphi(a_1) \varphi(a_2) \cdots \varphi(a_m).
	\]
\end{proof}

\subsection{Weak BM independence}
A main motivation for this paper is the notion of \emph{BM independence}, introduced and investigated by J. Wysoczanski in \cite{BM2007,BM2010} as a generalization of monotone and boolean independence. 
\begin{defn}
	
	Let $(\AA,\varphi)$ be a non-commutative probability space. 
	A family $(\mathcal{A}_i)_{i \in I}$  of subalgebras of $\mathcal{A}$ is said to be \emph{bm-independent} if the set $I$ has a partial order $\preceq$ and  for any alternating product $a_1 \cdots a_n$ of elements of  $(\mathcal{A}_i)_{i \in I}$ with $a_k \in \mathcal{A}_{i_k}$ the following two hold:
	\begin{itemize}%[leftmargin=\parindent,align=left,]
		\item[]  \begin{itemize}
		\item[\textbf{(BM1)}] if  $ i_{k-1} \prec i_k \succ i_{k-1}$ or $ i_{k+1} \nsim i_k \succ i_{k+1}$ or $ i_{k-1} \prec i_k \nsim i_{k+1}$ for some $k \in [2,m-1]$, then 
				\[a_1 \cdots a_m = \varphi(a_k) a_1 \cdots a_{k-1}a_{k+1}\cdots a_m.\] 
		\end{itemize}
		\item[]  \begin{itemize}
		\item[(\textbf{BM2})]  if $i_1\succ \cdots \succ i_k \nsim \cdots \nsim i_{k+\ell}\prec \cdots \prec i_n$ for some $k \in [m]$ and $\ell \in [0,m-k]$, then
				\[ \varphi(a_1 \cdots a_m)  = \prod_{j=1}^m \varphi(a_j). \]
		\end{itemize}
	\end{itemize}
\end{defn}

However, as we will show next, our notion of BMT independence contains a weaker version of BM independence as a particular case. 

\begin{defn} \label{weakBM}
	The family of sub-algebras $(\mathcal{A}_i)_{i \in I}$ from the previous definition is called \emph{weak bm-independent} 
	if any alternating product $a_1 \cdots a_n$ satisfies (\textbf{BM2}) and \textbf{(weak BM1)}, namely, 
	if  $ i_{k-1} \prec i_k \succ i_{k+1}$ or $ i_{k-1} \nsim i_k \succ i_{k+1}$ or $ i_{k-1} \prec i_k \nsim i_{k+1}$ for some $k \in [2,m-1]$, then	
	\[\varphi(a_1 \cdots a_n) = \varphi(a_k) \varphi(a_1 \cdots a_{k-1}a_{k+1}\cdots a_n)\]
\end{defn}

It follows from  the linearity of $\varphi$ that  BM independence implies weak BM independence.

\begin{rem}\label{rmk_digraph_partial}
	Note that if $G=(I,E)$ is the digraph of a partial order $(I,\preceq)$, then $i\prec j$ if and only if $(j,i)\in E$ and $(i,j)\notin E$, and $i\nsim j$ if and only if $(j,i)\notin E$ and $(i,j)\notin E$. 
\end{rem}

\begin{thm}\label{digraphorder} 
	If $G = (I,E)$ is the digraph of a partial order $(I,\preceq)$, then BMT independence and weak BM independence coincide. 
\end{thm}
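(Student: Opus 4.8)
The plan is to show the two defining conditions agree on alternating products. Fix an alternating product $a_1 \cdots a_m$ with $a_k \in \mathcal{A}_{i_k}$ and set $\bm{i} = (i_1,\ldots,i_m)$, $\pi = \ker_G[\bm{i}]$. Since $G$ is the digraph of a partial order $(I,\preceq)$, Remark \ref{rmk_digraph_partial} lets me translate every edge condition into a statement about $\preceq$: $(j,i)\in E \iff i \prec j$, while the non-edge cases encode either incomparability $i \nsim j$ or the reverse relation. I would first record the key structural fact: for an alternating product, two positions $k < k'$ satisfy $k \sim_\pi k'$ only if $i_k = i_{k'}$ and every intermediate index $i_\ell$ with $i_\ell \neq i_k$ satisfies $(i_\ell,i_k)\in E$, i.e.\ $i_k \prec i_\ell$. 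In particular a position $k$ is a singleton of $\pi$ precisely when it is not ``absorbed'': there is no matching later occurrence of $i_k$ through a valley dominated by $i_k$.

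Next I would run the argument in both directions. For ``weak BM $\Rightarrow$ BMT'', I proceed by induction on $m$. If $\pi$ has a singleton block $\{k\}$ with $2 \le k \le m-1$, I claim the local pattern at $k$ is one of the three patterns in \textbf{(weak BM1)}: the pair $(i_{k-1},i_k)$ is not an edge pointing the right way for absorption and likewise for $(i_{k+1},i_k)$, which by Remark \ref{rmk_digraph_partial} forces $i_{k-1} \prec i_k$ or $i_{k-1} \nsim i_k$, and similarly $i_k \succ i_{k+1}$ or $i_k \nsim i_{k+1}$; ruling out the simultaneously-incomparable-on-both-sides-with-absorption case needs a small check using the reasoning in the proof of Proposition \ref{prop_BMT_generalizes}(iii) adapted to partial orders. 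Applying \textbf{(weak BM1)} peels off $\varphi(a_k)$ and reduces to $\bm{i}' = (i_1,\ldots,i_{k-1},i_{k+1},\ldots,i_m)$, and I must check $\ker_G[\bm{i}']$ equals $\pi$ with $\{k\}$ removed — exactly the lifting-of-the-restriction argument already carried out in the proof of Proposition \ref{prop_BMT_generalizes}(iii), which goes through verbatim for partial orders. If instead every singleton of $\pi$ sits at an end ($k=1$ or $k=m$), or $\pi$ has no singleton at all, I argue that $\pi$ must be the all-singletons partition and the index sequence has the valley shape $i_1 \succ \cdots \succ i_k \nsim \cdots \nsim i_{k+\ell} \prec \cdots \prec i_m$ required by \textbf{(BM2)}; this again mirrors the M.2 computation in Proposition \ref{prop_BMT_generalizes}(iii), now using that non-edges in a partial-order digraph mean incomparability. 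For the reverse direction, ``BMT $\Rightarrow$ weak BM'', I take an alternating product in one of the three \textbf{(weak BM1)} patterns, observe directly from the definitions and Remark \ref{rmk_digraph_partial} that $\{k\}$ is then a singleton of $\ker_G[\bm{i}]$, and use the BMT product formula together with the same $\bm{i} \leftrightarrow \bm{i}'$ bijection of blocks to get $\varphi(a_1 \cdots a_m) = \varphi(a_k)\varphi(a_1 \cdots a_{k-1}a_{k+1}\cdots a_m)$; for \textbf{(BM2)} I show the valley shape forces $\ker_G[\bm{i}]$ to be all singletons, so BMT gives the full product.

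The genuinely delicate point is the three-way case analysis of the local pattern at an interior singleton of $\pi$: I need that ``$\{k\}$ is a singleton of $\ker_G[\bm{i}]$'' is equivalent to ``the pattern at $k$ is one of the three \textbf{(weak BM1)} patterns'' for alternating products. One implication is easy; for the other I must exclude the scenario where $i_{k-1} \nsim i_k \nsim i_{k+1}$ yet $k$ is still absorbed into a block via some occurrence of $i_k$ strictly before $k-1$ or after $k+1$ — but such an occurrence would require an intermediate valley index dominated by $i_k$, and following that block's structure (as in the argument of Proposition \ref{prop_evaluation_nest_cross} / the nesting-crossing description) shows it would create an edge incompatible with the assumed incomparabilities, or else make the product non-alternating. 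Once this equivalence is nailed down, both directions are a matter of bookkeeping with the block bijection between $\ker_G[\bm{i}]$ and $\ker_G[\bm{i}']$, which is already established in the paper. I would also remark at the end that, since BMT is defined on \emph{all} products while weak BM only speaks of alternating ones, the coincidence statement is understood as: a family is BMT independent w.r.t.\ the partial-order digraph if and only if it is weak bm-independent w.r.t.\ $\preceq$, the non-alternating moments being then determined by multiplicativity within algebras in both frameworks.
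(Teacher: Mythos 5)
Your overall architecture (two directions, the block bijection between $\ker_G[\bm{i}]$ and $\ker_G[\bm{i}']$, the all-singletons analysis for the valley case) is the same as the paper's, and the ``BMT $\Rightarrow$ weak BM'' direction is essentially correct. The gap is in the converse direction, where your induction is driven by the wrong dichotomy. You claim that if $\{k\}$ is an \emph{interior singleton} of $\pi=\ker_G[\bm{i}]$ then neither $(i_{k-1},i_k)$ nor $(i_{k+1},i_k)$ is an edge, hence (after ruling out one case) the local pattern at $k$ is one of the three \textbf{(weak BM1)} patterns. This is false: a position can be a singleton simply because $i_k$ occurs nowhere else in the word, with no constraint on the adjacent edges. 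Take $I=\{1,2,3\}$ with its usual total order and $\bm{i}=(3,2,1)$, so $i_1\succ i_2\succ i_3$; then $\ker_G[\bm{i}]=\{\{1\},\{2\},\{3\}\}$ and $\{2\}$ is an interior singleton, yet $(i_1,i_2)\in E$ and the pattern $\succ\!\cdot\!\succ$ is none of the three \textbf{(weak BM1)} patterns, so your first branch cannot fire at $k=2$ (nor anywhere). Your fallback branch does not catch this word either: its hypothesis is ``every singleton of $\pi$ sits at an end, or $\pi$ has no singleton'', while its conclusion is ``$\pi$ is the all-singletons partition'' --- mutually contradictory for $m\ge 3$. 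Relatedly, the ``simultaneously incomparable on both sides'' case $i_{k-1}\nsim i_k\nsim i_{k+1}$ that you propose to rule out cannot be ruled out: it is perfectly compatible with $\{k\}$ being a singleton (indeed it forces it); it is simply a position inside the $\nsim$-plateau of a \textbf{(BM2)} valley, not a \textbf{(weak BM1)} position.

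The fix is the paper's dichotomy, which is keyed to the order pattern of the index word rather than to singletons of the kernel: for any alternating $\bm{i}$, \emph{either} some $k\in[2,m-1]$ realizes one of the three \textbf{(weak BM1)} patterns (and then one proves $\ker_G[\bm{i}]=\{\{k\}\}\cup\ker_G[\bm{i}']$ by the lifting argument you cite), \emph{or} no $k$ does, in which case every local pattern is one of $\nsim\!\cdot\!\nsim$, $\nsim\!\cdot\!\prec$, $\succ\!\cdot\!\nsim$, $\prec\!\cdot\!\prec$, $\succ\!\cdot\!\succ$, and one checks directly that the whole word then has the \textbf{(BM2)} valley shape, whence $\ker_G[\bm{i}]$ consists of singletons only. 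With that replacement the induction closes; the rest of your bookkeeping, and your closing remark on reducing non-alternating moments to alternating ones, are fine.
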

\begin{proof}
Let $(I,\preceq)$ be a partial order with digraph $G = (I,E)$ where $E = \{ (i,j) \in I \times I \mid j \prec i \}$. 
Suppose $(\AA,\varphi)$ is a non-commutative probability space and $(\mathcal{A}_i)_{i \in I}$ is a family of sub-algebras of $\mathcal{A}$. 
Consider an arbitrary alternating productof $a_1\cdots a_m$ of elements of $(\AA)_{i\in I}$ with $a_k \in \AA_{i_k}$ and take $\bm{i}=(i_1,\ldots, i_m)$. 

Assume first that $(\AA)_{i\in I}$ are BMT independent with respect to $(\varphi,G)$.
Suppose $i_{k-1} \prec i_k \succ i_{k+1}$ or $ i_{k-1} \nsim i_k \succ i_{k+1}$ or $ i_{k-1} \prec i_k \nsim i_{k+1}$. 
To show that weak BM1 holds, one follows a similar argument as in \emph{(iii)} from Proposition \ref{prop_BMT_generalizes} to conclude it is enough to prove that $r\sim_\theta r'$ implies $r\sim_\pi r'$  for any $r, r' \in  [m] $ with $r,r'  \neq k$ where $\pi=\ker_G[\bm{i}]$, $\theta = \ker_G[\bm{i}']$, and $\bm{i}=(i_1,\ldots,i_{k-1},i_{k+1},\ldots,i_m)$. 

Suppose $r\sim_\theta r'$ with $r<r'$ and $r,r'  \neq k$.
Thus, we obtain $i_r=i_{r'}$ and $i_r \prec i_{\ell}$ whenever $r< \ell <r'$, $i_\ell \neq i_r$, and $\ell \neq k$, see Definition \ref{kerG} and Remark \ref{rmk_digraph_partial}. 
Similarly to \emph{(iii)} from Proposition \ref{prop_BMT_generalizes}, we just need to lift the restriction $\ell \neq k$ when $r<k<r'$ and $i_k \neq i_r$. 
Now, if $i_r = i_{k-1}$ or $i_{r'} = i_{k+1}$, then  $i_r \prec i_k$ by hypothesis. 
On the other hand, if  $i_r \neq i_{k-1}$ and $i_{r'} \neq i_{k+1}$, we must have $r < k-1 < k+1 <r'$, and hence $i_r \prec i_{k-1}$ and $i_r \prec i_{k+1}$, yielding $i_r \prec i_k$ by transitivity. 
Thus, weak BM1 is satisfied. 

Suppose $i_1\succ \cdots \succ i_k \nsim \cdots \nsim i_{k+\ell}\prec \cdots \prec i_m$. 
To show that BM2 holds, we will prove $\ker_G[\bm{i}]$ contains only singletons. 
Put $\pi=\ker_G[\bm{i}]$ and take $r,r' \in [m]$ with $r \sim_\pi r'$ and $r \leq r'$. 
Thus, we have $i_r=i_{r'}$ and $i_r \prec i_{\ell}$ whenever $r< \ell <r'$ and $i_\ell \neq i_r$. 
We will show that $r$ and $r'$ must be equal by contradiction. 
Suppose $r < r'$. 
Note that $r+2 \leq r'$ since $a_1  \cdots a_m$ is alternating. 
Now, if $r<k+l$, then $i_{r+1}\neq i_r$ and $r < r+1 < r'$ with $i_r \nprec i_{r+1}$. 
Hence, we obtain $k + \ell \leq r$. 
On the other hand, if $k < r' $, then $i_{r'-1} \neq i_r$ and $r < r'-1 < r'$ with $i_{r'-1} \nsucc  i_{r'}$, yielding $r' \leq k$ and contradicting $r < r'$. 
Therefore, we must have $r = r'$ and so $\ker_G[\bm{i}]=\{\{1\},\ldots,\{m\}\}$. 
It follows from BMT independence that 
$
\varphi(a_1 a_2 \cdots a_m ) = \varphi(a_1) \varphi(a_2) \cdots \varphi(a_m).
$
This proves $(\AA)_{i\in I}$ are weak BM independent. 

Assume now that $(\AA)_{i\in I}$ are weak BM independent. 
Notice that in the first part we actually proved the following two properties: (1) $\ker_G[\bm{i}] = \{\{k\}\} \cup \ker_G[\bm{i}']$ if $i_{k-1} \prec i_k \succ i_{k+1}$ or $ i_{k-1} \nsim i_k \succ i_{k+1}$ or $ i_{k-1} \prec i_k \nsim i_{k+1}$ for some $k \in [2,m-1]$ and (2) $\ker_G[\bm{i}]=\{\{1\},\ldots,\{m\}\}$ if 
$i_1\succ \cdots \succ i_k \nsim \cdots \nsim i_{k+\ell}\prec \cdots \prec i_m$ for some $k \in [m]$ and $\ell\in [0,m-k]$. 
One can verify straightforward that the negation of the hypothesis in (1) is that for every $k \in [2,m-1]$ one has $ i_{k-1} \nsim i_k \nsim i_{k+1}$  or $ i_{k-1} \nsim i_k \prec i_{k+1}$ or  $ i_{k-1} \succ i_k \nsim i_{k+1} $ or $ i_{k-1} \prec i_k \prec i_{k+1}$ or $ i_{k-1} \succ i_k \succ i_{k+1}$. 
It then follows that any alternating sequence $i_1,\ldots, i_m$ satisfies either (1) or (2). 
Therefore, the BMT independence of $(\AA)_{i\in I}$ with respect to $(\varphi,G)$ is obtained from applying induction on the set of all integers $n \geq 1$ such that $ \varphi(a_1 \cdots a_n )  = \prod_{B \in \ker_G[\bm{i}]} \varphi((a_k)|_B)  \text { for any alternating product } a_1 \cdots a_n$ and applying weak BM1 or BM2 accordingly. 
\end{proof}
%{\color{red} Remark: Decir sobre la construccion.}

%{\color{red} Remark: se puede definir un tipo bm-1}

\subsection{Consistency}
To conclude this section, we show that BMT independence is consistent in the following way.     
If two families of algebras are BMT independent and each element in the first family is tensor (resp., monotone, Boolean) independent from any element in the second family, 
then the whole families are tensor (resp., monotone, Boolean) independent, regardless of the independence relations within each family.

To illustrate this, let us consider algebras $\mathcal{A}_1, \mathcal{A}_2, \mathcal{A}_3$ that are BMT independent with respect to a digraph $G$ containing exactly two edges from $\mathcal{A}_1$ and $\mathcal{A}_2$ to $\mathcal{A}_3$ and possibly more edges connecting $\AA_1$ and $\AA_2$ as follows: 
\begin{center} 
	\begin{tikzpicture}
		\node (1) at (0,2.4) {$\mathcal{A}_1$};
		\node  (2) at (0,.8) {$\mathcal{A}_2$};
		\node (3) at (1.6,1.6) {$\mathcal{A}_3$};

		% arrows
		\draw[dashed]  (1) edge (2);
		\draw[->]  (1) edge (3); 
		\draw[->]  (2) edge (3) ;
		%  \node [label=below:$P_{5}$] (*) at (7,0.5) {};	
	\end{tikzpicture}
\end{center}
Thus $\mathcal{A}_1$ is monotone independent from $\mathcal{A}_3$ and $\mathcal{A}_2$ is also monotone independent from $\mathcal{A}_3$. 
One can then ask what the independence relation between $\mathcal{A}_3$ and $\langle \mathcal{A}_1, \mathcal{A}_2\rangle$, the algebra generated by $\mathcal{A}_1$ and $\mathcal{A}_2$, should be. 
Consistency for BMT independence means that $\mathcal{A}_3$ and $\langle \mathcal{A}_1, \mathcal{A}_2\rangle$ must be monotone independent no matter what the relation between $\mathcal{A}_1$ and $\mathcal{A}_2$ is.

Before we prove that BMT independence is consistent, we need the following two technical lemmas.

\begin{lem}\label{lem_weak_bm}
	%Let $(\mathcal{A},\varphi)$ be a non-commutative probability space. 
	%
	Suppose $(\mathcal{A}_i)_{i \in I}$ is a family of sub-algebras BMT independent with respect to a digraph $G = (I,E)$. 
	Let $a_1 a_2 \cdots a_m$ be a product (not necessarily alternating) of elements of  $(\mathcal{A}_i)_{i \in I}$ with $a_k \in \mathcal{A}_{i_k}$ and $\bm{i} = (i_1, i_2,\ldots,i_m)$. 
	If there exist $1 \leq \ell < \ell' \leq m$ so that $\ker_{G}[\bm{i}|_S]$ and $\ker_{G}[\bm{i}|_{S^c}]$ are contained in $\ker_G[\bm{i}]$ with $S = \{ \ell,\ell+1,\cdots,\ell'-1\}$, then 
	$$\varphi(a_1  \cdots a_m) = \varphi(a_{\ell} \cdots a_{\ell'-1}) \varphi(a_1\cdots a_{\ell-1} a_{\ell'} \cdots a_m)$$
\end{lem}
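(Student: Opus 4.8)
The statement is a factorization result: if the index tuple $\bm{i}$ splits into a contiguous middle block $S = \{\ell,\ldots,\ell'-1\}$ and its complement $S^c = \{1,\ldots,\ell-1\}\cup\{\ell',\ldots,m\}$ in such a way that neither the interior structure of $S$ nor that of $S^c$ ``sees'' the other side (i.e. $\ker_G[\bm{i}|_S]$ and $\ker_G[\bm{i}|_{S^c}]$ are already sub-partitions of $\ker_G[\bm{i}]$), then the moment factorizes accordingly. The natural approach is to compute all three moments via Definition \ref{def: BMT independence} and compare the products of block-contributions. So first I would write
\[
\varphi(a_1\cdots a_m) = \prod_{B \in \ker_G[\bm{i}]} \varphi[(a_k)|_B],
\]
and similarly $\varphi(a_\ell\cdots a_{\ell'-1}) = \prod_{B\in\ker_G[\bm{i}|_S]}\varphi[(a_k)|_B]$ and $\varphi(a_1\cdots a_{\ell-1}a_{\ell'}\cdots a_m) = \prod_{B\in\ker_G[\bm{i}|_{S^c}]}\varphi[(a_k)|_B]$, taking care with the re-indexing in the last product (the tuple $\bm{i}|_{S^c}$ is the concatenation of $(i_1,\ldots,i_{\ell-1})$ with $(i_{\ell'},\ldots,i_m)$, so a block of $\ker_G[\bm{i}|_{S^c}]$ corresponds to a subset of $S^c$ under the obvious order-preserving bijection).

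The crux is then the combinatorial claim that $\ker_G[\bm{i}]$, as a partition of $[m]$, is precisely the common refinement of the two partitions obtained by extending $\ker_G[\bm{i}|_S]$ by singletons on $S^c$ and $\ker_G[\bm{i}|_{S^c}]$ by singletons on $S$ — equivalently, that every block $B$ of $\ker_G[\bm{i}]$ lies entirely inside $S$ or entirely inside $S^c$, and that the blocks inside $S$ are exactly $\ker_G[\bm{i}|_S]$ while those inside $S^c$ are exactly $\ker_G[\bm{i}|_{S^c}]$. The hypothesis gives one direction: $\ker_G[\bm{i}|_S]\subseteq\ker_G[\bm{i}]$ means each block of $\ker_G[\bm{i}|_S]$ is contained in a block of $\ker_G[\bm{i}]$; since $S$ is a union of blocks of $\ker_G[\bm{i}|_S]$... no — I need that each block of $\ker_G[\bm{i}]$ is a union of blocks of the refinements, which follows once I show $\ker_G[\bm{i}]$ restricted to $S$ equals $\ker_G[\bm{i}|_S]$ and likewise for $S^c$. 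The ``$\leq$'' part (restriction of $\ker_G[\bm{i}]$ to $S$ refines $\ker_G[\bm{i}|_S]$) is essentially automatic from Definition \ref{kerG}, because the defining relation $k\sim k'$ for $\ker_G[\bm{i}]$ imposes conditions over all intermediate indices $k<\ell<k'$, whereas $\ker_G[\bm{i}|_S]$ only imposes them over intermediate indices inside $S$; since $S$ is contiguous, when $k,k'\in S$ the intermediate indices all lie in $S$ anyway, so the two relations coincide on $S$ — giving $\ker_G[\bm{i}|_S] = \ker_G[\bm{i}]|_S$ outright, not just refinement. For $S^c$ the same argument over intermediate indices needs care: $S^c$ is not contiguous, but if $k<k'$ both lie in $S^c$ with $k<\ell<k'$, then either $\ell\in S^c$ (fine) or $\ell\in S$; in the latter case one must argue that the edge condition $(i_\ell,i_k)\in E$ is not what $\ker_G[\bm{i}|_{S^c}]$ checks, so a priori $\ker_G[\bm{i}]|_{S^c}$ only refines $\ker_G[\bm{i}|_{S^c}]$ — and the hypothesis $\ker_G[\bm{i}|_{S^c}]\subseteq\ker_G[\bm{i}]$ supplies the reverse inclusion.

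Putting it together: the hypothesis exactly says $\ker_G[\bm{i}|_S]\le\ker_G[\bm{i}]$ and $\ker_G[\bm{i}|_{S^c}]\le\ker_G[\bm{i}]$ (reading each restricted partition as a partition of $[m]$ by adding singletons), which combined with the automatic reverse refinements forces $\ker_G[\bm{i}]|_S = \ker_G[\bm{i}|_S]$ and $\ker_G[\bm{i}]|_{S^c} = \ker_G[\bm{i}|_{S^c}]$ and also forces no block of $\ker_G[\bm{i}]$ to straddle $S$ and $S^c$ (a straddling block would be refined by singletons in both extensions, contradicting that it equals the join's block). Once the blocks of $\ker_G[\bm{i}]$ are partitioned into those inside $S$ and those inside $S^c$, the product over $B\in\ker_G[\bm{i}]$ splits as $\big(\prod_{B\subseteq S}\varphi[(a_k)|_B]\big)\big(\prod_{B\subseteq S^c}\varphi[(a_k)|_B]\big) = \varphi(a_\ell\cdots a_{\ell'-1})\cdot\varphi(a_1\cdots a_{\ell-1}a_{\ell'}\cdots a_m)$, which is the assertion. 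The main obstacle, and the one step deserving genuine care rather than a one-line dismissal, is the non-contiguity of $S^c$: one must check that the edge/intermediate-index condition in Definition \ref{kerG}, when restricted to pairs $k,k'\in S^c$, cannot be ``broken'' by intermediate indices falling into $S$ in a way that escapes the hypothesis — but this is precisely why the hypothesis is phrased as containment of both restricted kernels rather than just one, and I expect a short argument (or a direct appeal to the definitions) to close it.
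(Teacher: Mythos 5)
Your overall skeleton is the paper's: expand all three moments with Definition \ref{def: BMT independence}, show that $\ker_G[\bm{i}]$ is the disjoint union of $\ker_G[\bm{i}|_S]$ and $\ker_G[\bm{i}|_{S^c}]$, and split the product over blocks; the re-indexing remark for $S^c$ is also correct. The problem lies in how you derive the splitting of the kernel, and it traces back to how you read the hypothesis. In this paper, ``$\ker_G[\bm{i}|_S]$ is contained in $\ker_G[\bm{i}]$'' means literal containment of one set of blocks in another: every block of $\ker_G[\bm{i}|_S]$ is itself a block of $\ker_G[\bm{i}]$. This is exactly how Lemma \ref{lem_weak_bm_ver_kernel} characterizes the condition --- note that its characterization explicitly includes the clause $k_1 \nsim_{\ker_G[\bm{i}]} k_2$ for $k_1 \in S$ and $k_2 \in [m]\setminus S$, which is the no-straddling statement you are trying to prove. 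Under that reading the lemma is a two-line set-theoretic fact: the blocks of $\ker_G[\bm{i}|_S]$ and of $\ker_G[\bm{i}|_{S^c}]$ are pairwise disjoint blocks of $\ker_G[\bm{i}]$ whose union is $S \cup S^c = [m]$, a partition of $[m]$ has no room for further blocks, so $\ker_G[\bm{i}]$ is exactly their disjoint union, and the product over $B \in \ker_G[\bm{i}]$ factors as claimed. That is the paper's entire proof.

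Your final synthesis instead reads the hypothesis as refinement (``$\le$\,\dots\ reading each restricted partition as a partition of $[m]$ by adding singletons''), and under that reading the argument has a genuine gap: $\ker_G[\bm{i}|_S]\le\ker_G[\bm{i}]$ and $\ker_G[\bm{i}|_{S^c}]\le\ker_G[\bm{i}]$ do not prevent a block of $\ker_G[\bm{i}]$ from straddling $S$ and $S^c$. Your parenthetical ``a straddling block would be refined by singletons in both extensions, contradicting that it equals the join's block'' presupposes that $\ker_G[\bm{i}]$ \emph{equals} the join of the two singleton-extended partitions, which is precisely what has not been shown and is false in general. Concretely: take $m=2$, $S=\{1\}$, $i_1=i_2$. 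Then $\ker_G[\bm{i}|_S]=\{\{1\}\}$ and $\ker_G[\bm{i}|_{S^c}]=\{\{2\}\}$ both refine $\ker_G[\bm{i}]=\{\{1,2\}\}$, yet $\varphi(a_1a_2)\neq\varphi(a_1)\varphi(a_2)$ in general, so the refinement reading cannot yield the lemma; the block-containment reading correctly excludes this example, since $\{1\}$ is not a block of $\{\{1,2\}\}$. Relatedly, the ``automatic reverse refinement'' $\ker_G[\bm{i}]|_{S^c}\le\ker_G[\bm{i}|_{S^c}]$ you invoke is both unnecessary under the correct reading and not obviously true, since two points of $S^c$ could end up in one global block through a chain of relations passing through $S$. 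Once you adopt the block-containment reading of the hypothesis, all of that machinery disappears and your plan closes immediately; the real work of verifying that hypothesis in concrete situations is done elsewhere, in Lemma \ref{lem_weak_bm_ver_kernel} and Proposition \ref{prop_associativity}.
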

\begin{proof}
	Since $\ker_G[\bm{i}]$, $\ker_G[\bm{i}|_S]$, and $ \ker_G[\bm{i}|_{S^c}]$ are partitions of $[m]$, $S$, and $S^c$, respectively, and $S \cup S^{c} = [m]$, the assumption that both $\ker_G[\bm{i}|_S]$ and $ \ker_G[\bm{i}|_{S^c}]$ are subsets of $\ker_G[\bm{i}]$ implies the partition $\ker_G[\bm{i}]$ is the disjoint union of $ \ker_G[\bm{i}|_S]$ and $\ker_G[\bm{i}|_{S^c}]$. 
	Therefore, the relation $\varphi(a_1 \cdots a_m) = \varphi(a_{\ell} \cdots a_{\ell'-1}) \varphi(a_1\cdots a_{\ell-1} a_{\ell'} \cdots a_m)$ follows directly from the definition of BMT independence. 
\end{proof}

\begin{lem}\label{lem_weak_bm_ver_kernel}
	Let $G = (I,E)$ be a digraph. For any tuple $\bm{i} = (i_1,i_2,\ldots,i_m)$ with $i_k \in I$ and any subset $S \subset [m]$, we have 
	$\ker_{G}[\bm{i}|_S] \subset \ker_G[\bm{i}]$ if and only if $k_1 \sim_{ \ker_G[\bm{i}]} k_2$ for any $k_1,k_2 \in S$ with $k_1 \sim_{ \ker_G[\bm{i}|_S]} k_2$ and $k_1 \nsim_{ \ker_G[\bm{i}]} k_2$ for all $k_1 \in S$ and $k_2 \in [m]\setminus S$. 
\end{lem}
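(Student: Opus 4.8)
The statement is a biconditional, so the plan is to prove the two implications separately after first isolating the one structural fact that does the real work. That fact is a \emph{monotonicity} property of the $G$-kernel under restriction: for any $k,k' \in S$, $k \sim_{\ker_G[\bm{i}]} k'$ implies $k \sim_{\ker_G[\bm{i}|_S]} k'$. To see this, write $S = \{s_1 < \cdots < s_p\}$ and identify $\bm{i}|_S$ with $(i_{s_1},\ldots,i_{s_p})$; by Definition \ref{kerG}, the relation $k \sim_{\ker_G[\bm{i}|_S]} k'$ (say $k < k'$) demands only that $(i_\ell,i_k) \in E$ for the separators $\ell \in S$ with $k < \ell < k'$ and $i_\ell \neq i_k$, whereas $k \sim_{\ker_G[\bm{i}]} k'$ demands this for \emph{all} separators $\ell \in [m]$ in that range. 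Since $\{\ell \in S : k < \ell < k'\} \subseteq \{\ell \in [m]: k < \ell < k'\}$, the full relation implies the restricted one; equivalently, passing from $\bm{i}$ to $\bm{i}|_S$ can only merge blocks within $S$, never split them. The converse is false in general, which is precisely why condition~1 of the lemma is a nontrivial hypothesis.

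For the forward direction I would assume $\ker_G[\bm{i}|_S] \subseteq \ker_G[\bm{i}]$, meaning every block of the partition $\ker_G[\bm{i}|_S]$ of $S$ is literally a block of the partition $\ker_G[\bm{i}]$ of $[m]$. Then if $k_1,k_2 \in S$ with $k_1 \sim_{\ker_G[\bm{i}|_S]} k_2$, they share a block $B \in \ker_G[\bm{i}|_S] \subseteq \ker_G[\bm{i}]$, so $k_1 \sim_{\ker_G[\bm{i}]} k_2$. For the disjointness clause I would argue by contradiction: if $k_1 \in S$, $k_2 \in [m]\setminus S$ and $k_1 \sim_{\ker_G[\bm{i}]} k_2$, then the block $B \in \ker_G[\bm{i}|_S]$ containing $k_1$ is, by hypothesis, also the $\ker_G[\bm{i}]$-block of $k_1$, whence $k_2 \in B \subseteq S$, contradicting $k_2 \notin S$.

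For the backward direction I would assume the two conditions and fix an arbitrary block $B \in \ker_G[\bm{i}|_S]$, aiming to show $B \in \ker_G[\bm{i}]$. Condition~1 makes all elements of $B$ pairwise $\ker_G[\bm{i}]$-equivalent, so $B$ sits inside a single block $B'$ of $\ker_G[\bm{i}]$; it then suffices to prove $B' \subseteq B$. Given $k \in B'$ and a fixed $k_1 \in B$, we have $k \sim_{\ker_G[\bm{i}]} k_1$. If $k \notin S$, condition~2 (applied to $k_1 \in S$ and $k \in [m]\setminus S$) is violated, so $k \in S$; but then the monotonicity fact from the first paragraph gives $k \sim_{\ker_G[\bm{i}|_S]} k_1$, forcing $k \in B$. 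Hence $B' = B$, and since $B$ was arbitrary we conclude $\ker_G[\bm{i}|_S] \subseteq \ker_G[\bm{i}]$.

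The main, and essentially only, point requiring care is the monotonicity fact in the first paragraph: one must remember that when forming $\ker_G[\bm{i}|_S]$ the phrase ``separator strictly between $k$ and $k'$'' refers to the order induced on $S$, so the relevant set of separators genuinely shrinks. Everything after that is routine manipulation of blocks of set partitions, with no further input from the structure of $G$.
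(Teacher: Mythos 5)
Your proof is correct and follows essentially the same route as the paper's: both arguments hinge on the monotonicity observation that restricting to $S$ only removes separator conditions (so $k \sim_{\ker_G[\bm{i}]} k'$ implies $k \sim_{\ker_G[\bm{i}|_S]} k'$ for $k,k' \in S$), and both prove the nontrivial direction by fixing a block of $\ker_G[\bm{i}|_S]$, locating it inside a block of $\ker_G[\bm{i}]$ via condition~1, confining that block to $S$ via condition~2, and closing the loop with monotonicity. No gaps.
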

\begin{proof}
	Put $\pi = \ker_G[\bm{i}]$ and $\pi_S = \ker_G[\bm{i}|_S]$. 
	Suppose first $k_1 \sim_{\pi} k_2$ for any $k_1,k_2 \in S$ with  $k_1 \sim_{\pi_S} k_2$ and $k_1 \nsim_{\pi} k_2$ for $k_1 \in S$ and $k_2 \in [m]\setminus S$. 
	Let $V$ be an arbitrary block of $\pi_S$.
	Take any $k \in V$ and let $W$ be the unique block of $\pi$ such that $k \in W$. 
	We will show that $V = W$. 
	Note that $V \subset W$ since for any $k' \in V \subset S$ we have $k \sim_{\pi_S} k'$, and hence $k \sim_{\pi} k'$ by hypothesis.   
	Additionally, we have $W \subset S$ since $k \sim_\pi k''$ for any $k'' \in W$ and by hypothesis $k_1 \nsim_{\pi} k_2$ if $k_1 \in S$ and $k_2 \in [m]\setminus S$. 
	Now, for any $k'' \in W \subset S$, we have $k \sim_\pi k''$, and hence $i_{k} = i_{k''} $ and $(i_{\ell},i_{k}) \in E$ whenever $k < \ell < k''$ if $k \leq k''$, or $k'' < \ell < k$ if $k'' \leq k$, with $\ell  \in [m] \supset S$, so $k \sim_{\pi_S} k''$ and $k'' \in V$.  
	Thus, we get $V = W$. Therefore, since $V$ was arbitrary, we obtain $\ker_{G}[\bm{i}|_S] \subset \ker_G[\bm{i}]$.

	Suppose now $\ker_{G}[\bm{i}|_S] \subset \ker_G[\bm{i}]$. 
	Since  $\pi = \ker_G[\bm{i}]$ and $\pi_S = \ker_G[\bm{i}|_S]$ are the partitions of $[m]$ and $S$ determined by the equivalence relation $k_1 \sim k_2$ only if $i_{k_1} = i_{k_2} $ and $(i_{\ell_1},i_{k_1}) \in E$ whenever $k_1 < \ell_1 < k_2$ with $\ell_1 \in [m]$ and $\ell_1 \in S$, respectively, we have that if $k \nsim_{\pi_S} k'$ for some $k,k' \in S$, then $k \nsim_{\pi} k'$ due to $S$ being a subset of $[m]$.  
	Thus, it only remains to show that $k_1 \nsim_{\pi} k_2$ for any $k_1 \in S$ and $k_2 \in [m]\setminus S$. 
	Take any $k \in S$ and let $V$ the unique block of $\pi_S$  so that $k \in V$.
	By hypothesis, $V$ is also a block of $\pi$, and hence, we obtain $k \nsim_\pi k'$ for any $k' \in [m]\setminus V$, in particular, for any $k' \in [m]\setminus S$. 
\end{proof}

\begin{prop}\label{prop_associativity}
	Let $(\mathcal{A},\varphi)$ be a non-commutative probability space. 
	Suppose $(\mathcal{A}_i)_{i \in I}$ is a family of subalgebras bmt independent with respect to a digraph $G = (I,E)$. 
	If $\{ I_j : j \in J \}$ is a partition of $I$ into non-empty pairwise disjoint subsets and $\mathcal{B}_j = \mathrm{alg}( \mathcal{A}_i : i \in I_j )$, then  
	\begin{enumerate}[(i)]

		\item $(\mathcal{B}_j)_{j \in J}$ are tensor independent if $(i,i') \in E$ whenever $i \in I_{j}$ and $i' \in I_{j'}$ where $j,j' \in J$ with $j \neq j'$  
		
		\item $(\mathcal{B}_j)_{j \in J}$ are boolean independent if $(i,i') \notin E$ whenever $i \in I_{j}$ and $i' \in I_{j'}$ where $j,j' \in J$ with $j \neq j'$

		\item $(\mathcal{B}_j)_{j \in J}$ are monotone independent if $J$ has a total order $<$ so that $(i,i') \in E$ and $(i',i) \notin E$  whenever $i \in I_{j}$ and $i' \in I_{j'}$ where $j,j' \in J$ with $j' < j$

	\end{enumerate}
\end{prop}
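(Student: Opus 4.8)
The plan is to prove each of the three statements by reducing an arbitrary alternating product of elements of $(\mathcal{B}_j)_{j \in J}$ to the BMT moment formula for $(\mathcal{A}_i)_{i \in I}$, then showing that the partition $\ker_G[\bm i]$ governing that formula has exactly the block structure required by tensor, boolean, or monotone independence of the coarser family. Concretely, suppose $b_1 b_2 \cdots b_n$ is an alternating product with $b_t \in \mathcal{B}_{j_t}$; each $b_t$ is a linear combination of products of elements of the $\mathcal{A}_i$ with $i \in I_{j_t}$, so by multilinearity of $\varphi$ it suffices to treat the case where each $b_t$ is itself a product $a_{k} \cdots a_{k'}$ with all indices in $I_{j_t}$. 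Writing the whole word as $a_1 \cdots a_m$ with index tuple $\bm i = (i_1,\ldots,i_m)$, the set $[m]$ is partitioned into consecutive intervals $S_1, S_2, \ldots, S_n$, where $S_t$ carries the letters coming from $b_t$, and we know $i_r \in I_{j_t}$ for $r \in S_t$ while $j_1 \neq j_2, \ldots, j_{n-1} \neq j_n$.

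For part \emph{(ii)} (boolean): the hypothesis $(i,i')\notin E$ for $i,i'$ in different $I_j$'s means that whenever $r \in S_t$ and $r' \in S_{t'}$ with $t \neq t'$, the second condition in Definition \ref{kerG} forces $r \nsim_{\ker_G[\bm i]} r'$ as soon as any index strictly between them lies in a third part — but more simply, no block of $\ker_G[\bm i]$ can jump between two different $S_t$'s, because if $r \in S_t$, $r' \in S_{t'}$ with $r < r'$ and $i_r = i_{r'}$, then since the word $b_1\cdots b_n$ is alternating there is some $\ell$ with $r < \ell < r'$ and $i_\ell \in I_{j_s}$ for some $s$ with $I_{j_s} \neq I_{j_t}$, hence $i_\ell \neq i_r$ and $(i_\ell, i_r) \notin E$, so $r \nsim r'$. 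Thus $\ker_G[\bm i]$ refines $\{S_1,\ldots,S_n\}$, and iterating Lemma \ref{lem_weak_bm} (with Lemma \ref{lem_weak_bm_ver_kernel} verifying the hypothesis at each split) peels off the factors $\varphi(b_1)\varphi(b_2\cdots b_n) = \cdots = \varphi(b_1)\cdots\varphi(b_n)$. Part \emph{(i)} (tensor): here the hypothesis makes $G$ restricted to indices in distinct parts complete, so the second condition of Definition \ref{kerG} is automatically satisfied across parts, and one checks that $\ker_G[\bm i]$ restricted to each $S_t$ equals $\ker_G[\bm i|_{S_t}]$ while blocks may now merge across parts exactly according to $\ker[\bm i]$ on the $\mathcal{B}_j$-level; grouping the product of $\varphi((a_k)|_B)$ over blocks $B$ according to which part they lie in (they cannot straddle parts either, since equal indices lie in the same $I_j$) reproduces $\prod_{D \in \ker(\bm j)} \varphi((b_t)|_D)$, which is the tensor formula.

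Part \emph{(iii)} (monotone) is the one I expect to be the main obstacle, and I would handle it by reducing to Theorem \ref{digraphorder} together with the consistency argument sketched in the text: define a partial order on $I$ in which $i \prec i'$ whenever $i \in I_j$, $i' \in I_{j'}$ with $j < j'$, leaving elements within the same $I_j$ incomparable; the given edge hypotheses say precisely that $G$ \emph{contains} the digraph of this partial order on all cross-part pairs, but $G$ may have extra edges inside the parts. The key point is that for an alternating product $b_1 \cdots b_n$ of $\mathcal{B}_j$-elements, when we expand into $a_1 \cdots a_m$ the relevant comparisons in Definition \ref{kerG} that can link two different intervals $S_t, S_{t'}$ only ever involve cross-part edges (an index $\ell$ strictly between $r \in S_t$ and $r' \in S_{t'}$ with $i_\ell \neq i_r$ lies in some part, and if it lies in $I_{j_t}$ we can push past it within the same interval), so the behaviour of $\ker_G[\bm i]$ with respect to the interval structure is identical to what it would be if $G$ were exactly the digraph of $\preceq$. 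Thus I would verify directly that $\ker_G[\bm i]$, viewed at the level of the intervals, produces monotone independence relations M.1 and M.2 from Definition \ref{dfn:boolean_tensor_monotone} for $(\mathcal{B}_j)$: when $j_{t-1} < j_t > j_{t+1}$ the block(s) of $S_t$ are isolated from the rest (no cross-part edge points into them from both sides), giving the recursive factorization of M.1 via Lemma \ref{lem_weak_bm}, and when $j_1 > \cdots > j_k < \cdots < j_n$ every cross-interval identification is blocked, so $\ker_G[\bm i]$ refines the interval partition and M.2 follows. The careful bookkeeping of \emph{which} letters within a $b_t$ can be equal — and checking that intra-part edges never create an unwanted cross-interval block — is the technical heart; I would isolate it as a short lemma of the form "$\ker_G[\bm i] = \bigsqcup_t \ker_G[\bm i|_{S_t}]$ whenever the interval indices $j_1,\ldots,j_n$ form a valley sequence," proved by the same nesting/crossing analysis used in Proposition \ref{prop_BMT_generalizes}(iii) and Theorem \ref{digraphorder}.
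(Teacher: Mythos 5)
Your proposal follows essentially the same route as the paper's proof: expand each $b_t$ into a word over the $\mathcal{A}_i$, observe that $[m]$ splits into consecutive intervals $S_1,\dots,S_n$, show via Lemma \ref{lem_weak_bm_ver_kernel} that $\ker_G[\bm{i}]$ decomposes along these intervals (or along unions of intervals indexed by the blocks of $\ker[\bm{j}]$ in the tensor case), and invoke Lemma \ref{lem_weak_bm}, treating the monotone case by the same peak/valley dichotomy. The one step you defer --- that in the peak case $j_{t-1}<j_t>j_{t+1}$ a block of $\ker_G[\bm{i}|_{S_t^c}]$ straddling $S_t$ stays linked in the full word, which rests on the observation that a link past the last letter of $b_{t-1}$ forces the block's part to satisfy $j\le j_{t-1}<j_t$ and hence all edges from $I_{j_t}$ into that block exist --- is exactly the transitivity argument the paper carries out inline, so your plan closes.
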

\begin{proof}
	Let $b_1 b_2 \cdots b_n$ be alternating product of elements of  $(\mathcal{B}_j)_{j \in J}$ where each $b_r \in \mathcal{B}_{j_r}$ is an alternating product of elements of $(\mathcal{A}_i)_{i \in I_{j_r}}$.   
	Hence, each $b_r$ is of the form  
	\[b_r = a_{(m_0+\cdots+m_{r-1})+1} a_{(m_0+\cdots+m_{r-1})+2}   \cdots  a_{(m_0+\cdots+m_{r-1})+m_r} \]
	with $a_k \in \mathcal{A}_{i_k}$, $i_k \in I_{j_r}$, and $i_{k} \neq i_{k+1}$.   
	Note that we have assumed $m_0=0$.  
	Take $\ell_k = m_0 + m_1 + m_2 + \cdots + m_k$ and $m=\ell_n$. 
	Thus, we have %letting $m = m_1 + \cdots + m_n$, we obtain  
	$$\varphi(b_1 b_2 \cdots b_n) \ = \ \varphi( a_1 a_2 \cdots a_{m} ).$$
	%
	%Let us show how different hypothesis lead to boolean, monotone, and tensor
	%
	%
	%
	
	\noindent \textbf{Proof of \emph{(i)} .} Suppose $(i,i') \in E$ whenever $i \in I_{j}$ and $i' \in I_{j'}$ where $j,j' \in J$ with $j \neq j'$. 
	Put $\bm{j}=(j_1,j_2,\ldots,j_n)$ and $S_U = \cup_{r \in U}[l_{r-1}+1,l_r]$ for each $U \in \ker[\bm{j}] \in P(n)$. 
	%
	%Additionally, we let $j_U$ denote the common value $j_r$ for $r \in U$.  
	%
	We will show that $\pi_U = \ker_G[\bm{i}|_{S_U}]$ is a subset of $\pi = \ker_G[\bm{i}]$ for each $U \in \ker[\bm{j}]$. 

	Take any $U \in \ker[\bm{j}]$.
	Suppose $k_1 \sim_{\pi_U} k_2$ with $k_1 , k_2 \in S_U$ with $k_1 < k_2$ and let $\ell \in [n]$ so that $k_1 < \ell < k_2$ and $i_{\ell} \neq i_{k_1}$.  
	Let $r_1,t \in [n]$ with $r_1 \leq t$ such that $k_1 \in [\ell_{r_1-1}+1,\ell_{r_1}]$ and  $\ell \in [\ell_{t-1}+1,\ell_t]$, so we have $i_{k_1} \in I_{j_r}$ and $i_{\ell} \in I_{j_t}$. 
	Note that $\ell \in S_{U}$ if and only if $j_t = j_{r_1}$. 
	Thus, if $\ell \notin S_U$, then $(i_\ell,i_{k_1}) \in E$ by hypothesis since $j_t \neq j_{r_1}$; on the other hand, if $\ell \in S_U$, then $(i_\ell,i_{k_1}) \in E$ since $k_1 \sim_{\pi_U} k_2$. 
	In any case, we obtain $k_1 \sim_{\pi} k_2$. 
	Suppose now $k_1 \in S_U$ and $k_2 \in [m]\setminus S_U$.
	%
	%Note that . 
	%
	Let $r_1, r_2 \in [n]$ such that $k_1 \in [l_{r_1-1}+1,l_{r_1}] $ and $k_2 \in [l_{r_2-1}+1,l_{r_2}]$, so we have $i_{k_1} \in I_{j_{r_1}}$ and $i_{k_2} \in I_{j_{r_2}}$. 
	Since $k_2 \notin S_U$, we have $j_{r_2} \neq j_{r_1}$, and hence $i_{k_1} \neq i_{k_2}$ and $k_1 \nsim_{\pi} k_2$ since $I_{j_{r_1}}$ and $I_{j_{r_2}}$ are disjoint. 
	Thus, Lemma \ref{lem_weak_bm_ver_kernel} implies $\ker_G[\bm{i}|_{S_U}]$ is a subset of $\ker_G[\bm{i}]$. 

	Now, since $\ker_G[\bm{i}]$ and $\ker_G[\bm{i}|_{S_U}]$ are partitions of $[m]$ and $S_U$, respectively, and $[m] = \cup_{U \in \ker[\bm{j}]} S_U$, the fact that $\ker_G[\bm{i}|_{S_U}]$ is a subset of $\ker_G[\bm{i}]$ for every $U \in \ker[\bm{j}]$ implies the partition $\ker_G[\bm{i}]$ is the disjoint union of $\ker_G[\bm{i}|_{S_U}]$ with $U \in \ker[\bm{j}]$. 
	It follows from BMT independence that    
	\[
	\varphi( b_1 b_2 \cdots b_n )
	\ \ =  
	\prod_{U \in \ker(\bm{j})} \prod_{ V \in \ker_G(\bm{i}|_{S_U})} \varphi \left( (a_k) | V  \right)
	\ \ =  \prod_{U \in \ker(\bm{j})}  \varphi \left( (b_r) | U  \right). 
	\]
	Therefore,  $(\mathcal{B}_j)_{j \in J}$ are tensor independent. \ \\

	\noindent \textbf{Proof of \emph{ii)}
		.} Suppose now that $(i,i') \notin E$ whenever $i \in I_{j}$ and $i' \in I_{j'}$ with $j,j' \in J$ and $j \neq j'$. 
	Let $(\ell,\ell') = (\ell_0+1,\ell_1+1 )$ and $S = \{ \ell, \ell+1, \ldots, \ell'-1 \}$. 
	Let $\pi$, $\pi_S$, and $\pi_{S^c}$ denote $\ker_G[\bm{i}]$, $\ker_G[\bm{i}|_S]$, and $\ker_G[\bm{i}|_{S^c}]$, respectively.  
	We will show that the partitions $\ker_G[\bm{i}|_S]$ and $\ker_G[\bm{i}|_{S^c}]$ are contained in  $\ker_G[\bm{i}]$. 

	Since $S$ and $S^c$ are sub-intervals of $[m]$, if either $k_1 \sim_{\pi_S} k_2$ with $k_1,k_2 \in S$ or $k_1 \sim_{\pi_{S^c}} k_2$ with $k_1,k_2 \in S^c$, then $k_1 \sim_\pi k_2$.
	Supose now $k_1 \in S$ and $k_2 \in S^c$. 
	Thus, we have $i_{k_1} \in I_{j_1}$ and $i_{k_2} \in I_{j_r}$ for some $r \geq 2$. 
	If $r=2$, then $i_{k_1} \neq i_{k_2}$ since $I_{j_1}$ and $I_{j_2}$ are disjoint due to $j_1 \neq j_2$;
	on the other hand, if $r >3$, then either $i_{k_1} \neq i_{k_2}$ or $i_{k_1} = i_{k_2}$ and $k_1 < \ell' \leq \ell_2 < k_2$ with  $(i_{\ell'},i_{k_1}) \notin E$ since $i_{\ell'} \in I_{j_2}$  and $j_1 \neq j_2$. 
	In any case, we obtain $k_1 \nsim_{\pi} k_2$. 
	It follows from Lemma \ref{lem_weak_bm_ver_kernel} that $\ker_G[\bm{i}|_S]$ and $\ker_G[\bm{i}|_{S^c}]$ are subsets of $\ker_G[\bm{i}]$. 

	Hence, we obtain $\varphi(b_1 b_2 \cdots b_n)  = \varphi(b_1) \varphi(b_2 \cdots b_n)$ by Lemma \ref{lem_weak_bm}.  
	Repeating the same argument for $(\ell,\ell')=(\ell_1+1,\ell_2+1),\ldots,(\ell_{n-2}+1,\ell_{n-1}+1)$, we obtain $\varphi(b_1 b_2 \cdots b_n)  = \varphi(b_1) \varphi(b_2) \cdots \varphi(b_n)$. 
	%\begin{enumerate} \item[(B)] \makebox[\linewidth]{$\varphi(b_1 b_2 \cdots b_n)  = \varphi(b_1) \varphi(b_2) \cdots \varphi(b_n).$} \end{enumerate} Therefore, $(\mathcal{B}_j)_{j \in J}$ are boolean independent. 
	Therefore, $(\mathcal{B}_j)_{j \in J}$ are boolean independent. \ \\

	\noindent \textbf{Proof of \emph{iii)}
		.} Finally, suppose now that $J$ has a total order $<$ so that $(i,i') \in E$ and $(i',i) \notin E$  whenever $i \in I_{j}$ and $i' \in I_{j'}$ where $j,j' \in J$ with $j' < j$. 
	Assume first $ j_1 > \cdots > j_{r-1} > j_r < j_{r+1} < \cdots < j_n$ for some $r$. 
	The same recursive argument used for boolean independence above can be applied in this case, first for $(\ell,\ell') =(\ell_0+1,\ell_1+1),\ldots,(\ell_{r-2}+1,\ell_{r-1}+1)$, to get $\varphi(b_1 b_2 \cdots b_n)  = \varphi(b_1)  \cdots \varphi(b_{r-1}) \varphi(b_r \cdots b_n)$, and second for $(\ell',\ell) =(\ell_{n-1}+1,\ell_n),\ldots,(\ell_{r-1}+1,\ell_{r})$.
	Note that we go in decreasing order from $n$ to $r$ in the latter case. 
	We then obtain $$\varphi(b_1 b_2 \cdots b_n)  = \varphi(b_1)  \cdots \varphi(b_{r-1}) \varphi(b_r)  \cdots \varphi(b_n).$$ 

	Assume now $j_{r-1} < j_r > j_{r+1}$ for some $r$. 
	Let $\ell = \ell_{r-1}+1$, $\ell' = \ell_r+1$, and $S = \{ \ell, \ell+1, \ldots, \ell'-1 \}$. 
	We will show that $\ker_G[\bm{i}|_S]$ and $\ker_G[\bm{i}|_{S^c}]$ are subsets of $\ker_G[\bm{i}]$. 

	Since $S$ is a sub-interval of $[m]$, we obtain $k_1 \sim_{\pi} k_2$ for any $k_1,k_2 \in S$ with  $k_1 \sim_{\pi_S} k_2$. 
	Suppose now $k_1 \in S$ and $k_2 \in S^c$. 
	Thus, we have $i_{k_1} \in I_{j_r}$ and $i_{k_2} \in I_{j_t}$ for some $ t \in \{1,\ldots,r-1,r+1,\ldots,n\}$.    
	If $t=r+1$, then $i_{k_1} \neq i_{k_2}$ since $I_{j_r}$ and $I_{j_{r+1}}$ are disjoint due to $j_r \neq j_{r+1}$;
	on the other hand, if $t>r+1$, then either $i_{k_1} \neq i_{k_2}$ or $i_{k_1} = i_{k_2}$ and $k_1 < \ell' \leq \ell_{r+1} < k_2$ with  $(i_{\ell'},i_{k_1}) \notin E$ since $i_{k_1} \in I_{j_r}$, $i_{\ell'} \in I_{j_{r+1}}$, and $j_{r+1} < j_{r}$. 
	Similar arguments work if $t \leq r-1$. 
	In any case, we obtain $k_1 \nsim_{\pi} k_2$. 
	It then follows from Lemma \ref{lem_weak_bm_ver_kernel} that $\ker_G[\bm{i}|_S]$ is contained in $\ker_G[\bm{i}]$. 

	To obtain that $\ker_G[\bm{i}|_{S^c}]$ is also contained in $\ker_G[\bm{i}]$, it only remains to prove that $k_1 \sim_\pi k_2$ whenever $k_1 \sim_{\pi_{S^c}} k_2$ with $k_1,k_2 \in S^c$. 
	So, let us assume $k_1,k_2 \in S^c$ with $k_1 \sim_{\pi_{S^c}} k_2$. 
	If $k_1 , k_2 \leq \ell-1$ or $\ell' \leq k_1,k_2$, then $k_1 \sim_\pi k_2$ since $\{1,2,\ldots,\ell-1\}$ and $\{\ell',\ell'+1,\ldots,m\}$ are sub-intervals of $[m]$. 
	Thus, without loss of generality, we can assume $k_1 \leq \ell-l$ and $\ell' \leq k_2$.
	The condition $k_1 \sim_{\pi_{S^c}} k_2$ means $i_{k_1} = i_{k_2}$ and $(i_{\ell''},i_{k_1}) \in E$ for $k_1 < \ell'' < k_2$ with $i_{\ell''} \neq i_{k_1}$ and $\ell'' \in S^c$. 
	Hence, to obtain $k_1 \sim_\pi k_2$, we only need to show that $(i_{\ell''},i_{k_1}) \in E$ for $k_1 < \ell'' < k_2$ with $i_{\ell''} \neq i_{k_1}$ and $\ell'' \in S$. 
	Take $1 \leq t \leq r-1$ so that $i_{k_1} \in I_{j_{t}} $. 
	Now, if $i_{k_1} = i_{\ell-1}$ and $\ell'' \in S$, then $j_t = j_{r-1}$ and $i_{\ell''} \in I_{j_{r}}$, and hence $i_{k_1}\neq i_{\ell''}$ since $I_{j_{r-1}}$ and $I_{j_{r}}$ are disjoint and $(i_{\ell''},i_{k_1}) \in E$ since $j_{r-1} < j_r$.
	On the other hand, if $i_{k_1} \neq i_{\ell-1}$, then $k_1 < \ell -1 < \ell' \leq k_2$ with $\ell - 1 \in S^c$ and $i_{\ell-1} \in I_{j_{r-1}}$, so we must have  $(i_{\ell-1},i_{k_1}) \in E$ since  $k_1 \sim_{\pi_{S^c}} k_2$; however, $(i_{\ell-1},i_{k_1}) \in E$  only if $j_t \leq j_{r-1}$, so we obtain $j_t < j_r$, and hence  $(i_{\ell''},i_{k_1}) \in E$  for any $\ell'' \in S$ since $i_{\ell''} \in I_{j_{r}}$ and $i_{k_1} \in I_{j_t}$. 
	In any case, we get $k_1 \sim_\pi k_2$, and therefore $\ker_G[\bm{i}|_{S^c}]$ is a subset of  $\ker_G[\bm{i}]$ by Lemma \ref{lem_weak_bm_ver_kernel}. 

	It follows from Lemma \ref{lem_weak_bm} that $\varphi(b_1 \cdots b_n) = \varphi(b_r)  \varphi(b_1 \cdots b_{r-1}b_{r+1}\cdots b_n)$. 
	Therefore,  $(\mathcal{B}_j)_{j \in J}$ are monotone independent. 
\end{proof}

\begin{rem}
Associativity of Boolean, monotone, and tensor independence is recaptured by consistency for BMT independence.
To demonstrate this, let us come back to the situation described above regarding  $\mathcal{A}_1, \mathcal{A}_2, \mathcal{A}_3$. 
We add an edge from  $\mathcal{A}_1$ to $\mathcal{A}_2$, so the independence digraph becomes 
\begin{center}
	\begin{tikzpicture}
		\node (1) at (0,2.4) {$\mathcal{A}_1$};
		\node  (2) at (0,.8) {$\mathcal{A}_2$};
		\node (3) at (1.6,1.6) {$\mathcal{A}_3$};
		
		% arrows
		\draw[->]  (1) edge (2);
		\draw[->]  (1) edge (3); 
		\draw[->]  (2) edge (3) ;
	\end{tikzpicture}
\end{center} 
By consistency, Proposition \ref{prop_associativity}, we know that $\mathcal{A}_1$ is monotone independent from $\left< \mathcal{A}_2, \mathcal{A}_3\right>$ and $\mathcal{A}_2$  is monotone independent from $\mathcal{A}_3$. 
Moreover, $\mathcal{A}_1$  is monotone independent from $\mathcal{A}_2$,  
and $\left< \mathcal{A}_1, \mathcal{A}_2\right>$ is monotone independent from $\mathcal{A}_3$. 
Both situations are equivalent due to BMT independence, but this is nothing else than associativity of monotone independence. 
\end{rem}

\section{Construction of BMT algebras of operators }
The purpose of this section is to provide with an analytic framework for BMT random variables. 
Namely, we give an explicit construction to realize any finite family of BMT independent random variables as bounded operators on a Hilbert space. 
Additionally, we show that this construction produces random variables that are BM independent and not just weak BM independent when the corresponding independence digraph comes from a partial order.

Let $H_1,\ldots, H_N$ be complex Hilbert spaces with distinguished unit vectors $\xi_i\in H_i$. 
Let $I_i$ be the identity operator on $H_i$ and let $P_i:H_i\to H_i$ be the orthogonal projection defined by 
\[ P_i(x)=\langle x,\xi_i \rangle_i \xi_i. \]
We consider the non-commutative probability spaces $(B(H_i),\varphi_i)$ where $B(H_i)$ is the space of bounded linear operators on $H_i$ and $\varphi_i:B(H_i)\to \mathbb{C}$ is the vector-state given by
\[\varphi_i(A)=\langle A\xi_i,\xi_i \rangle_i. \]
Let $H$ denote the tensor product of Hilbert spaces $H_1\otimes \cdots \otimes H_N$ with inner product 
\[\langle h_1\otimes \cdots \otimes h_N,h_1'\otimes \cdots \otimes h_N' \rangle=\prod_{i=1}^N \langle h_i,h_i'\rangle_i \]
and let us consider the non-commutative probability space $(B(H),\varphi)$ where $\varphi(T)=\langle T\xi,\xi\rangle$ with unit vector $\xi=\xi_1\otimes \cdots \otimes \xi_N$. 

\begin{defn}
	Given a digraph $G_N=(V_N,E_N)$ with $V_N=[N]$ we define the $\ast$-homomorphism $\pi_i:B(H_i)\to B(H)$ as 
	\[ 
	\pi_i(A)=P_{i,1}\otimes \cdots \otimes P_{i, i-1}\otimes A \otimes P_{i,i+1}\otimes \cdots \otimes P_{i,N}	
	\]
	where $P_{i,j}=I_j$, if $(i,j)\in E_N$, and $P_{i,j}=P_j$, if $(i,j)\notin E_N$.
\end{defn}

\begin{prop}\label{representation_pi}
	The triple $(H,\pi_i,\xi)$ is a representation of $(B(H_i),\varphi_i)$, i.e., $\pi_i: B(H_i)\to B(H)$ is a $\ast$-homomorphism and $\xi$ is a unit vector such that $\varphi_i(A)=\langle \pi_i(A)\xi,\xi\rangle$ for each $A\in B(H_i)$.
\end{prop}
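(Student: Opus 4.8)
The plan is to verify the two defining properties of a representation directly from the definitions, using the tensor-product structure of $H$ and the fact that each $\pi_i$ is built from a single tensor leg carrying $A$ and projection-or-identity operators on the other legs.

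First I would check that $\pi_i$ is a $\ast$-homomorphism. Since $\pi_i(A) = P_{i,1}\otimes \cdots \otimes P_{i,i-1}\otimes A \otimes P_{i,i+1}\otimes \cdots \otimes P_{i,N}$ and the factors in position $j\neq i$ are fixed operators $P_{i,j}$ (either $I_j$ or the rank-one projection $P_j$, both self-adjoint), linearity in $A$ is immediate, and multiplicativity follows from the componentwise multiplication rule for tensor products of operators together with the idempotency $P_{i,j}P_{i,j}=P_{i,j}$ (true for both $I_j$ and $P_j$): for $A,B\in B(H_i)$ one gets $\pi_i(A)\pi_i(B) = P_{i,1}^2\otimes\cdots\otimes AB\otimes\cdots\otimes P_{i,N}^2 = \pi_i(AB)$. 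Adjoints behave the same way because each $P_{i,j}$ is self-adjoint, so $\pi_i(A)^\ast = P_{i,1}\otimes\cdots\otimes A^\ast\otimes\cdots\otimes P_{i,N} = \pi_i(A^\ast)$. I should also note $\pi_i(I_i)$ is the projection onto $\big(\bigotimes_{j\neq i} P_{i,j}H_j\big)\otimes H_i$, which need not be the identity on $H$, so $\pi_i$ is a $\ast$-homomorphism but not necessarily unital — this is fine for the definition of a representation as stated.

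Next I would compute $\langle \pi_i(A)\xi,\xi\rangle$ with $\xi = \xi_1\otimes\cdots\otimes\xi_N$. Using the product formula for the inner product on $H$, this factors as $\prod_{j\neq i}\langle P_{i,j}\xi_j,\xi_j\rangle_j \cdot \langle A\xi_i,\xi_i\rangle_i$. For each $j\neq i$, whether $P_{i,j}=I_j$ or $P_{i,j}=P_j$, we have $P_{i,j}\xi_j = \xi_j$ (the identity fixes $\xi_j$, and $P_j\xi_j = \langle\xi_j,\xi_j\rangle_j\xi_j = \xi_j$ since $\xi_j$ is a unit vector), so $\langle P_{i,j}\xi_j,\xi_j\rangle_j = \|\xi_j\|_j^2 = 1$. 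Hence the product collapses to $\langle A\xi_i,\xi_i\rangle_i = \varphi_i(A)$, as required. Finally $\|\xi\|^2 = \prod_{i=1}^N\|\xi_i\|_i^2 = 1$, so $\xi$ is a unit vector.

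There is no real obstacle here; the statement is essentially a bookkeeping verification, and the only point requiring a moment's care is recognizing that the distinguished vectors $\xi_j$ are eigenvectors (with eigenvalue $1$) of every operator that can appear off the $i$-th leg, which is exactly what makes the state $\varphi$ restrict correctly to $\varphi_i$ through $\pi_i$. I would present the three verifications — multiplicativity/adjoint-compatibility, the state identity, and the unit-vector claim — as short displayed computations in that order.
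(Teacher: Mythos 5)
Your proposal is correct and follows essentially the same route as the paper: componentwise multiplication and adjoints on the tensor factors together with $P_{i,j}=P_{i,j}^2=P_{i,j}^\ast$ give the $\ast$-homomorphism property, and the observation $P_{i,j}\xi_j=\xi_j$ collapses the product of inner products to $\varphi_i(A)$. Your extra remark on non-unitality is a reasonable aside but not needed.
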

\begin{proof}
	For each $A,B \in B(H_i)$ we have that $\pi_i(AB)=\pi_i(A)\pi_i(B)$ and $\pi_i(A^*)=\pi_i(A)^*$ since 
	$(A_1\otimes \cdots \otimes A_N)(B_1\otimes \cdots \otimes B_N)=(A_1B_1\otimes \cdots \otimes A_NB_N)$, $(A_1\otimes \cdots \otimes A_N)^*=(A_1^*\otimes \cdots \otimes A_N^*)$ and $P_{i,j}=P_{i,j}^2=P_{i,j}^*$. 
	Thus, $\pi_i$ is a $*$-homomorphism. 
	Moreover, since $P_{i,j}\xi_j=\xi_j$ we obtain $\varphi_i(A)=\langle A\xi_i,\xi_i\rangle_i\prod_{j\neq i} \langle P_{i,j}\xi_j,\xi_j \rangle_j=\langle \pi_i(A)\xi,\xi\rangle$.%=\varphi(\pi_i(A)).
\end{proof}

\begin{rem}\label{marginalsandhomomorphism}
	Notice that for all $A,B\in B(H_i)$ the projection $P_i$ satisfies
	\begin{align*}
		\langle AP_iB\xi_i, \xi_i \rangle_i=\langle A\xi_i,\xi_i \rangle_i\langle B\xi_i,\xi_i \rangle_i
		\quad \text{and} \quad 
		P_i A P_i=\langle A \xi_i,\xi_i \rangle_i P_i . 
	\end{align*}
	Indeed, since $P_i(x)=\langle x,\xi_i \rangle_i \xi_i$ 
	we obtain $\langle AP_i ( B\xi_i) ,\xi_i\rangle_i = \langle A \langle B\xi_i,\xi_i \rangle \xi_i,\xi_i\rangle_i$ and $[P_i A P_i](x)=\langle x,\xi_i\rangle_i P_i (A\xi_i)=\langle A\xi_i, \xi_i \rangle_i P_i(x)$ for all $x\in H_i$. 
	In terms of the functionals $\varphi_i$, these relations become
	\[
	\varphi_i(AP_iB)=\varphi_i(B)\varphi_i(A)\quad \text{and} \quad P_iAP_i=\varphi_i(A)P_i .
	\] 
\end{rem}

\begin{thm}\label{thm:bmt_operators}
	The family of $\ast$-subalgebras $\{\pi_i(B(H_i))\}_{i=1}^N$ is BMT independent in $(B(H),\varphi)$ with respect to $G_N$.
\end{thm}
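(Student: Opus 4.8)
The plan is to reduce the computation of $\varphi(a_1 \cdots a_m)$ to a slot-by-slot analysis on $H = H_1 \otimes \cdots \otimes H_N$. Write $a_k = \pi_{i_k}(A_k)$ with $A_k \in B(H_{i_k})$ and $\bm{i} = (i_1,\ldots,i_m)$. Since each $\pi_{i_k}(A_k)$ is an elementary tensor of operators and the product of elementary tensors is taken slotwise, the product $a_1 \cdots a_m$ equals a single elementary tensor $T_1 \otimes \cdots \otimes T_N$, where $T_j = Q_1^{(j)} Q_2^{(j)} \cdots Q_m^{(j)}$ and $Q_k^{(j)} = A_k$ if $i_k = j$, $Q_k^{(j)} = I_j$ if $i_k \neq j$ and $(i_k,j) \in E_N$, and $Q_k^{(j)} = P_j$ if $i_k \neq j$ and $(i_k,j) \notin E_N$. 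As $\xi = \xi_1 \otimes \cdots \otimes \xi_N$, this yields $\varphi(a_1\cdots a_m) = \prod_{j=1}^{N} \langle T_j \xi_j, \xi_j\rangle_j$, so it suffices to evaluate each factor and then recognize the product as $\prod_{B \in \ker_G[\bm{i}]} \varphi[(a_k)|_B]$.

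Fix $j$ and set $K_j = \{ k : i_k = j \} = \{ k_1 < k_2 < \cdots < k_r \}$. Dropping the factors equal to $I_j$, the operator $T_j$ becomes $A_{k_1} W_1 A_{k_2} W_2 \cdots W_{r-1} A_{k_r}$, possibly flanked by extra $P_j$ and $I_j$ factors, where $W_s$ is the product of the $Q_\ell^{(j)}$ over $k_s < \ell < k_{s+1}$; since every such $\ell$ has $i_\ell \neq j$ (as $k_s, k_{s+1}$ are consecutive in $K_j$) and $P_j^2 = P_j$, we get $W_s = I_j$ if $(i_\ell, j) \in E_N$ for all such $\ell$, and $W_s = P_j$ otherwise. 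The flanking factors disappear from $\langle T_j \xi_j, \xi_j\rangle_j$ because $P_j^* = P_j$ and $P_j \xi_j = \xi_j$, and then the two identities from Remark \ref{marginalsandhomomorphism} --- in particular $\varphi_j(X P_j Y) = \varphi_j(X)\varphi_j(Y)$ --- applied inductively collapse the word at every $W_s = P_j$, giving $\langle T_j \xi_j, \xi_j\rangle_j = \prod_{t} \varphi_j(R_t)$, where $R_1,\ldots,R_{p_j}$ are the products $\prod_{k \in B}^{\to} A_k$ taken over the maximal runs $B$ of consecutive elements of $K_j$ across which no $W_s$ equals $P_j$.

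It remains to match these runs with the blocks of $\ker_G[\bm{i}]$. By Definition \ref{kerG}, for consecutive $k_s, k_{s+1} \in K_j$ we have $k_s \sim_{\ker_G[\bm{i}]} k_{s+1}$ precisely when $(i_\ell, j) \in E_N$ for every $\ell$ with $k_s < \ell < k_{s+1}$ and $i_\ell \neq j$, i.e., precisely when $W_s = I_j$; since the blocks of $\ker_G[\bm{i}]$ contained in $K_j$ are intervals in the ordering $k_1 < \cdots < k_r$, they are exactly the runs described above. Hence $\langle T_j \xi_j, \xi_j\rangle_j = \prod_{B \in \ker_G[\bm{i}],\, B \subseteq K_j} \varphi_j\!\left( \prod_{k\in B}^{\to} A_k \right)$. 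Because $\ker_G[\bm{i}]$ refines $\ker[\bm{i}]$, each of its blocks lies in exactly one $K_j$, so taking the product over $j$ gives $\varphi(a_1 \cdots a_m) = \prod_{B \in \ker_G[\bm{i}]} \varphi_{i(B)}\!\left( \prod_{k\in B}^{\to} A_k \right)$, where $i(B)$ denotes the common index on $B$. Finally, since $\pi_{i(B)}$ is a homomorphism, $(a_k)|_B = \pi_{i(B)}\!\left( \prod_{k\in B}^{\to} A_k \right)$, so Proposition \ref{representation_pi} gives $\varphi_{i(B)}\!\left( \prod_{k\in B}^{\to} A_k \right) = \varphi[(a_k)|_B]$, which completes the proof.

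The work here is essentially bookkeeping rather than conceptual: the only analytic input is the pair of elementary identities in Remark \ref{marginalsandhomomorphism}, and the main care must be taken in (a) correctly reading off, slot by slot, which flanking and interior factors are $I_j$ versus $P_j$, and (b) verifying the precise correspondence between the $P_j$-separated runs inside each $K_j$ and the blocks of $\ker_G[\bm{i}]$ coming from Definition \ref{kerG}.
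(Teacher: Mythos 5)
Your proposal is correct and follows essentially the same route as the paper's own proof: decompose the product slotwise into an elementary tensor, read off which interior factors in each slot are $I_j$ versus $P_j$ from the edge set, discard the flanking factors using $P_j\xi_j=\xi_j$, factor the word at each interior $P_j$ via the identities of Remark \ref{marginalsandhomomorphism}, identify the resulting runs with the blocks of $\ker_{G_N}[\bm{i}]$, and transfer back through the homomorphisms $\pi_j$. The only cosmetic difference is notational (your $K_j$, $W_s$ versus the paper's $s_w(k)$, $Q_{r_w(k)}$ bookkeeping).
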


\begin{proof}
	Let us denote $\AA_i=\pi_i(B(H_i))$ for $i=1,\ldots, N$. 
	We consider the monomial $a_1a_2\cdots a_n$ with $a_j\in \AA_{i_j}$. 
	Let $A_j\in B(H_{i_j})$ be such that $a_j=\pi_{i_j}(A_j)$ for $j=1,\ldots, n$, this is 
	\[ 
	a_j=P_{i_j,1}\otimes \cdots \otimes P_{i_j, i_j-1}\otimes A_j \otimes P_{i_j,i_j+1}\otimes \cdots \otimes P_{i_j,N}.
	\]
	Put $\bm{i}=(i_1,\ldots, i_n)$ and $m=\#\ker[\bm{i}]$. 
	We also define 
	\[ 
	C=\{c_1<\cdots<c_m\}=\{i_j:1\leq j \leq n\} \quad \text{and} \quad D=\{d_1<\cdots<d_{N-m}\}=[N]\setminus C.
	\]
	The product $a_1a_2\cdots a_n$ can be written as $B_1\otimes \cdots \otimes B_N$ where $B_{d_k}\in \{I_{d_k},P_{d_k}\}$ for $k=1,\ldots, N-m$ and $B_{c_k}$ is of the form
	\[
	Q_{r_1(k)}A_{s_1(k)}Q_{r_2(k)}A_{s_2(k)}\cdots Q_{r_{l(k)}(k)}A_{s_{l(k)}(k)}Q_{r_{l(k)+1}(k)}
	\]
	for $k=1,\ldots,m$ with $\{s_1(k)<\cdots<s_{l(k)}(k)\}=\{j\in [n]: i_j=c_k\}$
	and $Q_{r_w(k)}=I_{c_k}$, if  $(i_l,i_{s_{w}(k)})\in E_N$ for all $s_{w-1}(k)<l<s_{w}(k)$, and $Q_{r_w(k)}=P_{c_k}$, otherwise. 
	Note that $Q_{r_w(k)}\in \{I_{c_k},P_{c_k}\}$ comes from the fact that the variables $a_j$ not in $\AA_{c_k}$ have either $P_{c_k}$ or $I_{c_k}$ in their $c_k$-term in the tensor product that defines them, so between $A_{s_{w-1}(k)}$ and $A_{s_{w}(k)}$ there is a product of $P_{c_k}$'s and $I_{c_k}$'s which is $I_{c_k}$ only if all the elements in between are $I_{c_k}$.
	Then, we have that
	\begin{align*}
		\varphi(a_1a_2\cdots a_n)%&=\langle (B_1\otimes \cdots \otimes B_N)\xi,\xi\rangle=\prod_{j=1}^N \langle B_j \xi_j,\xi_j\rangle_j\\
		&=\left(\prod_{k=1}^{N-m}\langle B_{d_k} \xi_{d_k},\xi_{d_k}\rangle_{d_k}\right)\left(\prod_{k=1}^{m}\langle B_{c_k} \xi_{c_k},\xi_{c_k}\rangle_{c_k}\right)=\prod_{k=1}^{m}\langle B_{c_k} \xi_{c_k},\xi_{c_k}\rangle_{c_k}
		%\\&=\prod_{k=1}^{m}\langle Q_{r_1(k)}A_{s_1(k)}Q_{r_2(k)}A_{s_2(k)}\cdots Q_{r_{l(k)}(k)}A_{s_{l(k)}(k)}Q_{r_{l(k)+1}(k)} \xi_{c_k},\xi_{c_k}\rangle_{c_k}.
	\end{align*}
	Note that $\langle B_{c_k} \xi_{c_k},\xi_{c_k}\rangle_{c_k}=\langle A_{s_1(k)}Q_{r_2(k)}A_{s_2(k)}\cdots Q_{r_{l(k)}(k)}A_{s_{l(k)}(k)} \xi_{c_k},\xi_{c_k}\rangle_{c_k}$.
	Indeed, if $Q_{r_1(k)}=I_{c_k}$, since $Q_{r_{l(k)+1}(k)} \xi_{c_k}=\xi_{c_k}$ we obtain that 
	\[\langle B_{c_k} \xi_{c_k},\xi_{c_k}\rangle_{c_k}=\langle A_{s_1(k)}Q_{r_2(k)}A_{s_2(k)}\cdots Q_{r_{l(k)}(k)}A_{s_{l(k)}(k)} \xi_{c_k},\xi_{c_k}\rangle_{c_k}; \]
	on the other hand, if $Q_{r_1(k)}=P_{c_k}$, Remark \ref{marginalsandhomomorphism} and the fact that $Q_{r_{l(k)+1}(k)} \xi_{c_k}=P_{c_k}\xi_{c_k}$ imply 
	\[ \langle B_{c_k} \xi_{c_k},\xi_{c_k}\rangle_{c_k}=\langle A_{s_1(k)}Q_{r_2(k)}A_{s_2(k)}\cdots Q_{r_{l(k)}(k)}A_{s_{l(k)}(k)} \xi_{c_k},\xi_{c_k}\rangle_{c_k}.\]
	%
	%Thus, in any case
	%\begin{align*}
	%   \varphi(a_1a_2\cdots a_n)&=\prod_{k=1}^{m}\langle A_{s_1(k)}Q_{r_2(k)}A_{s_2(k)}\cdots Q_{r_{l(k)}(k)}A_{s_{l(k)}(k)}\xi_{c_k},\xi_{c_k}\rangle_{c_k}\\
	%  &=\prod_{k=1}^{m} \varphi_{c_k}\left(A_{s_1(k)}Q_{r_2(k)}A_{s_2(k)}\cdots Q_{r_{l(k)}(k)}A_{s_{l(k)}(k)}\right).
	%\end{align*}
	Let us consider $W_k=\{w_1(k)<\cdots<w_{t(k)}(k)\}=\{w: Q_{r_w(k)}=P_{c_k}\}$ for each $k=1,\ldots,m$. 
	It follows from the definition of $\varphi_{c_k}$ that
	\begin{align*}
		\varphi_{c_k}(B_{c_k})&=\varphi_{c_k}\bigg[\left(\prod_{l=1}^{w_1(k)-1}A_{s_l(k)}\right)P_{c_k}\cdots P_{c_k} \left(\prod_{l=w_{t(k)-1}(k)}^{w_{t(k)}(k)-1}A_{s_l(k)}\right)P_{c_k} \left(\prod_{l=w_{t(k)}(k)}^{l(k)} A_{s_l(k)}\right)\bigg]\\
		&=\varphi_{c_k}\left(\prod_{l=1}^{w_1(k)-1}A_{s_l(k)}\right)
		\cdots \varphi_{c_k} \left(\prod_{l=w_{t(k)-1}(k)}^{w_{t(k)}(k)-1}A_{s_l(k)}\right)\varphi_{c_k} \left(\prod_{l=w_{t(k)}(k)}^{l(k)} A_{s_l(k)}\right)
	\end{align*}
	where the second equality comes from Remark \ref{marginalsandhomomorphism}.
	Now, recall that $\ker_{G_N}[\bm{i}]$ is a refinement of $\ker[\bm{i}]$ where $s_{w-1}(k)\sim s_{w}(k)$ if $(i_l,i_{s_w(k)})\in E_N$ for all $s_{w-1}(k)<l<s_w(k)$. 
	Then, the block $\{s_1(k)<\cdots<s_{l(k)}(k)\}\in \ker[\bm{i}]$ corresponding to the subalgebra $c_k$ is decomposed in $\ker_{G_N}[\bm{i}]$ into $\{s_{1}(k),\ldots,s_{w_1(k)-1}(k)\}$, $\ldots$, $\{s_{w_{t(k)-1}(k)-1}(k),\ldots,s_{w_{t(k)}(k)-1}(k)\}$, $\{s_{w_{t(k)}}(k),\ldots,s_{l(k)}\}$. 
	Thus, we obtain that
	\begin{align*}
		\varphi(a_1a_2\cdots a_n)&=\prod_{k=1}^m \langle B_{c_k}\xi_{c_k},\xi_{c_k}\rangle_{c_k}=\prod_{V\in \ker_{G_N}[\bm{i}]}  \varphi_{c_k}\left(\prod_{s\in V}^{\rightarrow } A_s\right)
	\end{align*}
	where $c_k$ depends on each block $V$ being the common value of all $i_j$ with $j\in V$. 
	For each $V\in \ker_{G_N}[i]$ we have 
	\begin{align*}
		\varphi_{c_k}\left(\prod_{s\in V}^{\rightarrow } A_s\right)=\varphi\left[\pi_{c_k}\left(\prod_{s\in V}^{\rightarrow } A_s\right)\right]=  \varphi\left[\prod_{s\in V}^{\rightarrow } \pi_{c_k}\left(A_s\right)\right]= \varphi\left[\prod_{s\in V}^{\rightarrow } a_s\right]
	\end{align*}
	due to $\varphi_{c_k}=\varphi \circ \pi_{c_k}$, the fact that $\pi_{c_k}$ is an algebra homomorphism, and the definition of the $a_s$. 
	Therefore, we get
	\begin{align*}
		\varphi(a_1a_2\cdots a_n)=\prod_{V\in \ker_{G_N}[\bm{i}]}\varphi\left[\prod_{s\in V}^{\rightarrow } a_s\right]=\prod_{V\in \ker_{G_N}[\bm{i}]} \varphi\left[(a_i)|V\right].
	\end{align*}
	This proves that the subalgebras $\{\pi_j(B(H_j))\}_{1\leq j \leq N}$ are $BMT$-independent in $(B(H),\varphi)$ with respect to $G_N$.
\end{proof}

\begin{rem}
	If we consider the construction of the homomorphisms $\pi_i$ and $G$ is the digraph of a partial order $(V_N,\preceq)$, then we will have $\xi\prec \rho$ if and only if $P_{\rho, \xi}=I_{\xi}$ and $P_{\xi,\rho}=P_\rho$ 
	and $\xi \nsim \rho$ if and only if $P_{\rho,\xi}=P_\xi$ and $P_{\xi,\rho}=P_\rho$. 
	%
%	Thus, by the transitivity of $\preceq$, we have that $P_{\rho,\xi}=I_\xi$ implies $\xi\prec \rho$, and hence $P_{\rho,\xi}=P_{\xi}$.
\end{rem}

We already prove in the Theorem \ref{digraphorder} that when $G$ is the digraph of a partial order on a finite partially ordered set, BMT subalgebras with respect to $G$ satisfy BM2. However, in general they do not satisfy BM1, but a weaker version of it (the weak BM1 property). However, in the tensor model we have built, we can implement BM independence when considering such $G$. We establish this result in the following theorem giving another construction of a finite collection of $BM$ subalgebras different than the given in \cite{BM2007}.

%{\color{red}%Discusión de BMT con G diagramas de HAsseEsto da otra construcción de operadores BM diferente a la que da Wyczonsansky en el caso de un número finito de álgebras.} %Para órdenes parciales, la gráfica G se corresponde con el diagrama de Hasse. Nota: La arista inicia en mayor y termina menor. Como encajar un orden parcial $(V,\prec)$ en $[N]$  POsiblemente, ver si las propiedades de orden pueden ser traducidas a relaciones entre los subíndices de los $P_{\alpha,\beta}$, quizá para una posible generalización fuera de este modelo. $$P_{\xi_1,j}P_{\xi_2,j}\cdots P_{\xi_r,j}, j\notin \{\xi_l : 1\leq l \leq r\}$$ in order to be a $I_j$ $j\preceq \xi_l$ for all $l\leq r$, equality with $P_j$ in other case

\begin{thm}
	Suppose $G=([N],E)$ is the digraph of a partial order $\preceq$ on $[N]$. Then the family of $\ast$-subalgebras $\{\pi_{i}(B(H_{i})\}_{i=1}^N$ is BM independent.
\end{thm}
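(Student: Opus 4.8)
The plan is to verify that the concrete tensor-product model satisfies the property BM1 (not just weak BM1), since Theorem \ref{digraphorder} already gives us BM2. Recall that BM1 is the \emph{operator-level} identity $a_1 \cdots a_m = \varphi(a_k)\, a_1 \cdots a_{k-1} a_{k+1} \cdots a_m$, whereas weak BM1 is only the corresponding identity after applying the state $\varphi$. So the task is genuinely about the operators $\pi_i(A)$, not merely their moments.

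First I would set up notation exactly as in the proof of Theorem \ref{thm:bmt_operators}: write $a_j = \pi_{i_j}(A_j) = P_{i_j,1} \otimes \cdots \otimes A_j \otimes \cdots \otimes P_{i_j,N}$ and expand the product $a_1 \cdots a_m$ as a single elementary tensor $B_1 \otimes \cdots \otimes B_N$, where $B_c$ is an alternating string of operators $A_{s}$ (for those $s$ with $i_s = c$) interspersed with factors $Q$ that are either $I_c$ or $P_c$. The key point is to track what happens in the $i_k$-th tensor leg and in every other leg when we delete the factor $a_k$. Since $\{a_j\}$ is alternating, $i_{k-1} \neq i_k \neq i_{k+1}$; the hypothesis of BM1 for the partial-order digraph (using Remark \ref{rmk_digraph_partial}) is that $i_{k-1} \prec i_k \succ i_{k+1}$, or $i_{k-1} \nsim i_k \succ i_{k+1}$, or $i_{k-1} \prec i_k \nsim i_{k+1}$. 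In all three cases we have $(i_{k-1}, i_k) \notin E$ and $(i_{k+1}, i_k) \notin E$, which by the definition of $\pi_{i_{k-1}}$ and $\pi_{i_{k+1}}$ means that both $a_{k-1}$ and $a_{k+1}$ carry the projection $P_{i_k}$ in their $i_k$-th tensor leg.

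The heart of the argument is then leg-by-leg. In the $i_k$-th leg, the contribution of $a_{k-1} a_k a_{k+1}$ is $P_{i_k} A_k P_{i_k}$ (modulo the neighbouring factors which also land in that leg), and by Remark \ref{marginalsandhomomorphism} this equals $\varphi_{i_k}(A_k) P_{i_k} = \varphi(a_k) P_{i_k}$; meanwhile $P_{i_k}$ is exactly the factor that would appear in the $i_k$-th leg of $a_1 \cdots a_{k-1} a_{k+1} \cdots a_m$ (because any variable $a_j$ with $i_j \neq i_k$ either contributes $I_{i_k}$ or $P_{i_k}$ there, and once a $P_{i_k}$ is present the whole string collapses to $\varphi(a_k) P_{i_k}$ times the rest). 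I need to be slightly careful if $k=1$ or $k=m$, or if the factors on one side are absent, but in those boundary cases one uses $P_{i_k} \xi_{i_k} = \xi_{i_k}$ or the same Remark \ref{marginalsandhomomorphism} identity to the same effect — actually since we want an operator identity and not a vector identity, the cleanest route is to observe that whenever $a_k$ is flanked (on whichever sides exist) by operators carrying $P_{i_k}$ in leg $i_k$, the sub-product factors as $\varphi(a_k)$ times the product with $a_k$ removed, in leg $i_k$. In every leg $c \neq i_k$, removing $a_k$ only deletes a factor $P_{i_k,c} \in \{I_c, P_c\}$ from $B_c$; I must check this does not change $B_c$. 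If $P_{i_k,c} = I_c$ it is obvious. If $P_{i_k,c} = P_c$, i.e.\ $(i_k, c) \notin E$, then I claim the neighbours of $a_k$ also contribute $P_c$ or an $A$-operator that absorbs it: here is where the specific shape of the BM1 hypothesis must be used together with the partial-order structure, via transitivity, to rule out the bad configuration in which deleting a $P_c$ actually merges two blocks in leg $c$.

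The main obstacle I anticipate is precisely this last point: showing that deleting $a_k$ does not alter any leg $c \neq i_k$. Equivalently, one must show $\ker_{G}[\bm{i}]$ restricted away from position $k$ equals $\ker_G[\bm{i}']$ with $\bm{i}' = (i_1,\ldots,i_{k-1},i_{k+1},\ldots,i_m)$ — which is exactly property (1) extracted in the proof of Theorem \ref{digraphorder}! So the clean way to organize the whole proof is: (a) invoke that combinatorial fact from the proof of Theorem \ref{digraphorder} to conclude the block structure in all legs $c \neq i_k$ is unchanged by the deletion; (b) handle leg $i_k$ separately by the $P_{i_k} A_k P_{i_k} = \varphi(a_k) P_{i_k}$ computation; (c) reassemble the elementary tensor to obtain $a_1 \cdots a_m = \varphi(a_k)\, a_1 \cdots a_{k-1} a_{k+1} \cdots a_m$, i.e.\ BM1; (d) combine with BM2, already established in Theorem \ref{digraphorder} for digraphs of partial orders, and with linearity of $\varphi$ to extend from elementary tensors to all of $B(H_i)$, to conclude BM independence.
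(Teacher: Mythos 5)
Your overall strategy is the same as the paper's: verify the operator-level identity BM1 directly in the tensor model, leg by leg, collapsing the $i_k$-th leg via $P_{i_k}A_kP_{i_k}=\varphi_{i_k}(A_k)P_{i_k}$ (Remark \ref{marginalsandhomomorphism}) and using transitivity of $\preceq$ to show the remaining legs are unaffected. The paper, however, exploits the fact that BM1 is a \emph{local} operator substitution: since $a_1\cdots a_m=(a_1\cdots a_{k-2})(a_{k-1}a_ka_{k+1})(a_{k+2}\cdots a_m)$, it suffices to prove $a_{k-1}a_ka_{k+1}=\varphi(a_k)\,a_{k-1}a_{k+1}$ for a single triple; this reduces the leg-by-leg check to comparing $B_j=P_{\xi,j}P_{\rho,j}P_{\eta,j}$ with $C_j=P_{\xi,j}P_{\eta,j}$ for $j\notin\{\xi,\rho,\eta\}$, plus a short case analysis on the legs $\xi,\rho,\eta$ themselves. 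You should adopt this localization; it eliminates all the bookkeeping about the global product that your outline has to manage.

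One step of your plan, as organized, has a gap. In step (a) you propose to conclude that every leg $c\neq i_k$ is unchanged by invoking the kernel identity $\ker_G[\bm{i}]=\{\{k\}\}\cup\ker_G[\bm{i}']$ extracted in the proof of Theorem \ref{digraphorder}. That identity controls only the \emph{internal} projections $Q_{r_w(c)}$ sitting between two $A$-factors in leg $c$ (equivalently, the factorization of the \emph{state}); the leading and trailing projections $Q_{r_1(c)}$ and $Q_{r_{l(c)+1}(c)}$ are invisible to $\varphi$ because $P_c\xi_c=\xi_c$, and hence are not recorded by any kernel. But they do affect the \emph{operator}: deleting the factor $P_{i_k,c}=P_c$ contributed by $a_k$ could in principle turn a leading $P_c$ into $I_c$ and so change $B_c$. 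Ruling this out is exactly the transitivity argument you gesture at for the ``bad configuration'' (e.g.\ if $(i_{k-1},c)\in E$, i.e.\ $c\prec i_{k-1}$, and $i_{k-1}\prec i_k$, then $c\prec i_k$, contradicting $(i_k,c)\notin E$), and it cannot be replaced by the kernel fact --- it is precisely the content of the paper's case analysis on $P_{\xi,j}$, $P_{\rho,j}$, $P_{\eta,j}$. With that check carried out explicitly (and with the localization to triples), your argument closes, and together with weak BM1/BM2 from Theorem \ref{digraphorder} it gives BM independence.
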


\begin{proof}
	Since the subalgebras $\{\pi_i(B(H_i))\}_{i=1}^N$ are BMT independent, they satisfy weak BM1 and BM2, so we only need to show that BM1 holds. 
	Take $\AA_i=\pi_{i}(B(H_{i}))$ for $i\in [N]$. 
	Assume $a_1\in \AA_\xi$, $a_2\in \AA_\rho$, $a_3\in \AA_\eta$ with $\xi, \rho, \eta \in [N]$ satisfying $\xi\prec \rho \succ \eta$, $\xi \nsim \rho \succ \eta$, or $\xi\prec \rho \nsim \eta$. 
	Recall that each variable $a \in \AA_{i}$ is of the form 
	\[ a = P_{i,1}\otimes \cdots \otimes P_{i, i-1}\otimes T \otimes P_{i,i+1}\otimes \cdots \otimes P_{i,N}=\pi_{i}(T) \]
	for some $T \in B(H_{i})$. 
	So,  let $T_1\in B(H_\xi)$, $T_2\in B(H_\rho)$, and $T_3\in B(H_\eta)$ be such that $a_1=\pi_\xi(T_1)$, $a_2=\pi_\rho(T_2)$ and $a_3=\pi_\eta(T_3)$.
	As in the proof of Theorem \ref{thm:bmt_operators}, we take $B_i,C_i\in B(H_i)$ such that $a_1 a_2 a_3 =B_1\otimes \cdots \otimes B_N$ and $a_1 a_3 =C_1\otimes \cdots \otimes C_N$. 
	To show BM1 holds, it is enough to prove $B_i=C_i$ for $i\neq \rho$ and $B_\rho=\varphi(a_2)C_\rho$. 
	%
	%
	%

	%Suppose $\xi,\rho, \eta \in V$, and consider $b_\xi=\pi_\xi(T_\xi)\in \AA_\xi$, $b_\rho=\pi_\rho(T_\rho)\in \AA_\rho$, and $b_\eta=\pi_\eta(T_\eta)\in \AA_\eta$ if $\xi\neq \eta$ or $b_\eta=\pi_\xi(U_\xi)$ if $\eta=\xi$. Let  
	
	Observe that $B_j=P_{\xi,j}P_{\rho,j}P_{\eta,j}$ and $C_j=P_{\xi,j}P_{\eta,j}$ for $j\ \neq \xi,\rho,\eta$. 
	If $P_{\xi,j}=P_j$ or $P_{\eta,j}=P_j$, then $B_j = C_j = P_j$  because each operator $P_{\xi,j}$, $P_{\rho,j}$, or $P_{\eta,j}$ is either the projection $P_j$ or the identity $I_j$. 
	%, {\color{red}so}, no matter the election for the other projections in the product, when we multiply by $P_j$ the result will remain being $P_j$.
	%
	On the other hand, if $P_{\xi,j}=P_{\eta,j}=I_j$, due to the definition of the operators $P_{i,j}$ and since $G$ is the digraph associated to $\preceq$, we  have $j\prec \eta$ and $j\prec \xi$. 
	But, by hypothesis, either $\xi \prec \rho$ or $\eta \prec \rho$ holds, and hence $j\prec \rho$ and $P_{\rho,j}=I_j$. 
	Therefore, $B_j=C_j$ for $j \neq \xi, \rho, \eta$. 

	To prove $B_j=C_j$ for $j = \xi, \eta$ and $B_\rho=\varphi(a_2)C_\rho$, we consider two cases:  $\xi=\eta$ and $\xi\neq \eta$. 
	Suppose first  $\xi=\eta$. 
	Thus, none of the two conditions $\xi \nsim \rho \succ \eta$ or $\xi\prec \rho \nsim \eta$ holds, so we must have $\xi\prec \rho$, and hence $P_{\rho,\xi}=I_\xi$ and $P_{\xi,\rho}=P_\rho$. 
	Moreover,  $B_\xi=T_1 P_{\rho,\xi}T_3$, $C_\xi=T_1 T_3$, $B_\rho=P_{\xi,\rho}T_2 P_{\xi,\rho}$,  and $C_\rho=P_{\xi,\rho}P_{\xi,\rho}$ since  $a_1=\pi_\xi(T_1)$, $a_2=\pi_\rho(T_2)$, and $a_3=\pi_\eta(T_3)$. 
	Hence, $B_\xi = C_\xi$ due to $P_{\rho,\xi} = I_\xi$ and $B_\rho= P_\rho T_2 P_\rho = \varphi(a_2)C_\rho$ due to $P_{\xi,\rho} = I_\rho$ and Remark  \ref{marginalsandhomomorphism}.  

	Suppose now $\xi \neq \eta$. 
	In this case, we have that $B_\xi=T_1 P_{\rho,\xi}P_{\eta,\xi}$,  $C_\xi=T_1 P_{\eta,\xi}$,  $B_{\eta}=P_{\xi,\eta}P_{\rho,\eta}T_3$, $C_\eta=P_{\xi,\eta}T_3$, $B_\rho=P_{\xi,\rho}T_{2}P_{\eta,\rho}$, and $C_\rho=P_{\xi,\rho}P_{\eta,\rho}$.  
	Since $P_{i,j}=P_j$ if $i\nsim j$ or $i\prec j$, then $P_{\xi,\rho}=P_{\rho}=P_{\eta,\rho}$ provided $\xi \prec \rho \succ \eta$, $\xi \prec \rho \nsim \eta$ or $\xi \nsim \rho \succ \eta$. 
	Thus, Remark \ref{marginalsandhomomorphism} gives $B_\rho=P_\rho T_2 P_\rho=\varphi_{\rho}(T_2) P_\rho=\varphi(a_2)C_\rho$.  
	For $B_\xi=C_\xi$ and $B_\eta=C_{\eta}$, let us consider three sub-cases: $\xi \nsim \eta$, $\xi \prec \eta$ and $\xi \succ \eta$.  
	First, if $\xi \nsim \eta$,  so we get $P_{\eta,\xi}=P_\xi$ and $P_{\xi,\eta}=P_\eta$, and hence $B_\xi=T_1 P_\xi=C_\xi$ and $B_\eta=P_\eta T_3=C_\eta$ since  each $P_{\rho,\xi}$ and $P_{\rho,\eta}$ is either an identity $I_j$ or a projection $P_j$. 
	Second, if $\xi \prec \eta$,  so we get $P_{\eta,\xi}=I_\xi$ and $P_{\xi,\eta}=P_\eta$, and hence $B_\xi=T_1 P_{\rho,\xi}$ and $C_\xi=T_1$; moreover, since $P_{\rho,\eta}$ is either the identity $I_\eta$ or the projection $P_\eta$, we obtain $B_\eta=P_\eta T_3=C_\eta$. 
	Note that if $\eta \prec \rho$, then $\xi \prec \eta \prec  \rho$, and hence  $P_{\rho,\xi}=I_\xi$ and $B_\xi=C_\xi$;  
	on the other hand, if $\eta \nsim \rho$, then we must have $\rho \succ \xi$, and hence $P_{\rho,\xi}=I_\xi$ and $B_\xi=C_\xi$. 
	Finally, if $\xi\succ \eta$, follwing similar arguments to the case $\xi\prec \eta$, we get $B_\xi=T_1 P_\xi =C_\xi$ since $P_{\eta,\xi}=P_\xi$  and $B_{\eta}=P_{\xi,\eta} T_3 = C_\eta $ since $\rho \succ \eta$ and $P_{\rho,\eta} = I_\eta$. 
\end{proof}

Let us mention that the idea of using tensor products and rank 1 projection to construct independent algebras of operators is not new. 
 Lenczewski \cite{Len2019} gave a tensor model for Boolean independent random variables, which was extended by Franz to include monotone and anti-monotone notions of independence in  \cite{Franz}.

More interestingly and related to our construction,  Lenczewski  gave a tensor model of $\Lambda$-boolean (mixtures of Boolean and tensor) independence and $\Lambda$-monotone (mixtures of monotone and tensor) independence, \cite{Len1}.  More recently he also gave a construction in \cite{Len2019} for $c$-monotone independence \cite{Hase2011}.

\section{BMT Central Limit Theorem}
In this section, we prove the Central Limit Theorem (CLT) for BMT independent random variables  together with some of its properties regarding the possible limiting distributions. 
In particular, we recover the known CLTs for Boolean, monotone, and tensor independence and give sufficient conditions for the non-compactess of the limiting measure. 

Through the entire section, it is assumed that we are given a non-conmutative probability space $(\mathcal{A},\varphi)$ together with a sequence of random variables $(a_i)_{i=1}^\infty$ that are identically distributed with zero mean and unit variance
\footnote{As it is customary in non-commutative probability, the identically distributed assumption can be relaxed to having uniformly bounded moments, i.e., $\sup_{i} |\varphi(a_i^n)| < \infty$ for each integer $n\geq 1$.)}.
Each finite sequence $a_1, a_2, \ldots, a_N$ is assumed to be BMT independent with respect to a digraph $G_N = (V_N,E_N)$ with $V_N = [N]$ and $G_{N-1} \subset G_{N}$. 

\subsection{Central Limit Theorem}

The Central Limit Theorem for BMT independent random variables refers then to determining the limiting distribution of the normalized sum $(a_1+\cdots+a_N)/\sqrt{N}$. 
Concretely, for each integer $k \geq 1$, and up to an error term of order $N^{-1/2}$, we compute the value of 
\[ \varphi \left[ \left( \frac{a_1+\cdots+a_N}{\sqrt{N}}\right)^k \right] .\]

A first step is to single out the monomial containing singletons, as in most of the combinatorial proofs of the CLT.  

\begin{prop}[Singleton condition]\label{prop_singleton_cond} 
	Let $a_1,\ldots,a_N$ be centered BMT independent random variables. Let  $\bm{i}: [m] \to [N]$  be a set of indices. If $\pi = \ker[\bm{i}]$ contains a singleton, then 
$\varphi(a_{i_1}\cdots a_{i_m})=0.$
\end{prop}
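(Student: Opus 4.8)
The plan is to use BMT independence to factor $\varphi(a_{i_1}\cdots a_{i_m})$ along the blocks of $\ker_G[\bm{i}]$, and then to locate a singleton block that survives the refinement from $\ker[\bm{i}]$ to $\ker_G[\bm{i}]$. First I would recall that by Definition~\ref{def: BMT independence} we have
\[
\varphi(a_{i_1}\cdots a_{i_m}) = \prod_{B \in \ker_G[\bm{i}]} \varphi[(a_k)|_B],
\]
so it suffices to exhibit a block $B \in \ker_G[\bm{i}]$ with $|B| = 1$, since then the corresponding factor is $\varphi(a_k) = 0$ by the centering hypothesis, forcing the whole product to vanish.

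Next I would use Remark~\ref{rmk:kernel_g_extension}, which says $\ker_G[\bm{i}]$ is a refinement of $\ker[\bm{i}]$. Let $\{t\}$ be a singleton block of $\pi = \ker[\bm{i}]$, so $t$ is the unique index in $[m]$ with $i_t$ equal to its value; in particular $i_t \neq i_s$ for all $s \neq t$. Because $\ker_G[\bm{i}] \leq \ker[\bm{i}]$, the block of $\ker_G[\bm{i}]$ containing $t$ is contained in the block $\{t\}$ of $\pi$, hence equals $\{t\}$. Thus $\{t\}$ is itself a singleton block of $\ker_G[\bm{i}]$.

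Putting these together: the factor indexed by $B = \{t\}$ in the product above is $\varphi(a_t) = \varphi(a_{i_t})$, which is zero because the $a_i$ are centered (here one also uses that all the $a_i$ are identically distributed, or at least that each has zero mean, as assumed). Since one factor of the product is zero, the entire product vanishes, giving $\varphi(a_{i_1}\cdots a_{i_m}) = 0$. There is essentially no obstacle here — the only point requiring a moment's care is the observation that refinement automatically preserves singletons, which is immediate from the definition of refinement, so the argument is short.
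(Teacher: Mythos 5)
Your argument is correct and follows the same route as the paper: factor $\varphi(a_{i_1}\cdots a_{i_m})$ over the blocks of $\ker_G[\bm{i}]$ via Definition~\ref{def: BMT independence}, observe that a singleton of $\ker[\bm{i}]$ survives as a singleton of the refinement $\ker_G[\bm{i}]$, and conclude that the factor $\varphi(a_{i_t})=0$ annihilates the product. No gaps; this matches the paper's proof essentially verbatim.
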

\begin{proof}
Notice that this follows directly from the definition of BMT independence and the fact that $\varphi(a_i) = 0$ for any $i$. 
Indeed, if $\{k\}$ is a singleton in $\ker[\bm{i}]$, so is in $\ker_G[\bm{i}]$, which is a refinement of the former, and thus 
\[
\varphi(a_{i_1}\cdots a_{i_m})=	\varphi(a_{i_k})\prod_{\substack{V \in \ker_G[ \bm{i} ]\\V\neq\{k\} }} \varphi \bigg( \prod_{k \in V}^{\rightarrow} a_{i_k} \bigg)=0 .
\]
\end{proof}

\begin{prop}\label{prop_cerouno_ver2}
	%
	%Let $a_1,\ldots,a_N$ as in \emph{add ref?}.
	%
	Suppose  $\bm{i}: [2m] \to [N]$ is such that $\pi = \ker[\bm{i}]$ is pair partition in $\mathcal{P}_2(2m)$ and let $G_{\bm{i}}$ denote the independence graph of $a_{i_1},\ldots, a_{i_{2m}}$. 
	Thus, we have 
	\[
	%\varphi_{\pi,{\bm i}}(a_{1},\ldots, a_{N})={\color{red}\prod_{V \in \ker_G[ \bm{i} ] } 
		%\varphi \bigg( \prod_{k \in V}^{\rightarrow} a_{i_k} \bigg)}
	\varphi(a_{i_1}\cdots a_{i_m})=\begin{cases}
		1,\text{ if } G_{\pi(\bm{i})} \subseteq  G_{\bm{i}}, \\
		0 ,\text{ otherwise.} 
	\end{cases}
	\] 
	% 
	%\[\varphi_{\pi,{\bm i}}(a_{1},\ldots, a_{N})= \begin{cases} 1,\text{ if } E_{\pi(\bm{i})} \subseteq  E_{\bm{i}}, \\ 0 ,\text{ otherwise.} \end{cases}\] 
\end{prop}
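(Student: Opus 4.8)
The plan is to reduce the computation of $\varphi(a_{i_1}\cdots a_{i_{2m}})$ to a product over the blocks of $\ker_G[\bm i]$ via BMT independence, and then analyze which pair partitions $\pi = \ker[\bm i]$ survive. First I would invoke Definition \ref{def: BMT independence} to write $\varphi(a_{i_1}\cdots a_{i_{2m}}) = \prod_{V \in \ker_G[\bm i]} \varphi\bigl((a_{i_k})|_V\bigr)$. Since $\ker_G[\bm i]$ is always a refinement of $\ker[\bm i] = \pi$ (Remark \ref{rmk:kernel_g_extension}), every block of $\ker_G[\bm i]$ is contained in some two-element block of $\pi$; hence each block of $\ker_G[\bm i]$ has either one or two elements. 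If $\ker_G[\bm i]$ contains a singleton, then by the singleton argument (Proposition \ref{prop_singleton_cond}, or directly $\varphi(a_i)=0$) the whole product vanishes. So the product is nonzero only if $\ker_G[\bm i] = \ker[\bm i] = \pi$, in which case every factor is $\varphi((a_{i_k})|_V) = \varphi(a_{i_k}^2) = 1$ by the unit-variance assumption (here using that the two indices in a block of $\pi$ map to the same value, so $(a_{i_k})|_V = a_{i_k}^2$), giving the product value $1$.

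The remaining point is therefore purely combinatorial: I must show that $\ker_G[\bm i] = \ker[\bm i]$ holds if and only if $G_{\pi(\bm i)} \subseteq G_{\bm i}$. But this is exactly the content of Lemma \ref{kertopi} applied with the graph $G = G_{\bm i}$ (the restriction of the global graph to the vertices $i_1,\ldots,i_{2m}$, which is all that the equivalence relation defining $\ker_G[\bm i]$ sees, again by Remark \ref{rmk:kernel_g_extension}): the equality $\ker_{G_{\bm i}}[\bm i] = \ker[\bm i]$ holds precisely when $G_{\pi(\bm i)}$ is a subgraph of $G_{\bm i}$. Combining the two cases yields
\[
\varphi(a_{i_1}\cdots a_{i_{2m}}) = \begin{cases} 1, & G_{\pi(\bm i)} \subseteq G_{\bm i}, \\ 0, & \text{otherwise,} \end{cases}
\]
as claimed.

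I do not anticipate a serious obstacle here, since the hard combinatorial work has already been isolated in Lemma \ref{kertopi} and Proposition \ref{prop_evaluation_nest_cross}. The only mild subtlety is the bookkeeping identification between "the independence graph $G_{\bm i}$ of $a_{i_1},\ldots,a_{i_{2m}}$" in the statement and "$G_{\bm i}$, the restriction of $G$ to the vertices $i_1,\ldots,i_{2m}$" as used in Remark \ref{rmk:kernel_g_extension}; one should note these coincide by the definition of BMT independence with respect to a digraph $G$ on the index set. One should also double-check the edge-case $m$ small (e.g. $m=1$, where $\pi$ is a single pair, $G_{\pi(\bm i)}$ is the edgeless graph on one vertex, the condition $G_{\pi(\bm i)} \subseteq G_{\bm i}$ is automatic, and indeed $\varphi(a_{i_1}^2) = 1$), which is consistent with the formula.
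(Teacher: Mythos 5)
Your proposal is correct and follows essentially the same route as the paper's proof: factorize via BMT independence, observe that a strict refinement of a pair partition forces a singleton block (hence a vanishing factor $\varphi(a_{i_k})=0$), note that equality of kernels gives a product of second moments equal to $1$, and translate the condition $\ker_G[\bm i]=\ker[\bm i]$ into $G_{\pi(\bm i)}\subseteq G_{\bm i}$ via Lemma \ref{kertopi}. No gaps.
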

\begin{proof}
	Note first that the condition $ G_{\pi(\bm{i})} \subseteq  G_{\bm{i}}$ is equivalent to $E_{\pi(\bm{i})} \subseteq  E_{\bm{i}}$ since the graphs $G_{\pi(\bm{i})}$ and $ G_{\bm{i}}$ have the same set of vertices. 
	Now, the BMT independence of $a_1,\ldots,a_N$ implies 
	\begin{eqnarray*}
		\varphi(a_{i_1}\cdots a_{i_m})=
		\prod_{V \in \ker_G[ \bm{i} ] } 
		\varphi \bigg( \prod_{k \in V}^{\rightarrow} a_{i_k} \bigg)  
	\end{eqnarray*}
	where $\ker_G[ \bm{i} ]$ is a refinement of $\ker[\bm{i}]$. 
Now, if $\ker_G[ \bm{i} ]\neq \ker[\bm{i}]$,    then $\ker_G[ \bm{i}]$ has a singleton since $\ker[\bm{i}]$ is a pair partition. This in turns implies by Proposition \ref{prop_singleton_cond}, that $\varphi(a_{i_k}) = 0$, since $a_{i_k}$ are assumed to be centered. On the other hand,  if $\ker_G[ \bm{i}]=\ker[\bm{i}]$, we obtain $\varphi(a_{i_1}\cdots a_{i_m}) = \prod_{V \in \ker_G[ \bm{i} ] } 
	\varphi ( \prod_{k \in V}^{\rightarrow} a_{i_k} ) =1$ since $\varphi(a^2_{i_k}) =1$ for each $k$. 
	Finally, the condition $\ker_G[ \bm{i}]=\ker[\bm{i}]$ is equivalent to the condition  $G_{\pi(\bm{i})} \subseteq  G_{\bm{i}}$ by Lemma \ref{kertopi}.
\end{proof}

\begin{thm}[BMT central limit theorem]\label{thm_bmt_clt}
	Suppose $a_1,\ldots,a_N$ are centered variables with unit variance, uniformly bounded  moments of all orders, and independence graph $G_N$. 
	Then for any integer $m\geq 1$ we have 
	\begin{align}\label{momentsCLT}
		\varphi \left(\frac{a_1 + \cdots + a_N}{\sqrt{N}}\right)^m
		=
		\sum_{\pi \in P_2(m)}  \ \ 
		N^{-m/2}
		\sum_{\substack{ {\bm i}:[m]\rightarrow [N] \\ \ker({\bm i})=\pi}}  
		\bm{1}_{G_{\pi(\bm{i})}  \subseteq G_{\bm i}}  
		\quad	+ \quad O(N^{-1/2}) \ .  
	\end{align}
	where $G_\pi$ is the nesting-crossing graph of $\pi$ and $G_{{\bm i}}$ is independence graph fo $a_{i_1},a_{i_2},\ldots,a_{i_{2k}}$ for ${\bm i} = (i_1,i_2,\ldots,i_{m})$. 
\end{thm}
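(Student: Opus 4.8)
The plan is to run the classical moment method, with Propositions \ref{prop_singleton_cond} and \ref{prop_cerouno_ver2} (and, through the latter, Lemma \ref{kertopi}) carrying all the structural content. First I would expand by linearity of $\varphi$,
\[
\varphi\left(\frac{a_1+\cdots+a_N}{\sqrt{N}}\right)^{m} \;=\; N^{-m/2}\sum_{\bm{i}\colon [m]\to [N]}\varphi\bigl(a_{i_1}a_{i_2}\cdots a_{i_m}\bigr),
\]
and regroup the functions $\bm{i}$ according to their kernel $\pi=\ker[\bm{i}]\in P(m)$, so that the right-hand side equals $N^{-m/2}\sum_{\pi\in P(m)}\sum_{\ker[\bm{i}]=\pi}\varphi(a_{i_1}\cdots a_{i_m})$. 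I would then treat the pair partitions $\pi\in P_2(m)$ separately from all the others.

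For a pairing $\pi$, Proposition \ref{prop_cerouno_ver2} gives $\varphi(a_{i_1}\cdots a_{i_m})=\bm{1}_{G_{\pi(\bm{i})}\subseteq G_{\bm{i}}}$ for every $\bm{i}$ with $\ker[\bm{i}]=\pi$ --- this is precisely where the centered, unit-variance assumption enters, ensuring $\varphi(a_v^2)=1$ --- so the pair partitions contribute exactly the leading sum $\sum_{\pi\in P_2(m)}N^{-m/2}\sum_{\ker[\bm{i}]=\pi}\bm{1}_{G_{\pi(\bm{i})}\subseteq G_{\bm{i}}}$ of the statement. It then remains to bound the contribution of the non-pairing $\pi$ by $O(N^{-1/2})$. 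If $\pi$ has a singleton block, every term vanishes by Proposition \ref{prop_singleton_cond}. If all blocks of $\pi$ have size at least two but $\pi$ is not a pairing, then $2\#(\pi)<m$, hence $2\#(\pi)\le m-1$ and $\#(\pi)-\tfrac{m}{2}\le -\tfrac12$.

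To finish that estimate I would count the functions and bound the terms. The number of $\bm{i}$ with $\ker[\bm{i}]=\pi$ is the falling factorial $N(N-1)\cdots(N-\#(\pi)+1)=O(N^{\#(\pi)})$; and $|\varphi(a_{i_1}\cdots a_{i_m})|$ is bounded by a constant depending only on $m$ and the uniform moment bounds, since BMT independence factors it as $\prod_{B\in\ker_G[\bm{i}]}\varphi\bigl(\prod_{k\in B}^{\rightarrow}a_{i_k}\bigr)$, each block $B$ lies inside a block of $\ker[\bm{i}]$ so all its labels share one common index $v$ and $\prod_{k\in B}^{\rightarrow}a_{i_k}=a_v^{|B|}$, each factor $|\varphi(a_v^{|B|})|$ is uniformly bounded by hypothesis, and there are at most $m$ factors. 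Hence each non-pairing $\pi$ contributes $O(N^{\#(\pi)-m/2})=O(N^{-1/2})$, and as $P(m)$ is a fixed finite set (independent of $N$) the total error is $O(N^{-1/2})$. When $m$ is odd, $P_2(m)=\emptyset$ and this accounts for the whole expression.

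Almost all of this is bookkeeping; the genuine content --- the criterion $G_{\pi(\bm{i})}\subseteq G_{\bm{i}}$ picking out the surviving pairings --- has been front-loaded into Propositions \ref{prop_singleton_cond}, \ref{prop_cerouno_ver2} and Lemma \ref{kertopi}, so I do not expect a serious obstacle in the proof itself. The one point that I would take care to make explicit in the write-up is that the implied constant in the $O(N^{-1/2})$ error depends only on $m$ and on the uniform moment bounds, and in particular not on $N$ or on the sequence of graphs $G_N$.
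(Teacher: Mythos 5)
Your proposal is correct and follows essentially the same route as the paper's proof: expand by linearity, group the index tuples by $\ker[\bm{i}]$, kill singleton-containing partitions via Proposition \ref{prop_singleton_cond}, bound the non-pairing contributions by $C^m N^{\#(\pi)-m/2}=O(N^{-1/2})$ using the uniform moment bound and the count of tuples with prescribed kernel, and convert the pairing terms into the indicator $\bm{1}_{G_{\pi(\bm{i})}\subseteq G_{\bm{i}}}$ via Proposition \ref{prop_cerouno_ver2}. The only difference is cosmetic (you treat the pairings before the error terms, the paper does the reverse), and your explicit remark that the implied constant depends only on $m$ and the moment bounds is a welcome clarification.
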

\begin{proof}
	Observe that
	$$\left( \frac{a_1+\cdots+a_N}{\sqrt{N}}\right)^m=\frac{1}{N^{m/2}}\sum\limits_{i_1=1}^{N}\cdots \sum\limits_{i_m=1}^{N} \left(\prod_{k=1}^m a_{i_k}\right).$$
	From the fact that $\bm{i}\sim \bm{j}$ if $\ker[\bm{i}]=\ker[\bm{j}]$ defines an equivalence relation, we can split the set of functions $\bm{i}:[m]\to [N]$ into disjoint sets. Using this and the linearity of $\varphi$, we have that
	$$\varphi\left[\left( \frac{a_1+\cdots+a_N}{\sqrt{N}}\right)^m\right]=\frac{1}{N^{m/2}}\sum\limits_{\pi \in \mathcal{P}(m)} \sum\limits_{\substack{\bm{i}:[m]\to [N] \\ \ker[i]=\pi}} \varphi(a_{i_1}\cdots a_{i_m}).$$
	The variables $(a_i)$ are BMT independent, so we get 
	$$\varphi\left[\left( \frac{a_1+\cdots+a_N}{\sqrt{N}}\right)^m\right]=\frac{1}{N^{m/2}}\sum\limits_{\pi \in \mathcal{P}(m)} \sum\limits_{\substack{\bm{i}:[m]\to [N] \\ \ker[i]=\pi}} \prod_{V\in \ker_G[i]} \varphi((a_{i_k})| k\in V).$$
	From Proposition \ref{prop_singleton_cond}, if $\pi$ contains a singleton, then $\prod_{V\in \ker_G[i]} \varphi((a_{i_k})| k\in V)=0$. 
	So, only partitions $\pi$ with at least two elements per block contribute in the sum above. % will be over , and hence $|\pi|\leq m/2$. 
	Take $C>0$ such that $C\geq \sup_{n\leq m} |\varphi(a_i^n)|$. Since $i_k=i_{k'}$ for any $k,k' \in V$ and $V \in \ker_G[\bm{i}]$, we have that
	$$\prod_{V\in \ker_G[\bm{i}]} | \, \varphi((a_{i_k})|k\in V) \, |\leq C^{|\ker_G[i]|}\leq C^m.$$
	Thus, for any partition $\pi\in \mathcal{P}(m)$, we obtain
	\begin{align*}
		\abs{\sum\limits_{\substack{i:[m]\to [N] \\ \ker[\bm{i}]=\pi}} \prod_{V\in \ker_G[i]} \varphi((a_{i_k})| k\in V) } &\leq C^m\sum\limits_{\substack{\bm{i}:[m]\to [N] \\ \ker[\bm{i}]=\pi}} 1\leq C^m N^{|\pi|}.
	\end{align*}
	This implies 
	\begin{align*}
		N^{-m/2} \sum\limits_{\substack{\pi\in \mathcal{P}(m) \\ |\pi|<m/2}}
		\abs{\sum\limits_{\substack{i:[m]\to [N] \\ \ker[\bm{i}]=\pi}} \prod_{V\in \ker_G[i]} \varphi((a_{i_k})| k\in V) } \leq \sum\limits_{\substack{\pi\in \mathcal{P}_\chi (m) \\ |\pi|<m/2}} C^m N^{|\pi|-m/2}=O(N^{-1/2}).
	\end{align*}
 	Let us denote by $\tilde{\mathcal{P}}(m)$ the set of all partitions $\pi \in \mathcal{P}(m)$ with no singletons.  We have proved that only partitions in  $\tilde{\mathcal{P}}(m)$ give a non-zero contribution and if $|\pi|< m/2$ this contribution is of order  $O(N^{-1/2})$.  Thus noticing that  $|\pi|\leq m/2$ for any $\pi \in \tilde{\mathcal{P}}(m)$ with equality only if $\pi$ is a pair partition we arrive to 
  	\begin{align*}
		\varphi \left(\frac{a_1 + \cdots + a_N}{\sqrt{N}}\right)^m
		=
		\sum_{\pi \in P_2(m)}  \ \ 
		N^{-m/2}
		\sum_{\substack{ {\bm i}:[m]\rightarrow [N] \\ \ker({\bm i})=\pi}}  
			\varphi(a_{i_1}\cdots a_{i_m})	+ \quad O(N^{-1/2}) \ .  
	\end{align*}
Finally, Proposition  \ref{prop_cerouno_ver2} gives 
	\begin{align*}
		\varphi \left(\frac{a_1 + \cdots + a_N}{\sqrt{N}}\right)^m
		=
		\sum_{\pi \in P_2(m)}  \ \ 
		N^{-m/2}
		\sum_{\substack{ {\bm i}:[m]\rightarrow [N] \\ \ker({\bm i})=\pi}}  
		\bm{1}_{G_{\pi(\bm{i})}  \subseteq G_{\bm i}}  
		\quad	+ \quad O(N^{-1/2}) \ .  
	\end{align*}
\end{proof}

\begin{rem} \label{oddmoments}
The set of pairing partitions $P_2(m)$ is empty if $m$ is odd. 
Thus, Theorem \ref{thm_bmt_clt} states that odd moments in the CLT for BMT random variables always vanish as $N\to \infty$, and therefore the limiting distribution must be symmetric if it exists. 

Moreover, the even moments satisfy the Carleman's condition, namely, 
$\sum^\infty_{k=1}m_{2k}^{-1/2k}=+\infty$
where $m_{2k}$ denotes the $2k$-th moment since Theorem \ref{thm_bmt_clt} gives that each $m_{2k}$ is bounded by $|\#P_2(2k)|=1\cdot 3\cdots 
(2k-1)$. 
Consequently, the limiting distribution, if it exists, is determined by moments, and convergence in distribution for BMT central limit theorem is equivalent to convergence in moments.
\end{rem}

%\textcolor{purple}{\textbf{J.8} Polish the next 3 corollaries and the example  \ \\ } 

\begin{cor}[Boolean Central Limit theorem]\label{cor_boolean_ctl}
	If the independence graph $G_N$ of the variables $a_1,a_2,\ldots,a_N$ is the null graph for every integer $N\geq 1$, then $(a_1 + \cdots + a_N)/\sqrt{N}$ converges in moments as $N \to \infty$ to the Bernoulli distribution $(\delta_{-1} + \delta_{+1})/2$.  
\end{cor}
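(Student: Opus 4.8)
The plan is to specialize the BMT central limit theorem, Theorem \ref{thm_bmt_clt}, to the case at hand and then isolate the single pairing that survives. Since the independence graph $G_{\bm i}$ of $a_{i_1},\dots,a_{i_m}$ is the restriction of $G_N$ to the vertices $i_1,\dots,i_m$, the hypothesis that every $G_N$ is the empty graph forces $G_{\bm i}$ to be empty as well. Hence in \eqref{momentsCLT} the indicator $\bm{1}_{G_{\pi(\bm{i})}\subseteq G_{\bm i}}$ equals $1$ if and only if $G_{\pi(\bm{i})}$ has no edges, which by the relabeling defining $G_{\pi(\bm i)}$ is equivalent to the nesting-crossing graph $G_\pi$ having no edges.

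The first step is the combinatorial observation that, for $\pi\in P_2(m)$, the graph $G_\pi$ is edgeless if and only if $m$ is even, say $m=2k$, and $\pi$ is the consecutive interval pairing $\pi_0=\{\{1,2\},\{3,4\},\dots,\{2k-1,2k\}\}$. Indeed, by the remark following the definition of the nesting-crossing graph, $(B,C)\in E_\pi$ exactly when there is $\ell\in B$ with $\min C<\ell<\max C$; so $G_\pi$ is edgeless precisely when no block of $\pi$ contains an element strictly between the minimum and maximum of another block, and for a pairing this holds only for $\pi_0$. When $m$ is odd the set $P_2(m)$ is empty, so \eqref{momentsCLT} gives directly $\varphi\big((a_1+\cdots+a_N)/\sqrt N\big)^m=O(N^{-1/2})\to 0$.

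For $m=2k$, Theorem \ref{thm_bmt_clt} then reduces to
\[
\varphi\!\left(\frac{a_1+\cdots+a_N}{\sqrt N}\right)^{2k}
=N^{-k}\!\!\sum_{\substack{\bm{i}:[2k]\to[N]\\ \ker[\bm{i}]=\pi_0}}\!\! 1 \ + \ O(N^{-1/2}).
\]
The number of functions $\bm i$ with $\ker[\bm i]=\pi_0$ is the number of assignments of pairwise distinct values in $[N]$ to the $k$ blocks of $\pi_0$, namely the falling factorial $N(N-1)\cdots(N-k+1)=N^k+O(N^{k-1})$. Therefore the $2k$-th moment converges to $1$ and the $(2k+1)$-th moment converges to $0$ as $N\to\infty$.

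Finally I would identify the limit: the moments of $\mu=\tfrac12(\delta_{-1}+\delta_{+1})$ are $\int t^{2k}\,d\mu=1$ and $\int t^{2k+1}\,d\mu=0$, matching the limiting moments just computed. Since these moments satisfy Carleman's condition (Remark \ref{oddmoments}), $\mu$ is the unique probability measure with these moments, so $(a_1+\cdots+a_N)/\sqrt N$ converges in moments to $\mu$, and this is equivalent to convergence in distribution. The only point needing genuine care is the characterization in the second step — that among all pairings only the consecutive interval pairing has an edgeless nesting-crossing graph — but this follows immediately from the stated description of $E_\pi$.
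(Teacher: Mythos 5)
Your proposal is correct and follows essentially the same route as the paper's own proof: specialize Theorem \ref{thm_bmt_clt}, note that an edgeless $G_{\bm i}$ forces the indicator to vanish unless the nesting-crossing graph $G_{\pi(\bm i)}$ is edgeless, observe that for a pairing this singles out the interval pairing $\{\{1,2\},\dots,\{2k-1,2k\}\}$, and count the $N(N-1)\cdots(N-k+1)$ admissible index tuples. The extra details you supply (the explicit characterization of edgeless $G_\pi$ via the description of $E_\pi$, and the moment identification of the Bernoulli law) are correct and only make explicit what the paper leaves implicit.
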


\begin{proof}
	By Remark \ref{oddmoments} is is enough to consider even moments, so let $m=2k$. For all $\bm{i}:[m]\to [N]$, $G_{\mathbf{i}}$ is a graph with no edges. So,  $\mathbf{1}_{G_{\pi(\bm{i})} \subseteq G_{\bm{i}}}$ is $1$ only  when $G_{\pi(\bm{i})}$ has no edges, i.e. only when $\pi=\{\{1,2\},\ldots, \{2k-1,2k\}\}$. Thus,
	\begin{align*}
		\lim_{N\to \infty}\varphi\left( \left(\frac{a_1+\cdots+a_N}{\sqrt{N}} \right)^{2k} \right)&=\lim_{N\to \infty}\frac{1}{N^{k}} \sum\limits_{\underset{\ker[i]=\{\{1,2\},\ldots, \{2k-1,2k\}\}}{\bm{i}: [m] \to [N]}} 1\\
		&=\lim_{N\to \infty}\frac{1}{N^{k}} N(N-1)\cdots (N-k+1)=1.
	\end{align*}
\end{proof}

\begin{cor}[Tensor Central Limit theorem]\label{cor_tensor_ctl}
	If the independence graph $G_N$ of the variables $a_1,a_2,\ldots,a_N$ is the complete graph,  then $(a_1 + \cdots + a_N)/\sqrt{N}$ converges in moments as $N \to \infty$ to the Gaussian distribution $\frac{1}{\sqrt{2 \pi}}  \exp(-t^2 / 2 )  \, dt$. 
\end{cor}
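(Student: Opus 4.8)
The plan is to invoke the BMT central limit theorem, Theorem \ref{thm_bmt_clt}, and to observe that the indicator $\bm{1}_{G_{\pi(\bm{i})} \subseteq G_{\bm i}}$ appearing there is identically $1$ in the tensor case. Indeed, when $G_N$ is the complete digraph, the restriction $G_{\bm i}$ of $G_N$ to the vertices $i_1,\ldots,i_m$ is again a complete digraph on those vertices, and since $G_{\pi(\bm i)}$ has exactly the vertex set $\{i_1,\ldots,i_m\}$, it is automatically a subgraph of $G_{\bm i}$. This is the one point worth stating carefully, as it is precisely what distinguishes this computation from the Boolean case (Corollary \ref{cor_boolean_ctl}), where the indicator forces $\pi$ to be the unique non-crossing, non-nesting pairing. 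By Remark \ref{oddmoments} it suffices to treat even moments, so I would fix $m = 2k$.

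With the indicator removed, Theorem \ref{thm_bmt_clt} gives
\[
\varphi\!\left(\frac{a_1+\cdots+a_N}{\sqrt{N}}\right)^{2k}
= N^{-k} \sum_{\pi \in P_2(2k)} \#\bigl\{\bm i : [2k] \to [N] \,\bigm|\, \ker[\bm i] = \pi \bigr\} + O(N^{-1/2}).
\]
Each pairing $\pi \in P_2(2k)$ has exactly $k$ blocks, and a function $\bm i$ satisfies $\ker[\bm i] = \pi$ precisely when it is constant on each block of $\pi$ and takes distinct values on distinct blocks; there are $N(N-1)\cdots(N-k+1)$ such functions. Dividing by $N^k$ and letting $N \to \infty$, each pairing contributes $1$, so the limit equals $\#P_2(2k) = (2k-1)!! = 1 \cdot 3 \cdots (2k-1)$, while the odd moments vanish.

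Finally, I would recognize $(2k-1)!!$ as the $2k$-th moment of the standard Gaussian $\frac{1}{\sqrt{2\pi}} e^{-t^2/2}\,dt$. Since the Gaussian is determined by its moments (Carleman's condition, cf. Remark \ref{oddmoments}), convergence in moments is equivalent to convergence in distribution, which completes the argument. There is no genuine obstacle here: the proof reduces to the elementary facts that a complete digraph contains every digraph on a subset of its vertices and that the number of functions with a prescribed kernel is a falling factorial; the only substantive observation is the collapse of the indicator to $1$.
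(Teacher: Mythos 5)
Your proposal is correct and follows essentially the same route as the paper: both apply Theorem \ref{thm_bmt_clt}, note that the indicator $\bm{1}_{G_{\pi(\bm{i})} \subseteq G_{\bm i}}$ is identically $1$ because $G_{\bm i}$ is complete, and count the $N(N-1)\cdots(N-k+1)$ functions with a prescribed pairing kernel to obtain $\#P_2(2k)=(2k-1)!!$ in the limit. Your added remarks identifying this as the Gaussian moment sequence and invoking moment determinacy are fine and only make explicit what the paper leaves implicit.
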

\begin{proof}
Again, we only consider $m=2k$.  Now, for all $\bm{i}:[m]\to [N]$, $G_{\mathbf{i}}$ is a complete graph. So,  $\mathbf{1}_{G_{\pi(\bm{i})} \subseteq G_{\mathbf{i}}}$ is one for all $\pi \in \mathcal{P}_2(2k)$. Thus,
	\begin{align*}
		\lim_{N\to \infty}\varphi\left( \left(\frac{a_1+\cdots+a_N}{\sqrt{N}} \right)^{2k} \right)&=\lim_{N\to \infty}\frac{1}{N^{k}}\sum\limits_{\pi \in \mathcal{P}_2(2k)} \sum\limits_{\underset{\ker[\bm{i}]=\pi}{\bm{i}:[m]\to [N]}} 1\\
		&=\lim_{N\to \infty}\frac{1}{N^{k}} N(N-1)\cdots (N-k+1) \# \mathcal{P}_2(2k)\\
		&=\# \mathcal{P}_2(2k).
	\end{align*}
\end{proof}

\begin{cor}[Monotone Central Limit theorem]\label{cor_monotone_ctl}
	If the independence graph $G_N$ of the variables $a_1,a_2,\ldots,a_N$ has edge set $E_{N} = \{ (j,i) \in [N]^2 : i < j\}$, then $(a_1 + \cdots + a_N)/\sqrt{N}$ converges in moments as $N \to \infty$ to the arcsine distribution $\frac{1}{2 \pi}  \sqrt{4-t^2} \, dt$. 
\end{cor}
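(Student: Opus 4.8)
The plan is to read off the limiting even moments from Theorem~\ref{thm_bmt_clt} and recognise them as the moments of the arcsine law, the combinatorics being controlled by non-crossing pairings together with the nesting structure forced by the monotone graph.

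By Remark~\ref{oddmoments} only $m=2k$ needs attention. I would first observe that, since $E_N=\{(j,i):i<j\}$, for any $\bm i\colon[2k]\to[N]$ the restricted graph $G_{\bm i}$ is the digraph on the distinct values of $\bm i$ with an edge from $v$ to $w$ exactly when $w<v$; in particular $G_{\bm i}$ has no $2$-cycle. As $(B,C)$ and $(C,B)$ both belong to $E_\pi$ whenever two blocks $B,C$ cross, the indicator $\bm 1_{G_{\pi(\bm i)}\subseteq G_{\bm i}}$ vanishes unless $\pi$ is non-crossing; and when $\pi$ is a non-crossing pairing the only edges of $G_\pi$ come from nestings, so $G_{\pi(\bm i)}\subseteq G_{\bm i}$ holds precisely when the common $\bm i$-value on a nested (inner) block is larger than the value on the block containing it. Writing $Q_\pi$ for the partial order on the blocks of $\pi$ in which an inner block sits above the block it is nested in, this says exactly that the block-to-value assignment is an order-preserving injection of $Q_\pi$ into the chain $[N]$.

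Consequently, for a non-crossing pairing $\pi$ with $k$ blocks, the number of $\bm i$ with $\ker[\bm i]=\pi$ counted by Theorem~\ref{thm_bmt_clt} equals $\binom{N}{k}\,e(Q_\pi)$, where $e(Q_\pi)$ is the number of linear extensions of $Q_\pi$ (choose the $k$ values, then an order-preserving bijection onto that chain). Dividing by $N^k$ and letting $N\to\infty$ I would get
\[
\lim_{N\to\infty}\varphi\!\left(\frac{a_1+\cdots+a_N}{\sqrt N}\right)^{2k}=\frac{1}{k!}\sum_{\pi}e(Q_\pi),
\]
the sum over non-crossing pairings $\pi$ of $[2k]$. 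It then remains to evaluate $f(k):=\sum_{\pi}e(Q_\pi)$. I would split a non-crossing pairing at the position $2j$ matched with $1$: the $j-1$ blocks inside $\{1,2j\}$ and the $k-j$ blocks to its right form non-crossing pairings of intervals of sizes $2j-2$ and $2k-2j$, and in $Q_\pi$ the block $\{1,2j\}$ is a global minimum of the first group and is incomparable to the second. Using that adjoining a minimum does not change the number of linear extensions and that $e(A\sqcup B)=\binom{|A|+|B|}{|A|}e(A)e(B)$, this gives $f(k)=\sum_{j=1}^{k}\binom{k}{j}f(j-1)f(k-j)$ with $f(0)=1$. For $F(x)=\sum_{k\ge0}f(k)x^k/k!$ the recursion becomes $F=1+\big(\int_0^x F\big)F$; setting $H=\int_0^x F$ yields $H'(1-H)=1$, hence $H-H^2/2=x$, so $H=1-\sqrt{1-2x}$ and $F=(1-2x)^{-1/2}=\sum_{k\ge0}\binom{2k}{k}\frac{x^k}{2^k}$. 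Thus the limiting $2k$-th moment equals $\binom{2k}{k}/2^k$, which is exactly the $2k$-th moment of the arcsine distribution, and by Remark~\ref{oddmoments} convergence of moments is convergence in distribution.

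The hard part will be the last step, the evaluation of $f(k)$: one has to hit on the recursive decomposition of a non-crossing pairing for which the nesting poset factors cleanly, and then recognise the resulting functional equation for $F$. If one prefers to avoid the generating-function manipulation, the same identity can be reached by matching this combinatorial model---non-crossing pairings weighted by the number of linear extensions of their nesting posets---with Muraki's combinatorial proof of the monotone central limit theorem \cite{Muraki2003}.
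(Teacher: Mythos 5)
Your argument is correct, and its first half coincides with the paper's proof: both reduce to even moments via Remark \ref{oddmoments}, kill crossing pairings by observing that a crossing forces a double edge in $G_{\pi(\bm i)}$ while the total-order digraph $G_{\bm i}$ has none, and then count the surviving indices for a non-crossing pairing $\pi$ as $\binom{N}{k}$ times the number of order-preserving labelings of the nesting poset, giving the limit $\frac{1}{k!}\sum_{\pi} e(Q_\pi)$ (the paper writes $M(\pi)$ for your $e(Q_\pi)$). Where you diverge is the final evaluation: the paper stops at $\frac{1}{k!}\,\#\mathcal{M}_2(2k)$, identifying the sum with the count of monotone pairings and citing the literature for the fact that these are the arcsine moments, whereas you compute the sum from scratch via the decomposition at the pair containing $1$, the recursion $f(k)=\sum_j\binom{k}{j}f(j-1)f(k-j)$, and the ODE $H'(1-H)=1$, arriving at $m_{2k}=\binom{2k}{k}/2^k$ explicitly. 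Your recursion is sound (every block of the inner pairing is nested inside $\{1,2j\}$, so that block is a global minimum of the first factor and the shuffle formula for linear extensions of a disjoint union applies), so your version is self-contained where the paper's is by reference; the cost is the extra generating-function work. One peripheral remark: the density $\frac{1}{2\pi}\sqrt{4-t^2}\,dt$ printed in the statement is the semicircle, not the arcsine, law; your moments $\binom{2k}{k}/2^k$ are those of the arcsine distribution on $[-\sqrt2,\sqrt2]$, which is the correct monotone CLT limit, so the discrepancy is a typo in the corollary's statement rather than a flaw in your proof.
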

\begin{proof}
	
	Let $\pi\in\mathcal{P}_2(2k)$, and $G_\pi$ be its nesting-crossing graph.
	Note that if $\pi$ has a crossing, $G_{\mathbf{i}}$ cannot be a subgraph of $G_\pi$ for all $\mathbf{i}$, since $G_{\mathbf{i}}$ has no double edges while $G_\pi$ does.

	So let $\pi\in\mathcal{NC}_2(2k)$, with blocks $b_1,\dots,b_k$. The nesting-crossing graph is just a nesting graph. In this case in order that $G_\pi\subset G_{\mathbf{i}}$  we need that $i_l<j_m$ whenever $b_l$ is nested with $b_m$. To count such indices we such a unorder subset of indices of size $k$, and count the number of ordering in the indices which satify this condition.  Thus the cardinality of the set 
	$\{i\in[N]_\pi^k|G_\pi\subset G_{\mathbf{i}}\}$ equals $\binom{N}{k}*M(\pi)$, where $M(\pi)$ is the number of labellings $L:\pi\to[n]$ such $L(b_i)\leq L(b_j)$ if $b_i$ is nested in $b_j$.
	
	In the limit for a partition $\pi\in\mathcal{NC}_2(2k)$ we get that 
	$$ \lim_{N\to \infty}\frac{1}{N^{k}}\sum\limits_{\substack{\bm{i}:[m]\to [N]\\ \ker[\bm{i}]=\pi}} \mathbf{1}_{G_\pi\subseteq G_{\mathbf{i}}}=\lim_{N\to \infty}\frac{1}{N^{k}}\binom{N}{k} M(\pi)=\frac{M(\pi)}{k!}. $$
	Thus summing over all pair partitions we get
	\begin{align*}
		\lim_{N\to \infty}\varphi\left( \left(\frac{a_1+\cdots+a_N}{\sqrt{N}} \right)^{2k} \right)&=\sum\limits_{\pi \in \mathcal{NC}_2(2k)}  \lim_{N\to \infty}\frac{1}{N^{k}}\sum\limits_{\substack{\bm{i}:[m]\to [N]\\ \ker[\bm{i}]=\pi}}  \mathbf{1}_{G_\pi\subseteq G_{\mathbf{i}}}\\&=\sum\limits_{\pi \in \mathcal{NC}_2(2k)}   \frac{M(\pi)}{k!}\\
		&= \frac{1}{k!}.\# \mathcal{M}_2(2k) 
	\end{align*}
 where $\mathcal{M}_2(2k)$ denote the set of monotone pairings, see   \cite{AHLV,LeSa06,HS}.

\end{proof}

Not all sequences of graphs lead to a limiting distribution, even if $G_{n-1}\subset G_n$ for all $n$. We present and example to show this.

\begin{exa}
	Consider the following sequence of graphs:\begin{itemize} \item $G_0=(\{1\},\emptyset)$. \item For all $n$, $G_{2n}=G_{2n-1}\cup \tilde G_{2n-1}$, where $\tilde G_{2n-1}$ is a copy of $G_{2n-1}$ 
		\item   For all $n$, $G_{2n+1}=\hat H_{2n+1}$, $H_{2n+1}=G_{2n}\cup \tilde G_{2n}$,   where $\hat H_{2n+1}$ is the graph $H_{2n+1}$ where we add the edges between the vertices of  $G_{2n}$ and $\tilde G_{2n}$.
		
	\end{itemize}
	The first of these graphs are shown in Figure \ref{counterCLT}.

\begin{figure}%[h]
\begin{center}
\begin{tikzpicture}

%G0

\node[circle,draw=black, fill=black, minimum size = 0.2cm, inner sep=0pt] (c) at (-1,2.4){};

\node (d) at (-1,-.5){$G_0$};

%G1

\node[circle,draw=black, fill=black, minimum size = 0.2cm, inner sep=0pt] (c) at (.5,1){};

\node (d) at (.5,-.5){$G_1$};

\node[circle,draw=black, fill=black, minimum size = 0.2cm, inner sep=0pt] (c) at (.5,4){};

\draw [line width=1pt, black]  (.5,1) -- (.5,4) ;

%G2

\node (d) at (2.5,-.5){$G_2$};

\node[circle,draw=black, fill=black, minimum size = 0.2cm, inner sep=0pt] (c) at (2.5,0){};

\node[circle,draw=black, fill=black, minimum size = 0.2cm, inner sep=0pt] (c) at (2.5,2){};

\node[circle,draw=black, fill=black, minimum size = 0.2cm, inner sep=0pt] (c) at (2.5,3){};

\node[circle,draw=black, fill=black, minimum size = 0.2cm, inner sep=0pt] (c) at (2.5,5){};

\draw [line width=1pt, black]  (2.5,0) -- (2.5,2) ;
\draw [line width=1pt, black]  (2.5,3) -- (2.5,5) ;

%G3

\node (d) at (5,-.5){$G_3$};

\node[circle,draw=black, fill=black, minimum size = 0.2cm, inner sep=0pt] (c) at (6,0){};
\node[circle,draw=black, fill=black, minimum size = 0.2cm, inner sep=0pt] (c) at (6,2){};
\node[circle,draw=black, fill=black, minimum size = 0.2cm, inner sep=0pt] (c) at (6,3){};
\node[circle,draw=black, fill=black, minimum size = 0.2cm, inner sep=0pt] (c) at (6,5){};

\node[circle,draw=black, fill=black, minimum size = 0.2cm, inner sep=0pt] (c) at (4,0){};
\node[circle,draw=black, fill=black, minimum size = 0.2cm, inner sep=0pt] (c) at (4,2){};
\node[circle,draw=black, fill=black, minimum size = 0.2cm, inner sep=0pt] (c) at (4,3){};
\node[circle,draw=black, fill=black, minimum size = 0.2cm, inner sep=0pt] (c) at (4,5){};

\draw [line width=1pt, black]  (6,0) -- (6,2) ;
\draw [line width=1pt, black]  (6,3) -- (6,5) ;
\draw [line width=1pt, black]  (4,0) -- (4,2) ;
\draw [line width=1pt, black]  (4,3) -- (4,5) ;
\draw [line width=1pt, black]  (6,0) -- (4,2) ;
\draw [line width=1pt, black]  (6,3) -- (4,5) ;
\draw [line width=1pt, black]  (4,0) -- (6,2) ;
\draw [line width=1pt, black]  (4,3) -- (6,5) ;
\draw [line width=1pt, black]  (6,0) -- (4,0) ;
\draw [line width=1pt, black]  (6,2) -- (4,2) ;
\draw [line width=1pt, black]  (6,3) -- (4,3) ;
\draw [line width=1pt, black]  (6,5) -- (4,5) ;
\draw [line width=1pt, black]  (6,0) -- (4,5) ;
\draw [line width=1pt, black]  (6,2) -- (4,3) ;
\draw [line width=1pt, black]  (6,2) -- (4,5) ;
\draw [line width=1pt, black]  (6,0) -- (4,3) ;
\draw [line width=1pt, black]  (6,5) -- (4,2) ;
\draw [line width=1pt, black]  (6,5) -- (4,0) ;
\draw [line width=1pt, black]  (6,3) -- (4,0) ;
\draw [line width=1pt, black]  (6,3) -- (4,2) ;

%G4

\node (d) at (9.5,-.5){$G_4$};

\node[circle,draw=black, fill=black, minimum size = 0.2cm, inner sep=0pt] (c) at (7.5,0){};
\node[circle,draw=black, fill=black, minimum size = 0.2cm, inner sep=0pt] (c) at (7.5,2){};
\node[circle,draw=black, fill=black, minimum size = 0.2cm, inner sep=0pt] (c) at (7.5,3){};
\node[circle,draw=black, fill=black, minimum size = 0.2cm, inner sep=0pt] (c) at (7.5,5){};

\node[circle,draw=black, fill=black, minimum size = 0.2cm, inner sep=0pt] (c) at (9,0){};
\node[circle,draw=black, fill=black, minimum size = 0.2cm, inner sep=0pt] (c) at (9,2){};
\node[circle,draw=black, fill=black, minimum size = 0.2cm, inner sep=0pt] (c) at (9,3){};
\node[circle,draw=black, fill=black, minimum size = 0.2cm, inner sep=0pt] (c) at (9,5){};

\node[circle,draw=black, fill=black, minimum size = 0.2cm, inner sep=0pt] (c) at (10,0){};
\node[circle,draw=black, fill=black, minimum size = 0.2cm, inner sep=0pt] (c) at (10,2){};
\node[circle,draw=black, fill=black, minimum size = 0.2cm, inner sep=0pt] (c) at (10,3){};
\node[circle,draw=black, fill=black, minimum size = 0.2cm, inner sep=0pt] (c) at (10,5){};

\node[circle,draw=black, fill=black, minimum size = 0.2cm, inner sep=0pt] (c) at (11.5,0){};
\node[circle,draw=black, fill=black, minimum size = 0.2cm, inner sep=0pt] (c) at (11.5,2){};
\node[circle,draw=black, fill=black, minimum size = 0.2cm, inner sep=0pt] (c) at (11.5,3){};
\node[circle,draw=black, fill=black, minimum size = 0.2cm, inner sep=0pt] (c) at (11.5,5){};

\draw [line width=1pt, black]  (7.5,0) -- (7.5,2) ;
\draw [line width=1pt, black]  (7.5,3) -- (7.5,5) ;
\draw [line width=1pt, black]  (9,0) -- (9,2) ;
\draw [line width=1pt, black]  (9,3) -- (9,5) ;
\draw [line width=1pt, black]  (7.5,0) -- (9,2) ;
\draw [line width=1pt, black]  (7.5,3) -- (9,5) ;
\draw [line width=1pt, black]  (9,0) -- (7.5,2) ;
\draw [line width=1pt, black]  (9,3) -- (7.5,5) ;
\draw [line width=1pt, black]  (7.5,0) -- (9,0) ;
\draw [line width=1pt, black]  (7.5,2) -- (9,2) ;
\draw [line width=1pt, black]  (7.5,3) -- (9,3) ;
\draw [line width=1pt, black]  (7.5,5) -- (9,5) ;
\draw [line width=1pt, black]  (7.5,0) -- (9,5) ;
\draw [line width=1pt, black]  (7.5,2) -- (9,3) ;
\draw [line width=1pt, black]  (7.5,2) -- (9,5) ;
\draw [line width=1pt, black]  (7.5,0) -- (9,3) ;

\draw [line width=1pt, black]  (7.5,5) -- (9,2) ;
\draw [line width=1pt, black]  (7.5,5) -- (9,0) ;

\draw [line width=1pt, black]  (7.5,3) -- (9,0) ;
\draw [line width=1pt, black]  (7.5,3) -- (9,2) ;

\draw [line width=1pt, black]  (10,0) -- (10,2) ;
\draw [line width=1pt, black]  (10,3) -- (10,5) ;
\draw [line width=1pt, black]  (11.5,0) -- (11.5,2) ;
\draw [line width=1pt, black]  (11.5,3) -- (11.5,5) ;
\draw [line width=1pt, black]  (10,0) -- (11.5,2) ;
\draw [line width=1pt, black]  (10,3) -- (11.5,5) ;
\draw [line width=1pt, black]  (11.5,0) -- (10,2) ;
\draw [line width=1pt, black]  (11.5,3) -- (10,5) ;
\draw [line width=1pt, black]  (10,0) -- (11.5,0) ;
\draw [line width=1pt, black]  (10,2) -- (11.5,2) ;
\draw [line width=1pt, black]  (10,3) -- (11.5,3) ;
\draw [line width=1pt, black]  (10,5) -- (11.5,5) ;
\draw [line width=1pt, black]  (10,0) -- (11.5,5) ;
\draw [line width=1pt, black]  (10,2) -- (11.5,3) ;
\draw [line width=1pt, black]  (10,2) -- (11.5,5) ;
\draw [line width=1pt, black]  (10,0) -- (11.5,3) ;

\draw [line width=1pt, black]  (10,5) -- (11.5,2) ;
\draw [line width=1pt, black]  (10,5) -- (11.5,0) ;

\draw [line width=1pt, black]  (10,3) -- (11.5,0) ;
\draw [line width=1pt, black]  (10,3) -- (11.5,2) ;

\end{tikzpicture}

\end{center}
\caption{ \label{counterCLT} The sequence of graphs $G_i$, which do not satisfy a CLT.}
\end{figure}
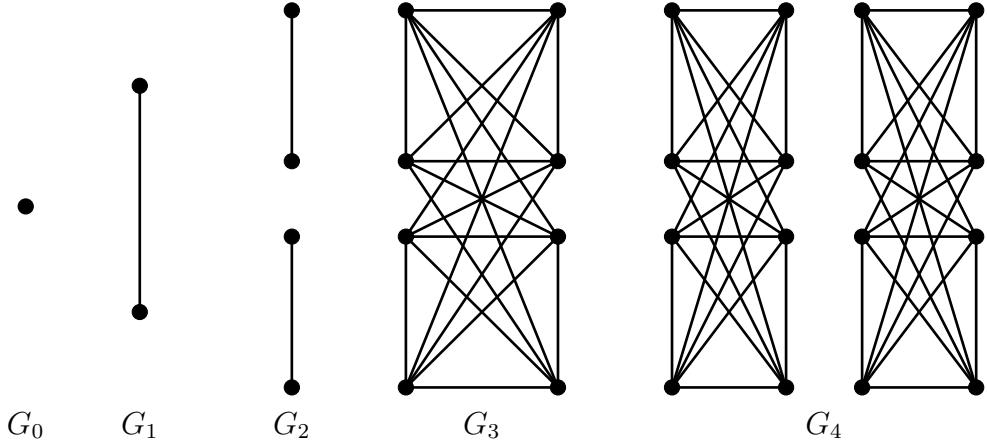

	Now we consider where Bernoulli variables, $(X_i)^\infty_{i=1}$, and suppose that $X_1,\dots,X_{2^n}$ are BMT independent with respect to $G_n$; this is consistent by the construction of $G_n$.
	We are interested in the normalized sum $$Y_n=\frac{X_1+\cdots+X_{2^n}}{2^{n/2}}.$$It is easy to calculate the first three moments
	\begin{eqnarray}
		\phi(Y_n)=0, \quad
		\phi(Y_n^2)=1,\quad
		\phi(Y_n^3)=0.
	\end{eqnarray}
	For the fourth moment we calculate a recursion for $n$ odd or even.
	\begin{eqnarray*}\phi(Y_{2n}^4)&=\frac{1}{4}(\phi((Y_{2n-1})^4)+\phi(({Y}_{2n-1})^4)+2\phi((Y_{2n-1})^2))=\frac{1}{2}(\phi((Y_{2n-1})^4)+1),\\
		\phi(Y_{2n+1}^4)&=\frac{1}{4}(\phi((Y_{n})^4)+\phi(({Y}_{n-1})^4)+6\phi((Y_{n})^2))=\frac{1}{2}(\phi((Y_{2n})^4)+3).
	\end{eqnarray*}
	From where, $\phi(Y_{2n+2}^4)=\frac{1}{4}\phi(Y_{2n}^4)+\frac{5}{4}$
	and
	$\phi(Y_{2n+2}^4)=\frac{1}{4}\phi(Y_{2n}^4)+\frac{7}{4}.$
	Hence, one sees that $$\phi(Y_{2n}^4)\to 5/3\text{ and }\phi(Y_{2n+1}^4)\to 7/3.$$

 Thus we do not have convergence in moments and neither convergence in distribution, by Remark \ref{oddmoments}.

\end{exa}

\subsection{Further properties of CLT}

We have observed some properties of BMT central limit theorem such as determinacy of moments and symmetry.  Here, we want to consider also some properties in relation with the associated independence graphs. Firstly, we show that if two graphs differ by a small number of edges then they provide the same limiting distribution for the CLT. Secondly, we consider the boundedness of support in the limiting distribution and finally, we provide a description and examples on how consistency is reflected into Boolean, tensor or monotone convolutions.

\begin{thm}[Perturbation]\label{thm_perturbation}
	Let $a_1,\ldots,a_N,b_1,\ldots,b_N$ be centered variables with unit variance and uniformly bounded moments of all orders. 
	Suppose $G_N$ and $H_N$ are the independence graphs of $a_1,\ldots,a_N$ and $b_1,\ldots,b_N$, respectively. 
	If the symmetric difference of $G_N$ and $H_N$ has order $o(N^2)$, then 
	\[
	\lim_{N \to \infty} \left[ \varphi	\left( \frac{a_1 + \cdots + a_N}{\sqrt{N}} \right)^m - 
	\varphi	\left( \frac{b_1 + \cdots + b_N}{\sqrt{N}} \right)^m  \right]
	=  0 
	\qquad \forall \ m \geq 1.
	\]
	%for any $m \geq 1$. 
	%
	Consequently, $(a_1 + \cdots + a_N)/\sqrt{N}$ has a limiting distribution if and only if $(b_1 + \cdots + b_N)/\sqrt{N}$ does; moreover, the two limiting distributions coincide if any of them exists. 
\end{thm}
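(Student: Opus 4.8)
The plan is to plug the moment expansion of the BMT central limit theorem (Theorem~\ref{thm_bmt_clt}) into both normalized sums and subtract, reducing the whole statement to a count of the index tuples on which the two expansions disagree. For $m$ odd there is nothing to prove: $P_2(m)=\emptyset$, so both moments are $O(N^{-1/2})$ by Theorem~\ref{thm_bmt_clt} and their difference tends to $0$. So I would fix $m=2k$, in which case Theorem~\ref{thm_bmt_clt} yields
\[
\varphi\!\left(\tfrac{a_1+\cdots+a_N}{\sqrt N}\right)^{2k}
-\varphi\!\left(\tfrac{b_1+\cdots+b_N}{\sqrt N}\right)^{2k}
= N^{-k}\sum_{\pi\in P_2(2k)}\ \sum_{\substack{\bm{i}:[2k]\to[N]\\ \ker(\bm{i})=\pi}}
\bigl(\bm{1}_{G_{\pi(\bm{i})}\subseteq G_{\bm{i}}}-\bm{1}_{G_{\pi(\bm{i})}\subseteq H_{\bm{i}}}\bigr)
+O(N^{-1/2}),
\]
and the task is to show the leading term is $o(1)$.

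The key structural fact I would isolate first is that $G_{\bm{i}}$ and $H_{\bm{i}}$ are the restrictions of $G_N$ and $H_N$, respectively, to the vertex set $\{i_1,\dots,i_{2k}\}$ (as in Remark~\ref{rmk:kernel_g_extension}), whereas $G_{\pi(\bm{i})}$ depends only on $\pi$ and the labeling and not on the independence graphs. Consequently the summand $\bm{1}_{G_{\pi(\bm{i})}\subseteq G_{\bm{i}}}-\bm{1}_{G_{\pi(\bm{i})}\subseteq H_{\bm{i}}}$ vanishes whenever $G_N$ and $H_N$ agree on $\{i_1,\dots,i_{2k}\}$; hence a nonzero summand forces $\{i_1,\dots,i_{2k}\}$ to contain both endpoints of some directed edge belonging to the symmetric difference $E(G_N)\triangle E(H_N)$, and moreover every summand is bounded by $1$ in absolute value.

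Then it is a routine count. Write $D_N:=|E(G_N)\triangle E(H_N)|$, which is $o(N^2)$ by hypothesis. For a fixed pairing $\pi\in P_2(2k)$ a function $\bm{i}$ with $\ker(\bm{i})=\pi$ is the same as an injective placement of the $k$ blocks of $\pi$ into $[N]$; the number of such placements whose image contains both endpoints of a prescribed edge of $E(G_N)\triangle E(H_N)$ is at most $k(k-1)N^{k-2}$, so a union bound over the $D_N$ edges leaves at most $k(k-1)\,N^{k-2}\,D_N=o(N^k)$ ``bad'' tuples for each $\pi$. Summing over the constantly many pairings in $P_2(2k)$ and multiplying by $N^{-k}$ makes the leading term $o(1)$; together with the $O(N^{-1/2})$ error this proves the displayed limit.

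For the ``consequently'' clause I would combine this limit with Remark~\ref{oddmoments}: the limit shows the two moment sequences are asymptotically equal term by term, so one converges in moments exactly when the other does and the limiting moment sequences coincide; since any such limit is symmetric and determined by its moments (its even moments satisfy Carleman's condition), convergence in moments is equivalent to convergence in distribution, and the two limiting laws agree whenever they exist. The only point needing genuine care is the structural fact in the second paragraph --- verifying that the indicator really only depends on the induced subgraph of the independence graph on $\{i_1,\dots,i_{2k}\}$ --- and the elementary bookkeeping of the count; beyond that I expect no real obstacle.
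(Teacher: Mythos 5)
Your proposal is correct and follows essentially the same route as the paper's proof: both reduce via Theorem \ref{thm_bmt_clt} to the leading pairing sum, observe that the indicator difference can be nonzero only when the image of $\bm{i}$ contains both endpoints of an edge of the symmetric difference, and then bound the number of such tuples by (a constant times) $|E(G_N \triangle H_N)|\cdot N^{k-2} = o(N^k)$. The paper's counting bound $r(r-1)\cdot|E(G_N\triangle H_N)|\cdot N^{r-2}$ is exactly your $k(k-1)N^{k-2}D_N$ for pairings, so no further comment is needed.
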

\begin{proof}
	First note that $G_{\pi(\bm{i})} = H_{\pi(\bm{i})}$ since this is a graph that depends only on the partition $\pi \in P(m)$ and a tuple $\bm{i}=(i_1,\ldots,i_m)$ with $\ker[\bm{i}] = \pi$ and not on $G_N$ nor $H_N$.
	So, to avoid confusion, let us put $F_{\pi(\bm{i})}= G_{\pi(\bm{i})}= H_{\pi(\bm{i})}$. 
	Due to Theorem \ref{thm_bmt_clt}, it is enough to show that 
	\[
	0 = \lim_{N \to \infty}
	\sum_{\substack{ {\bm i}:[m]\rightarrow [N] \\ \ker[{\bm i}]=\pi}} 
	N^{-m/2} \left( 
	\bm{1}_{F_{\pi(\bm{i})}  \subseteq G_{\bm i}}   - 
	\bm{1}_{F_{\pi(\bm{i})}  \subseteq H_{\bm i}}  \right)
	\]
	for each pairing partition $\pi \in P_2(m)$. 
	Since $\bm{1}_{F_{\pi(\bm{i})}  \subseteq G_{\bm i}} -	\bm{1}_{F_{\pi(\bm{i})}  \subseteq H_{\bm i}} = 0$ unless $F_{\pi(\bm{i})}  \subseteq G_{\bm i}  $ and $F_{\pi(\bm{i}) } \not\subseteq H_{\bm i}  $ or  $F_{\pi(\bm{i})}  \subseteq G_{\bm i} $ and $F_{\pi(\bm{i}) } \not\subseteq H_{\bm i}$, 
	a condition that holds only if $G_{\bm i} \not\subseteq H_{\bm i}$ or $H_{\bm i} \not\subseteq G_{\bm i}$, we obtain 
	\[
	\sum_{\substack{ {\bm i}:[m]\rightarrow [N] \\ \ker[{\bm i}]=\pi}} 
	\abs{\bm{1}_{F_{\pi(\bm{i})}  \subseteq G_{\bm i}} -	\bm{1}_{F_{\pi(\bm{i})}  \subseteq H_{\bm i}}} 
	\leq 
	\abs{S_{N,\pi}} 
	\]
	where ${S}_{N,\pi} =
	\left\{ \bm{i}:[m]\rightarrow[N] : \ker[\bm{i}] = \pi \text{ and } G_{\bm i} \not\subseteq H_{\bm i} \text{ or } H_{\bm i} \not\subseteq G_{\bm i} \right\}.$

	Take $\pi = \{ V_1, V_2, \ldots, V_r\} \in P_2(m)$ with $r$ the number of blocks of $\pi$. 
	For a given tuple $\bm{i} = (i_1,\ldots,i_m)$, the graphs $G_{\bm{i}}$ and $H_{\bm{i}}$ have the same vertex set $\{i_k : k =1,2,\ldots,m\}$. 
	And hence,  $\ker[\bm{i}] = \pi$ implies $\bm{i} \in S_{N,\pi}$ only if $G_{\bm{i}}$ has at least one edge that $H_{\bm{i}}$ does not or $H_{\bm{i}}$ has at least one edge that $G_{\bm{i}}$ does not. %, i.e.,  $E(G_{\bm{i}})\setminus E(H_{\bm{i}}) \neq \emptyset $ or $E(H_{\bm{i}})\setminus E(G_{\bm{i}}) \neq \emptyset $. 
	Thus, for each $\bm{i} \in $, at least one of the graphs $G_{\bm{i}}$ and $H_{\bm{i}}$ shares at least one edge with $G_N \triangle H_N$ the symmetric difference of $G_N$ and $H_N$. 
	This implies all elements in ${S}_{N,\pi}$ can be constructed in the following way: (1) pick two distinct blocks $V,W \in \pi$, an edge $(j_V,j_W) \in E(G_N \triangle H_N)$, and $r-2$ distinct values $j_U\in [N] \setminus \{ j_V, j_W\}$ for $U \in \pi \setminus \{V,W\}$; (2) take $\bm{i} = (i_1,\ldots,i_m)$ where $i_k = j_U$ provided $k \in U$. 
	The previous construction is not necessarily injective but it does define a set containing ${S}_{N,\pi}$.  
	So, we get  
	\[
	\abs{ S_{N,\pi} }
	\leq 
	r (r-1) \cdot \abs{ E(G_N \triangle H_N) } \cdot  (N-2) (N-3) \cdots (N-(r-1))
	\leq 
	r (r-1) \cdot \abs{ E(G_N \triangle H_N) } \cdot  N^{r-2}
	\]
	The desired result then follows since  $r \leq m/2$ and $\lim_{N \to \infty}\abs{ E(G_N \triangle H_N) }/N^2 = 0$ imply 
	\[ 
	\lim_{N \to \infty}  \frac{ r (r-1) \cdot \abs{ E(G_N \triangle H_N) } \cdot  N^{r-2} }{N^{m/2}}  = 0 . 
	\]
\end{proof}

\begin{cor}
	If the independence graph $G_N$ of the variables $a_1,a_2,\ldots,a_N$ has order $o(N^2)$, then  $(a_1 + \cdots + a_N)/\sqrt{N}$ converges in moments as $N \to \infty$ to the Bernoulli distribution $(\delta_{-1} + \delta_{+1})/2$. 
\end{cor}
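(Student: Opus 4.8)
The plan is to obtain this as an immediate consequence of the Perturbation Theorem (Theorem~\ref{thm_perturbation}) together with the Boolean Central Limit Theorem (Corollary~\ref{cor_boolean_ctl}): the hypothesis says exactly that $G_N$ is a small perturbation of the null graph, which is the graph governing boolean independence.

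First I would fix an auxiliary sequence $b_1, b_2, \ldots$ of centered random variables with unit variance and uniformly bounded moments of all orders such that $b_1, \ldots, b_N$ is boolean independent for every $N$ ---for instance, boolean independent symmetric Bernoulli variables, whose existence is classical and, in any case, follows from the construction of Section~4 (Theorem~\ref{thm:bmt_operators}) applied to the empty digraph. Denote by $H_N$ the independence graph of $b_1,\ldots,b_N$, so $H_N$ is the null graph on $[N]$. Next, observe that the symmetric difference of $G_N$ and $H_N$ is $G_N$ itself, since $H_N$ has no edges, whence $\abs{E(G_N\triangle H_N)} = \abs{E(G_N)} = o(N^2)$ by hypothesis. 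Theorem~\ref{thm_perturbation} then applies and gives, for every integer $m \geq 1$,
\[
\lim_{N \to \infty} \left[ \varphi\left( \frac{a_1+\cdots+a_N}{\sqrt{N}} \right)^m - \varphi\left( \frac{b_1+\cdots+b_N}{\sqrt{N}} \right)^m \right] = 0 ,
\]
so $(a_1+\cdots+a_N)/\sqrt{N}$ has a limiting distribution if and only if $(b_1+\cdots+b_N)/\sqrt{N}$ does, and the two limits coincide. By Corollary~\ref{cor_boolean_ctl} the latter converges in moments to $(\delta_{-1}+\delta_{+1})/2$; therefore so does the former.

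There is essentially no serious obstacle here, the only point needing a remark being the existence of the comparison family $(b_i)$, which is immediate. If one prefers a self-contained argument avoiding the auxiliary sequence, one can instead argue directly from the moment formula in Theorem~\ref{thm_bmt_clt}: for $\pi \in P_2(2k)$ other than the interval pairing $\pi_0 = \{\{1,2\},\{3,4\},\ldots,\{2k-1,2k\}\}$ the nesting-crossing graph $G_{\pi(\bm i)}$ has at least one edge, so $\bm{1}_{G_{\pi(\bm{i})} \subseteq G_{\bm i}} = 1$ forces $G_{\bm i}$ to contain an edge of $G_N$, and hence the number of $\bm i$ with $\ker[\bm i] = \pi$ contributing to \eqref{momentsCLT} is at most $\abs{E(G_N)}\cdot N^{k-2} = o(N^k)$; since $\pi_0$ contributes $N(N-1)\cdots(N-k+1)/N^{k} \to 1$, one gets $m_{2k} \to 1$ and $m_{2k+1}\to 0$, which are precisely the moments of $(\delta_{-1}+\delta_{+1})/2$.
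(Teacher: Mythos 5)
Your proposal is correct and is essentially the paper's intended argument: the corollary is stated as an immediate consequence of the Perturbation Theorem applied with $H_N$ the null graph (so $G_N \triangle H_N = G_N$ has $o(N^2)$ edges), combined with the Boolean CLT. Your alternative direct count via Theorem \ref{thm_bmt_clt} is also sound, but the first route is exactly what the paper does.
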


\begin{cor}
	If the independence graph $G_N$ of the variables $a_1,a_2,\ldots,a_N$ contains no copy of a complete bipartite graph, then $(a_1 + \cdots + a_N)/\sqrt{N}$ converges in moments as $N \to \infty$ to the Bernoulli distribution $(\delta_{-1} + \delta_{+1})/2$. 
\end{cor}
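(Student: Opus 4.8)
The plan is to reduce this statement to the preceding corollary by invoking a classical extremal bound. First I would fix the meaning of the hypothesis: there are fixed positive integers $s\le t$ such that, for every $N$, the underlying undirected graph of $G_N$ contains no subgraph isomorphic to the complete bipartite graph $K_{s,t}$. Since a directed edge of $G_N$ projects onto at most one edge of this underlying graph and, conversely, each underlying edge lifts to at least one directed edge, the numbers of edges of $G_N$ and of its underlying graph agree up to a factor of $2$; so it suffices to bound the latter.

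The key step is the K\H{o}v\'ari--S\'os--Tur\'an theorem: a $K_{s,t}$-free graph on $N$ vertices (with $s\le t$) has at most
\[
\tfrac12\!\left((t-1)^{1/s}(N-s+1)N^{1-1/s}+(s-1)N\right)
\]
edges, which is $O(N^{2-1/s})$. As $2-1/s<2$, this yields $\abs{E(G_N)} = O(N^{2-1/s}) = o(N^2)$, and hence $G_N$ has order $o(N^2)$.

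It then remains only to quote the previous corollary: an independence graph of order $o(N^2)$ forces $(a_1+\cdots+a_N)/\sqrt{N}$ to converge in moments to the Bernoulli distribution $(\delta_{-1}+\delta_{+1})/2$. That corollary is in turn obtained by comparing $G_N$ with the null graph via the Perturbation Theorem (Theorem \ref{thm_perturbation}) together with the Boolean CLT (Corollary \ref{cor_boolean_ctl}). The only point requiring care is the reading of ``contains no copy of a complete bipartite graph'' and the attendant directed-versus-undirected bookkeeping; once that is settled, the K\H{o}v\'ari--S\'os--Tur\'an estimate supplies the sparsity and no further combinatorics on partitions is needed. I do not expect a genuine obstacle here --- the content is entirely in the extremal graph theory input.
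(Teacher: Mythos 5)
Your proposal is correct and follows essentially the same route as the paper: the paper's proof likewise consists of invoking the K\H{o}v\'ari--S\'os--Tur\'an theorem to conclude that $\abs{E(G_N)} = O(N^{2-1/r}) = o(N^2)$ and then applying the Perturbation Theorem (via the preceding corollary) to reduce to the Boolean CLT. Your extra care about the directed-versus-undirected edge count is a reasonable bookkeeping point that the paper elides, but it changes nothing substantive.
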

\begin{proof}
	This follows from Theorem \ref{thm_perturbation} and the Kovari-Sós-Turán theorem, the latter stating that for fixed integers $s \geq r \geq 2$ any $N$-vertex graph with at least $CN^{2-\frac{1}{r}}$ edges contains a complete bipartite subgraph $K_{r,s}$.  %any be  an upper bound for the number of edges of an $N$-vertex graph $G$ with no copy of a complete bipartite graph as a subgraph.    The result is followed by the Kovari-Sós-Turán theorem, because in that case we would have less than $CN^{2-1/r}$ edges that is $o(N^2)$.
\end{proof}

\begin{prop}

	Let for each $N$, let $a_1,\ldots,a_N,$ and $b_1,\ldots,b_N,$ be a couple of tuples of i.d. selfadjoint random variables  with mean zero and variance one. 
	Suppose that $G_N$ and $H_N$ are the independence graphs of $a_1,\ldots,a_N$ and $b_1,\ldots,b_N$, respectively. 
	If $G_N$ is a subgraph of $J_N$, for all $N$ and if $(a_1 + \cdots + a_N)/\sqrt{N}$ and $(b_1 + \cdots + b_N)/\sqrt{N}$ converge in moments to $X$ and $Y$, respectively, then $\|X\|_\infty \leq \|Y\|_\infty$.
\end{prop}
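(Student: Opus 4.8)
The plan is to deduce moment-by-moment domination $\varphi(X^{m})\le\varphi(Y^{m})$ for every even $m$ (odd moments vanish on both sides) from the BMT central limit theorem, and then pass from domination of moments to domination of the $L^\infty$-norms using the monotonicity of $L^{p}$-norms of a probability measure. Here I read the hypothesis as $G_N\subseteq H_N$ for every $N$.

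First I would record the key monotonicity input. Since $E(G_N)\subseteq E(H_N)$ and both graphs have vertex set $[N]$, for every tuple $\bm i:[m]\to[N]$ the restriction $G_{\bm i}$ of $G_N$ to $\{i_1,\dots,i_m\}$ is a subgraph of the corresponding restriction $H_{\bm i}$ of $H_N$, because restriction to a common vertex set preserves the subgraph relation. Hence, for every $\pi\in P_2(m)$ and every $\bm i$ with $\ker[\bm i]=\pi$, we have the pointwise inequality of indicators
\[
\mathbf 1_{G_{\pi(\bm i)}\subseteq G_{\bm i}}\ \le\ \mathbf 1_{G_{\pi(\bm i)}\subseteq H_{\bm i}},
\]
since any subgraph of $G_{\bm i}$ is automatically a subgraph of $H_{\bm i}$, and $G_{\pi(\bm i)}$ depends only on $\pi$ and $\bm i$ and not on $G_N$ or $H_N$, exactly as noted in the proof of Theorem \ref{thm_perturbation}. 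For odd $m$ there is nothing to prove: $P_2(m)=\emptyset$, so Theorem \ref{thm_bmt_clt} and Remark \ref{oddmoments} give $\varphi(X^m)=\varphi(Y^m)=0$. For $m=2k$, summing the displayed inequality over all $\pi\in P_2(2k)$ and all $\bm i$ with $\ker[\bm i]=\pi$, dividing by $N^{k}$, and invoking the formula of Theorem \ref{thm_bmt_clt} for both families yields
\[
\varphi\left(\frac{a_1+\cdots+a_N}{\sqrt N}\right)^{2k}\ \le\ \varphi\left(\frac{b_1+\cdots+b_N}{\sqrt N}\right)^{2k}+O(N^{-1/2}),
\]
so letting $N\to\infty$ and using convergence in moments gives $\varphi(X^{2k})\le\varphi(Y^{2k})$ for every $k\ge 1$.

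To finish I would translate this into the statement about norms. By Remark \ref{oddmoments}, the limiting distributions $\mu_X$ and $\mu_Y$ are genuine, symmetric, moment-determinate probability measures with $\int t^{2k}\,d\mu_X=\varphi(X^{2k})$, $\int t^{2k}\,d\mu_Y=\varphi(Y^{2k})$, and $\|X\|_\infty=\sup\{|t|:t\in\operatorname{supp}\mu_X\}\in[0,\infty]$, similarly for $Y$. For a probability measure the quantity $\big(\int t^{2k}\,d\mu\big)^{1/2k}=\bigl\||t|\bigr\|_{L^{2k}(\mu)}$ is non-decreasing in $k$ and converges, in $[0,\infty]$, to $\bigl\||t|\bigr\|_{L^\infty(\mu)}=\sup\{|t|:t\in\operatorname{supp}\mu\}$. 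Hence $\big(\int t^{2k}\,d\mu_X\big)^{1/2k}\le\big(\int t^{2k}\,d\mu_Y\big)^{1/2k}$ for all $k$, and letting $k\to\infty$ gives $\|X\|_\infty\le\|Y\|_\infty$.

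There is no serious obstacle beyond bookkeeping: one must handle the $O(N^{-1/2})$ error terms correctly when passing the finite-$N$ inequality to the limit, and one must ensure the limiting objects are honest probability measures so that the $L^\infty$-reading of $\|X\|_\infty$ is legitimate — both points are already supplied by Theorem \ref{thm_bmt_clt} and Remark \ref{oddmoments}. The degenerate case $\|Y\|_\infty=+\infty$ (for instance $H_N$ complete, $\mu_Y$ Gaussian) requires no separate treatment, as the monotone limit above is taken in the extended reals.
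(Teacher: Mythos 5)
Your proof is correct and reaches the conclusion by the same two-stage strategy as the paper: first establish $m_{2k}(X)\le m_{2k}(Y)$ for all $k$, then pass to $\|\cdot\|_\infty$ via the monotone convergence of $\bigl(\int t^{2k}\,d\mu\bigr)^{1/2k}$ to the essential supremum. The difference is in how the moment domination is obtained. The paper asserts directly that, after coupling the marginals so that $a_i$ and $b_i$ have the same distribution, every mixed moment satisfies $\varphi(a_{i_1}\cdots a_{i_k})\le\varphi(b_{i_1}\cdots b_{i_k})$; this pointwise claim is not justified there and is in fact delicate, since for a general partition the block-product $\prod_{V\in\ker_G[\bm i]}\varphi((a_{i_k})|_V)$ need not increase when $\ker_G[\bm i]$ coarsens. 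You instead work at the level of the CLT expansion of Theorem \ref{thm_bmt_clt}, where only pair partitions survive and each contribution is an indicator $\bm{1}_{G_{\pi(\bm i)}\subseteq G_{\bm i}}$, so the monotonicity under $G_{\bm i}\subseteq H_{\bm i}$ is immediate; the lower-order terms are absorbed into the $O(N^{-1/2})$ error before taking $N\to\infty$. This buys you a fully rigorous derivation of $m_{2k}(X)\le m_{2k}(Y)$ at no extra cost, and your remark that the case $\|Y\|_\infty=+\infty$ needs no separate treatment (the limit being taken in the extended reals) cleanly replaces the paper's preliminary reduction to the compactly supported case.
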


\begin{proof}

  If $Y$ has unbounded support, then the statement follows by definition. Otherwise $\|Y\|_\infty$ is finite and thus determined by moments.

  Notice that both  limiting distributions do not depend on the choice of $a_i$'s, and we may assume that $a_i$ has the same distribution as $b_i$, for all $i$.
  
 Now, for any index $\mathbf{i}=(i_1,\dots,i_k)$,
 
	\[ \varphi(a_{i_1}\cdots a_{i_k})
	\ \, \leq 
	\varphi(b_{i_1}\cdots a_{i_k}) \]
	Consequently, the moments of 
$(b_1 + \cdots + b_N)/\sqrt{N}$ are larger than the moments of $(a_1 + \cdots + a_N)/\sqrt{N}$. 
By taking limits we see that, if we denote the even moments of $X$ by $(m_{2n}(X))_{n>0}$,  and the moments of Y by $(m_{2n}(Y))_n>0$, we have the inequality
 $$m_{2n}(X)\leq m_{2n}(Y),\quad \text{for all } n.$$
 This means that $(m_{2n}(X))^{1/n}\leq \|Y\|_\infty $ and thus, letting $n\to \infty$ we see that  $\|X\|_\infty \leq  \|Y\|_\infty$.
\end{proof}

\begin{thm}
	Let $M_N$ denote the greatest integer $M \geq 0$ so that the full graph $K_M$ is contained if $G_N$. 
	If $\liminf_{N \to \infty} M_N/N >0$, then the limit distribution of $(a_1 + \cdots + a_N)/\sqrt{N}$ if it exists has non-compact support. 
\end{thm}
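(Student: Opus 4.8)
The plan is to bound the even moments of the normalized sum from below by using the large cliques supplied by the hypothesis --- on which the variables behave exactly as in the tensor (Gaussian) CLT --- and then to observe that these lower bounds grow too fast to be the moments of a compactly supported measure. Assume the limiting distribution $\mu$ exists; by Remark \ref{oddmoments} convergence in distribution is then equivalent to convergence in moments, and $\mu$ is symmetric, so it suffices to control the even moments $m_{2k}(\mu)$.

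First I would fix $k\geq 1$ and, for each $N$, choose a set $C_N\subseteq[N]$ of $M_N$ vertices on which $K_{M_N}$ sits inside $G_N$, i.e.\ $G_N$ restricted to $C_N$ is the complete digraph. Applying Theorem \ref{thm_bmt_clt} with $m=2k$ gives
\[
\varphi\!\left(\frac{a_1+\cdots+a_N}{\sqrt{N}}\right)^{2k}
= \sum_{\pi\in P_2(2k)} N^{-k}\!\!\sum_{\substack{\bm{i}:[2k]\to[N]\\ \ker[\bm{i}]=\pi}}\!\!\bm{1}_{G_{\pi(\bm{i})}\subseteq G_{\bm{i}}}\;+\;O(N^{-1/2}).
\]
The key point is that if the range of $\bm{i}$ lies in $C_N$, then $G_{\bm{i}}$ is the complete digraph on its vertex set, so $G_{\pi(\bm{i})}\subseteq G_{\bm{i}}$ holds automatically. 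Restricting the inner sum to such tuples and counting them --- a pairing of $[2k]$ has $k$ blocks, each to be sent to a distinct value of $C_N$ --- yields, for every $\pi\in P_2(2k)$,
\[
N^{-k}\!\!\sum_{\substack{\bm{i}:[2k]\to[N]\\ \ker[\bm{i}]=\pi}}\!\!\bm{1}_{G_{\pi(\bm{i})}\subseteq G_{\bm{i}}}\;\geq\; N^{-k}\,M_N(M_N-1)\cdots(M_N-k+1),
\]
and hence $\varphi\big((a_1+\cdots+a_N)/\sqrt{N}\big)^{2k}\geq \#P_2(2k)\cdot N^{-k}M_N(M_N-1)\cdots(M_N-k+1)+O(N^{-1/2})$.

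Next I would let $N\to\infty$. Writing $c=\liminf_N M_N/N>0$, the hypothesis forces $M_N\to\infty$, so $M_N(M_N-1)\cdots(M_N-k+1)/M_N^{\,k}\to 1$ and therefore $\liminf_N N^{-k}M_N(M_N-1)\cdots(M_N-k+1)=\liminf_N (M_N/N)^k=c^k$. Since the left-hand side of the displayed inequality converges to $m_{2k}(\mu)$ and the error term vanishes, we get
\[
m_{2k}(\mu)\;\geq\; \#P_2(2k)\,c^k\;=\;(2k-1)!!\,c^k\qquad\text{for every }k\geq 1.
\]
Finally, if $\mu$ had support in some $[-R,R]$ then $m_{2k}(\mu)\leq R^{2k}$, so the bound would force $\big((2k-1)!!\big)^{1/2k}c^{1/2}\leq R$ for all $k$; but $(2k-1)!!$ grows superexponentially, so $\big((2k-1)!!\big)^{1/2k}\to\infty$, a contradiction. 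Thus $\mu$ has non-compact support. I do not expect a conceptual obstacle here; the main thing to get right is the bookkeeping in passing from the per-$N$ inequality to the moment inequality for $\mu$ --- checking that the $O(N^{-1/2})$ term is harmless because $k$ is fixed while $N\to\infty$, that the falling-factorial count is exactly the number of injections of the $k$ blocks of $\pi$ into $C_N$, and that $\liminf_N (M_N/N)^k=c^k$.
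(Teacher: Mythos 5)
Your proposal is correct and follows essentially the same route as the paper's proof: restrict the index tuples in Theorem \ref{thm_bmt_clt} to a clique of size $M_N$, where the indicator is automatically $1$, to obtain the lower bound $m_{2k}(\mu)\geq \#P_2(2k)\,c^k$ with $c=\liminf_N M_N/N$, and then conclude from the superexponential growth of $(2k-1)!!$ that no bound $m_{2k}\leq R^{2k}$ can hold. The only cosmetic difference is that the paper works with a constant $C$ satisfying $2C=\liminf_N M_N/N$ and an explicit threshold $\tilde N_k$ rather than taking the $\liminf$ of the falling factorial directly.
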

\begin{proof}
	Suppose $(a_1 + \cdots + a_N)/\sqrt{N}$ has an analytical limiting distribution, i.e., there exists a probability measure $\mu$ on the real line so that 
	\[
	\lim_{N \to \infty} \varphi \left(\frac{a_1 + \cdots + a_N}{\sqrt{N}}\right)^{k}
	= \ \int_{-\infty}^{+\infty} t^k  \, d \mu(t)
	=: m_k(\mu)
	\]
	for all integers $k \geq 1$. 
	Note that if $\mu$ has compact support, so $\mu((-\infty,-L)\cup (+L,+\infty)) = 0$ for some $L \geq 0$, then $\sup_{k \geq 1} \sqrt[k]{\abs{m_k(\mu)}} \leq L < \infty$. 
	Thus, to prove $\mu$ has no compact support, it enough to show that $\sup_{k \geq 1} \sqrt[2k]{\abs{m_{2k}(\mu)}} = + \infty $. 

	By hypothesis, for each integer $N \geq 1$, there exists a set $J_N = \{ j_1 < j_2< \cdots < j_{M_N}\} \subset[N] $ so that $G_N$ contains the full graph on $J_N$, so  $(j_k,j_\ell)$ is an edge of $G_N$ for any distinct $j_k,j_\ell \in J_N$. 
	Thus, if $\bm{i} = (i_1,i_2,\ldots,i_{2k})$ satisfies $i_r \in J_N$ for $r=1,2,\ldots,2k$, the graph $G_{\bm i}$ is the full graph since it is the restriction of $G_N$ to vertex set $\{i_r \mid r = 1, 2, \ldots, 2k \} \subset J_N$, and hence $G_{\pi(\bm{i})}$ is always a subgraph of $G_{\bm i}$ with $\pi = \ker[\bm{i}]$. 
	So, for any pairing partition $\pi \in P_2(2k)$, we get   
	\[
	\sum_{\substack{ \bm{i}:[2k]\rightarrow [N] \\ \ker[{\bm i}]=\pi}}  
	\bm{1}_{G_{\pi(\bm{i})}  \subseteq G_{\bm i}}  
	\ \ \geq 
	\sum_{\substack{ \bm{i}:[2k]\rightarrow J_N \\ \ker[{\bm i}]=\pi}}  
	\bm{1}_{G_{\pi(\bm{i})}  \subseteq G_{\bm i}}
	= \ 
	M_N (M_N -1 ) \cdots (M_N-k+1) 
	\]
	where $k$ is the number of blocks in $\pi$ since it is a pairing. 
	Put $2 C = \liminf_{N \to \infty} M_N /N >0$. 
	Thus, for each $k \geq 1$, there exists $\tilde{N}_k \geq 1$ so that $N \geq \tilde{N}_k$ implies $M_N \geq k$ and 
	\[
	\left( \frac{M_N}{N} \right)  \left( \frac{M_N-1}{N} \right) \cdots \left( \frac{M_N-k+1}{N} \right)
	\geq 
	C^k.   
	\]
	Hence, Theorem \ref{thm_bmt_clt} implies $m_{2k}(\mu) \geq \abs{ P_2(2k) } C^k$ since the last two inequalities give 
	\[
	\sum_{\pi \in P_2(2k)}  \ \ 
	N^{-k}
	\sum_{\substack{ \bm{i}:[2k]\rightarrow [N] \\ \ker[{\bm i}]=\pi}}  
	\bm{1}_{G_{\pi(\bm{i})}  \subseteq G_{\bm i}}  
	\ \ \geq 
	\sum_{\pi \in P_2(2k)}  \ \ 
	C^k  
	\ \ =  \ \ 
	\abs{ P_2(2k) } C^k
	\]
	for $N \geq \tilde{N}_k$. 
	Finally, $\abs{ P_2(2k) } = (2k-1)(2k-3) \cdots (1)$ implies $\sup_{k \geq 1} \sqrt[2k]{\abs{m_{2k}(\mu)}} = + \infty$. 
\end{proof}

Finally, we consider the role of consistency in the BMT Central Limit Theorems.

\begin{prop}\label{prop_disjoint_conv}
	Let $a_1,\ldots,a_{M_N},a_{M_N+1},\ldots,a_{M_N+L_N}$ be centered variables with unit variance, uniformly bounded moments of all orders, and independence graph $G_{M_N + L_N}$ where $N = M_N +L_N$.  
	Suppose $(a_1 + \cdots + a_{M_N})/\sqrt{M_N}$ and $(a_{M_N+1} + \cdots + a_{M_N+L_N})/\sqrt{L_N}$ converge in moments to $a_M$ and $a_L$, respectively. %, as $N \to \infty$. 
	If $t = \lim_{N \to \infty} M_N / N $ exists, then $(a_1 + \cdots + a_N)/\sqrt{N}$ converges in moments to $\sqrt{t} \cdot a_M \, + \, \sqrt{1-t} \cdot a_L$ with $a_M$ and $a_L$ 
	\begin{enumerate}[(1)]
		\item boolean independent if $(i,j),(j,i) \notin E_{M_N + L_N}$ whenever $i \in [M_N]$ and $j \in [M_N + L_N] \setminus [M_N]$
		\item monotone independent if $(j,i) \in E_{M_N + L_N}$ and $(i,j) \notin E_{M_N + L_N}$ whenever $i \in [M_N]$ and $j \in [M_N + L_N] \setminus [M_N]$
		\item tensor independent if $(i,j),(j,i) \in E_{M_N + L_N}$ whenever $i \in [M_N]$ and $j \in [M_N + L_N] \setminus [M_N]$
	\end{enumerate}
\end{prop}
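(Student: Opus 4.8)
The plan is to split the normalized full sum into the contributions of the two index blocks, use consistency (Proposition~\ref{prop_associativity}) to identify the independence between those two contributions, and then pass to the limit term by term in the multilinear expansion of the $m$-th power.

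Set $b_M^{(N)}=(a_1+\cdots+a_{M_N})/\sqrt{M_N}$ and $b_L^{(N)}=(a_{M_N+1}+\cdots+a_{M_N+L_N})/\sqrt{L_N}$, so that
\[
\frac{a_1+\cdots+a_N}{\sqrt N}=\sqrt{\tfrac{M_N}{N}}\,b_M^{(N)}+\sqrt{\tfrac{L_N}{N}}\,b_L^{(N)},
\]
with $M_N/N\to t$ and $L_N/N\to 1-t$. Apply Proposition~\ref{prop_associativity} to the partition $\{[M_N],\,[N]\setminus[M_N]\}$ of $[N]$, with $\mathcal B_M=\mathrm{alg}(a_i:i\le M_N)$ and $\mathcal B_L=\mathrm{alg}(a_i:i>M_N)$: hypothesis (1) is exactly the hypothesis of part~(ii), hypothesis (3) that of part~(i), and hypothesis (2) that of part~(iii) with the order on $\{\mathcal B_M,\mathcal B_L\}$ given by $\mathcal B_M<\mathcal B_L$ (the pattern $(j,i)\in E$, $(i,j)\notin E$ for $i\in[M_N]$, $j\notin[M_N]$ says precisely that the edge from the $L$-block to the $M$-block is present and the reverse is absent, which is what part~(iii) requires with the $M$-block smaller). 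Hence for every $N$ the two variables $b_M^{(N)}\in\mathcal B_M$ and $b_L^{(N)}\in\mathcal B_L$ are boolean (case 1), tensor (case 3), or monotone (case 2) independent.

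Now fix, in a non-commutative probability space $(\mathcal C,\psi)$, variables $a_M$ and $a_L$ with the prescribed limiting distributions that are boolean (resp.\ monotone with $a_M$ smaller, resp.\ tensor) independent; such a realization exists. The key point is that each of boolean, monotone, and tensor independence of a pair of variables expresses every joint moment as one fixed, space-independent expression in the individual moments of the two variables: grouping consecutive equal letters turns any word into an alternating product of powers, and then the factorization rules for boolean and tensor, or the recursion M.1--M.2 for monotone (where in the two-algebra case every letter from the larger algebra sitting in an interior position is a peak, so M.1 strips it), reduce it to a product of one-variable moments. Since $\varphi\big((b_M^{(N)})^j\big)\to\psi(a_M^j)$ and $\varphi\big((b_L^{(N)})^j\big)\to\psi(a_L^j)$ for every $j$, it follows that for every word $w$ of length $m$ in two letters,
\[
\varphi\!\left(w\big(b_M^{(N)},b_L^{(N)}\big)\right)\ \xrightarrow[N\to\infty]{}\ \psi\!\left(w(a_M,a_L)\right).
\]

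Finally, expand multilinearly:
\[
\varphi\!\left(\left(\frac{a_1+\cdots+a_N}{\sqrt N}\right)^{m}\right)=\sum_{\epsilon\in\{M,L\}^{m}}\left(\tfrac{M_N}{N}\right)^{p(\epsilon)/2}\left(\tfrac{L_N}{N}\right)^{(m-p(\epsilon))/2}\,\varphi\!\left(z^{(N)}_{\epsilon_1}\cdots z^{(N)}_{\epsilon_m}\right),
\]
where $z^{(N)}_M=b_M^{(N)}$, $z^{(N)}_L=b_L^{(N)}$, and $p(\epsilon)$ is the number of indices $k$ with $\epsilon_k=M$. Letting $N\to\infty$, each coefficient tends to $t^{p(\epsilon)/2}(1-t)^{(m-p(\epsilon))/2}$ and each factor $\varphi(z^{(N)}_{\epsilon_1}\cdots z^{(N)}_{\epsilon_m})$ tends to $\psi(\tilde z_{\epsilon_1}\cdots\tilde z_{\epsilon_m})$ with $\tilde z_M=a_M$, $\tilde z_L=a_L$, by the previous step; resumming, the right-hand side converges to $\psi\big((\sqrt t\,a_M+\sqrt{1-t}\,a_L)^m\big)$, the $m$-th moment of $\sqrt t\,a_M+\sqrt{1-t}\,a_L$. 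I expect the only delicate points to be bookkeeping ones: matching hypotheses (1)--(3) with the three parts of Proposition~\ref{prop_associativity}, in particular getting the monotone order right, and recording that mixed moments of boolean, monotone, or tensor independent pairs are universal (hence continuous) functions of the marginal moments, so that convergence of the two block sums transfers to all joint moments.
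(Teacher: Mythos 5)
Your proof is correct and follows essentially the same route as the paper: decompose the normalized sum into the two block sums, invoke Proposition \ref{prop_associativity} to identify the boolean/monotone/tensor independence between the two blocks (with the same matching of hypotheses (1)--(3) to its parts, including the order $\mathcal B_M<\mathcal B_L$ in the monotone case), and pass to the limit in the multilinear expansion. The one thing you add is an explicit justification of the step the paper leaves implicit, namely that joint moments of a boolean, monotone, or tensor independent pair are universal polynomial functions of the marginal moments, so convergence of the two block sums transfers to all mixed moments; this is a worthwhile clarification and is stated correctly.
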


\begin{proof}
	Take $a_{N,1} = (a_1 + \cdots + a_{M_N})/\sqrt{N}$ and $a_{N,2} = (a_{M_N+1} + \cdots + a_{M_N+L_N})/\sqrt{N}$. 
	Thus, for each integer $m\geq 1$, we get   
	\begin{align*}
		\varphi \left(\frac{a_1 + \cdots + a_N}{\sqrt{N}}\right)^m
		=
		\sum_{i_1,\ldots,i_m=1}^2  \varphi \, ( a_{N,i_1} a_{N,i_2} \cdots  a_{N,i_m} ) . 
	\end{align*}
	Now, if \emph{(1) ((2),(3))} holds, we have $a_{N,1}$ and $a_{N,2}$ boolean (monotone, tensor, respectively) independent due to Proposition \ref{prop_associativity}, and hence the last equality implies 
	\[ 
	\lim_{N \to \infty} \varphi
	\left(\frac{a_1 + \cdots + a_N}{\sqrt{N}}\right)^m
	=
	\varphi \, (\sqrt{t} \cdot a_M \, + \, \sqrt{1-t} \cdot a_L)^m
	\]
	with $a_M$ and $a_L$ boolean (monotone, tensor, respectively) independent since $a_{N,1}$ and $a_{N,2}$ converge in moments to $\sqrt{t} \cdot a_M$ and $\sqrt{1-t} \cdot a_L$, respectively. 
\end{proof}

\begin{exa}
	Suppose the independence graph $G_N$ of the variables $a_1,\cdots,a_N$ is the the complete bipartite graph $K_{M_N,L_N}$ with $N = M_N + L_N$. 
	From Corollary \ref{cor_boolean_ctl}, it follows that each $(a_1 + \cdots + a_{M_N})/\sqrt{M_N}$ and $(a_{M_N+1} + \cdots + a_{M_N+L_N})/\sqrt{L_N}$ converge in moments to a Bernoulli distribution $(\delta_{-1} + \delta_{+1})/2$. 
	If additionally we have $\lim_{N\to \infty} = t$ exists, then $0 \leq t \leq 1$ and $(a_1+\cdots+a_N)/\sqrt{N}$ converge in moments to 
	\[
	\left[
	\frac{1}{2}		\delta_{-\sqrt{t}}
	+
	\frac{1}{2}		\delta_{+\sqrt{t}}
	\right]  \ast \left[
	\frac{1}{2}		\delta_{-\sqrt{1-t}}
	+
	\frac{1}{2}		\delta_{+\sqrt{1-t}}
	\right] 
	\]
	where $\ast$ denotes the classical convolution of measures. 
\end{exa}

\begin{exa}
	Suppose the independence graph $G_N$ of the variables $a_1,\cdots,a_N$ is the the disjoint union of two complete graphs $K_{M_N}$ and $K_{L_N}$ with vertex sets $[M_N]$ and $[M_N+L_N]\setminus[M_N]$ ,respectively, and $N = M_N + L_N$. 
	From Corollary \ref{cor_tensor_ctl}, it follows that each $(a_1 + \cdots + a_{M_N})/\sqrt{M_N}$ and $(a_{M_N+1} + \cdots + a_{M_N+L_N})/\sqrt{L_N}$ converge in moments to a normal distribution $\mathcal{N}(0,1)$. 
	Thus, if $\lim_{N\to \infty} = t$ exists, then $0 \leq t \leq 1$ and $(a_1+\cdots+a_N)/\sqrt{N}$ converge in moments to 
	\[
	\mathcal{N}(0,\sqrt{t})
	\, \uplus \,
	\mathcal{N}(0,\sqrt{1-t})
	\]
	where $\uplus$ denotes the Boolean convolution of measures. 
\end{exa}

\begin{exa}
	Suppose $G_N$ is the Turán Graph $T(N,r_N)$, a complete multi-partite graph formed by partitioning a set of $N$ vertices into $r_N$ subsets with sizes as equal as possible. 
	This graph becomes the empty graph (when $r_N=1$), the complete graph (when $r_N=N$), and a complete bipartite graph (when $r_N=2$). 
	If $r_N \to r <\infty$, when $N\to \infty$, then the bmt central limit theorem associated to $T(N,r_N)$ has as limit distribution a convolution of $r$ Bernoulli random variables. 
\end{exa}

\begin{rem}
	Proposition \ref{prop_disjoint_conv} and last examples shows the number of edges does not determine the support compactness of the limit distribution in Theorem \ref{thm_bmt_clt}. 
	Indeed, suppose $G_N=K_{\lfloor N/2 \rfloor} \sqcup \emptyset_{\lceil N/2 \rceil}$ and $H_N=K_{\lfloor N/2 \rfloor,\lceil N/2 \rceil}$ and let $n_N$ and $m_N$ denote the the number of edges of $G_N$ and  $H_N$, respectively. 
	Then, we have 
	$$n_N=\frac{\lfloor \frac{N}{2} \rfloor (\lfloor \frac{N}{2} \rfloor-1) }{2}\leq \lfloor \frac{N^2}{8} \rfloor\leq \lfloor \frac{N^2}{4} \rfloor\leq \lfloor \frac{N}{2} \rfloor\lceil \frac{N}{2} \rceil=m_N.$$
	Thus, the limit distribution associated to $H_N$ is compactly supported (being the classical convolution of two Bernoulli distributions), while the limit distribution associated to $G_N$ is not (being the boolean convolution of a Gaussian distribution and a Bernoulli distribution).
\end{rem}

\section{Poisson Limit Theorem}

Apart from the central limit theorem,  the law of rare events also known as Poisson limit theorem is probably the the most important theorem in probability. 

In this limit theorem,  one considers sums of independent Bernoulli variables with common parameter $\lambda/n$ and considers its limit.  In BMT independence we can have such limits.  The combinatorics appearing are similar as the one for the central limit theorem. Not surprisingly, the main difference is that here we need to consider the set of all instead of  pair partitions to all partitions.

\begin{thm}\label{thm_poisson}
	Suppose $a_1, a_2, \ldots, a_N$ BMT independent with respect to the graph $G_N$. If each $a_i$ has Bernoulli distribution $\mu_N = (1-\frac{\lambda}{N})\delta_0 + \frac{\lambda}{N} \delta_{1} $, then the moments of the sum $a_1+\cdots a_N$ satisfy
	\begin{eqnarray*}
		\varphi\left[ \left( a_1+\cdots+a_N  \right)_{}^m\right]
		\ \, =
		\sum_{\pi \in P(m)}  
		\lambda^{\#(\pi)}  \ N^{-\#(\pi)}
		\sum_{\substack{ {\bm i}:[m]\rightarrow [N] \\ \ker[{\bm i}]=\pi}}  
		\bm{1}_{G_{\pi(\bm{i})}  \subseteq G_{\bm i}}  
		\quad	+ \quad O(N^{-1}) \ . 
	\end{eqnarray*} 
\end{thm}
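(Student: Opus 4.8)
The plan is to follow closely the combinatorial argument of Theorem \ref{thm_bmt_clt}, the crucial difference being that the Bernoulli law $\mu_N$ makes every block of $\ker_G[\bm{i}]$ contribute exactly the factor $\lambda/N$, so that \emph{all} partitions survive (in the CLT the hypothesis $\varphi(a_i)=0$ discards the partitions with singletons, but here $\varphi(a_i)=\lambda/N\ne 0$).

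First I would expand the $m$-th moment and group the functions $\bm{i}\colon[m]\to[N]$ by their kernel partition:
\[
\varphi\left[(a_1+\cdots+a_N)^m\right]
=\sum_{i_1,\ldots,i_m=1}^{N}\varphi(a_{i_1}\cdots a_{i_m})
=\sum_{\pi\in P(m)}\ \sum_{\substack{\bm{i}\colon[m]\to[N]\\ \ker[\bm{i}]=\pi}}\varphi(a_{i_1}\cdots a_{i_m}).
\]
By Definition \ref{def: BMT independence}, $\varphi(a_{i_1}\cdots a_{i_m})=\prod_{V\in\ker_G[\bm{i}]}\varphi\left[(a_{i_k})|_V\right]$. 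Since $\ker_G[\bm{i}]$ refines $\ker[\bm{i}]$, the index $i_k$ is constant on each block $V$ of $\ker_G[\bm{i}]$, hence $(a_{i_k})|_V=a_j^{|V|}$ for the common value $j$; and as $a_j$ has distribution $\mu_N$ we get $\varphi(a_j^{\ell})=\lambda/N$ for every $\ell\ge 1$. Therefore $\varphi(a_{i_1}\cdots a_{i_m})=(\lambda/N)^{\#(\ker_G[\bm{i}])}$, so that
\[
\varphi\left[(a_1+\cdots+a_N)^m\right]
=\sum_{\pi\in P(m)}\ \sum_{\substack{\bm{i}\colon[m]\to[N]\\ \ker[\bm{i}]=\pi}}\left(\frac{\lambda}{N}\right)^{\#(\ker_G[\bm{i}])}.
\]

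Next I would split the inner sum according to whether $\ker_G[\bm{i}]=\ker[\bm{i}]$. Since $\ker_G[\bm{i}]$ is always a refinement of $\ker[\bm{i}]$, we have $\#(\ker_G[\bm{i}])\ge\#(\pi)$, with equality precisely when $\ker_G[\bm{i}]=\ker[\bm{i}]=\pi$, which by Lemma \ref{kertopi} (exactly as recorded at the beginning of the proof of Proposition \ref{prop_cerouno_ver2}) is equivalent to $G_{\pi(\bm{i})}\subseteq G_{\bm{i}}$. The terms with equality contribute
\[
\sum_{\pi\in P(m)}\lambda^{\#(\pi)}\,N^{-\#(\pi)}\sum_{\substack{\bm{i}\colon[m]\to[N]\\ \ker[\bm{i}]=\pi}}\bm{1}_{G_{\pi(\bm{i})}\subseteq G_{\bm{i}}},
\]
which is the asserted main term. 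For the remaining terms, namely those with $\ker_G[\bm{i}]\neq\ker[\bm{i}]$, one has $\#(\ker_G[\bm{i}])\ge\#(\pi)+1$ and also $\#(\ker_G[\bm{i}])\le m$, so each such term is at most $\max(1,\lambda^m)\,N^{-\#(\pi)-1}$; since there are at most $N^{\#(\pi)}$ functions $\bm{i}$ with $\ker[\bm{i}]=\pi$, and $P(m)$ is finite, the total of these terms is $O(N^{-1})$. Combining the two parts yields the claimed formula. The argument is essentially bookkeeping; the only point needing care is this last uniform bound on the error term — verifying that the extra block forced when $\ker_G[\bm{i}]\ne\ker[\bm{i}]$ always costs exactly one additional power of $N$ against the $N^{\#(\pi)}$ available choices — but this is immediate from $\#(\ker_G[\bm{i}])\ge\#(\pi)+1$ together with $\lambda$ being a fixed constant, so no genuine obstacle is expected.
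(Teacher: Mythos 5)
Your proposal is correct and follows essentially the same route as the paper's proof: expand the moment, group indices by $\ker[\bm{i}]$, use BMT independence and $\varphi(a_j^\ell)=\lambda/N$ to reduce each summand to $(\lambda/N)^{\#(\ker_G[\bm{i}])}$, identify the main term via Lemma \ref{kertopi}, and absorb the strict-refinement terms into $O(N^{-1})$ using $\#(\ker_G[\bm{i}])\ge\#(\pi)+1$. The only cosmetic difference is that the paper organizes the error estimate as a sum over all refinements $\theta\le\pi$ rather than a binary split on $\ker_G[\bm{i}]=\ker[\bm{i}]$, which amounts to the same bound.
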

\begin{proof}
	%
	%Put $S_N=a_1+\cdots+a_N$. 
	%
	Expanding the product $(a_1 + \cdots + a_N)^m$ and using the fact that  $\ker[\bm{i}] = \ker[\bm{i'}]$ defines an equivalence relation on $\{ \bm{i} \mid \bm{i}: [m]\to [N]\}$, we have
	\[
	%\varphi(S^m_N) =
	\varphi	( {a_1 + \cdots + a_N} )^m
	\ \ = 
	\sum_{\pi \in P(m)}  \ 
	\sum_{\substack{ {\bm i}:[m]\rightarrow [N] \\ \ker[{\bm i}]=\pi}}  
	\varphi( a_{i_1} a_{i_2} \cdots a_{i_m} ).
	\]
	Notice that $\varphi(a^k_i) =\lambda/N$ since each $a_i$ has Bernoulli distribution $\mu_N = (1-\frac{\lambda}{N})\delta_0 + \frac{\lambda}{N} \delta_{1}$. 
	So, for each partition $\pi \in P(m)$, the BMT independence of $a_1,a_2,\ldots,a_N$  gives  
	\[
	\sum_{\substack{ {\bm i}:[m]\rightarrow [N] \\ \ker[{\bm i}]=\pi}}  
	\varphi( a_{i_1} a_{i_2} \cdots a_{i_m} )
	\ \ = \sum_{\substack{ {\bm i}:[m]\rightarrow [N] \\ \ker[{\bm i}]=\pi}}  	\prod_{V \in \ker_G[\bm{i}]} \varphi((a_{i_k})|V)
	\ \ =
	\sum_{\substack{ {\bm i}:[m]\rightarrow [N] \\ \ker[{\bm i}]=\pi}}  
	\left(\frac{\lambda}{N}\right)^{\# (\ker_G[\bm{i}])}			
	\]
	where $\# (\ker_G[\bm{i}])$ denotes the number of blocks in the partition $\ker_G[\bm{i}]$. 
	Now, recall that $\ker_G[\bm{i}]$ is a refinement of $\ker[{\bm i}]$, so we can write    
	\[
	\sum_{\substack{ {\bm i}:[m]\rightarrow [N] \\ \ker[{\bm i}]=\pi}}  
	\left(\frac{\lambda}{N}\right)^{\# (\ker_G[\bm{i}])}
	\ \ = \ \ \  
	\sum_{\substack{ \theta \in P(m) \\ \theta \leq \pi } }
	\sum_{\substack{ {\bm i}:[m]\rightarrow [N] \\ \ker[\bm{i}]=\pi \\ \ker_G[\bm{i}] = \theta}} 
	\left(\frac{\lambda}{N}\right)^{\# (\ker_G[\bm{i}])} .
	\]
	Moreover, for any partitions $\pi, \theta \in P(m)$ with $\theta \leq \pi$, we have
	\[
	\abs{
		\sum_{\substack{ {\bm i}:[m]\rightarrow [N] \\ \ker[\bm{i}]=\pi, \ker_G[\bm{i}] = \theta}} 
		\left(\frac{\lambda}{N}\right)^{\# (\ker_G[\bm{i}])}}
	\leq 
	\left(
	\frac{ \abs{\lambda} }{N} \right)^{\#(\theta) }
	\abs{
		\sum_{\substack{ {\bm i}:[m]\rightarrow [N] \\ \ker[\bm{i}]=\pi }} 
		1 
	}
	\leq 
	\abs{\lambda}^{\#(\theta)}  N^{\#(\pi) - \#(\theta)}
	\]
	where $\#(\pi)$ and $\#(\theta)$ denote the number of blocks in $\pi$ and $\theta$, respectively. 
	Hence, since $\theta \leq \pi$ implies $\#(\theta) \geq \#(\pi)$ with equality only if $\theta = \pi$, we obtain 
	\[
	\sum_{\substack{ {\bm i}:[m]\rightarrow [N] \\ \ker[{\bm i}]=\pi}}  
	\left(\frac{\lambda}{N}\right)^{\# (\ker_G[\bm{i}])}
	\ \ = \ \ 
	\sum_{\substack{ {\bm i}:[m]\rightarrow [N] \\ \ker[\bm{i}]=\pi, \ker_G[\bm{i}] = \pi}} 
	\left(\frac{\lambda}{N}\right)^{\# (\pi)} 
	\quad + \quad O(N^{-1}) .
	\]
	Therefore, due the equivalence of the conditions $G_{\pi(\bm{i})}  \subseteq G_{\bm i}$ and $\ker[\bm{i}]= \ker_G[\bm{i}] $ for any ${\bm i}:[m]\rightarrow [N]$ with $\ker[{\bm i}]=\pi$, we get 
	\[ \varphi\left[ \left( a_1+\cdots+a_N  \right)_{}^m\right]
	\ \, =
	\sum_{\pi \in P(m)}  
	\lambda^{\#(\pi)}  \ N^{-\#(\pi)}
	\sum_{\substack{ {\bm i}:[m]\rightarrow [N] \\ \ker[{\bm i}]=\pi}}  
	\bm{1}_{G_{\pi(\bm{i})}  \subseteq G_{\bm i}}  
	\quad	+ \quad O(N^{-1}) \ .
	\]
\end{proof}

%\textcolor{purple}{\textbf{J.19} Finish this corollary's proof \ \\ } 

Similarly as for the CLT we may deduce the Boolean, monotone and tensor Poisson limit Theorems, as applications of the above theorem by using similar arguments as in the proofs of Corollaries \ref{cor_boolean_ctl}, \ref{cor_tensor_ctl}  and  \ref{cor_monotone_ctl}.
We leave the details of the proof to the reader. 

\begin{cor}
	Let for all $N$, $a_1,\ldots, a_N$ have a Bernoulli distribution $\mu_N=(1-\frac{\lambda}{N})\delta_0+\frac{\lambda}{N}\delta_1$.  Then as $N\to \infty$
	
	\begin{enumerate} 
		\item If $a_1,\ldots, a_N$ are BMT independent with respect to the complete graph $G_N=K_N$, 
		then the variable $a_1+\cdots a_N$ converges in moments to a classical Poisson distribution.
		\item 
		If $a_1,\ldots, a_N$ are BMT independent with respect to the empty graph $G_N=\emptyset_N$,  then the variable $a_1+\cdots a_N$ converges in moments to a Boolean Poisson distribution, \cite{BoolWo}.
		\item If $a_1,\ldots, a_N$ are BMT independent with respect to the graph $G_N$ associated with a total order $<$, then the variable $a_1+\cdots a_N$ converges in moments to a monotone Poisson distribution, \cite{Mur2000}
	\end{enumerate}
\end{cor}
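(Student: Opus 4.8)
The plan is to specialize Theorem~\ref{thm_poisson} to each of the three graph families, following verbatim the strategy used for Corollaries~\ref{cor_boolean_ctl}, \ref{cor_tensor_ctl}, and~\ref{cor_monotone_ctl}. For any partition $\pi\in P(m)$ the number of functions $\bm{i}:[m]\to[N]$ with $\ker[\bm{i}]=\pi$ is the falling factorial $N(N-1)\cdots(N-\#(\pi)+1)$, which is asymptotic to $N^{\#(\pi)}$; hence, after multiplying by $\lambda^{\#(\pi)}N^{-\#(\pi)}$ and letting $N\to\infty$, the contribution of $\pi$ to the right-hand side of Theorem~\ref{thm_poisson} tends to $\lambda^{\#(\pi)}$ times the limiting proportion of those $\bm{i}$ that satisfy $G_{\pi(\bm{i})}\subseteq G_{\bm{i}}$, and the $O(N^{-1})$ error disappears. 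So the whole task reduces to a counting problem, exactly as in the CLT.

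When $G_N=K_N$, every $G_{\bm{i}}$ is complete, so $\bm{1}_{G_{\pi(\bm{i})}\subseteq G_{\bm{i}}}=1$ for all $\bm{i}$ and all $\pi$, and we obtain $\lim_{N\to\infty}\varphi[(a_1+\cdots+a_N)^m]=\sum_{\pi\in P(m)}\lambda^{\#(\pi)}$, which is precisely the $m$-th moment of the classical Poisson law of parameter $\lambda$ (Touchard's formula). When $G_N$ is the empty graph, $G_{\bm{i}}$ has no edges, so $\bm{1}_{G_{\pi(\bm{i})}\subseteq G_{\bm{i}}}=1$ if and only if the nesting-crossing graph $G_{\pi}$ itself has no edges; by the equivalent description of $E_{\pi}$ (there is an edge from one block to another as soon as the first has an element strictly between the minimum and the maximum of the second), this happens precisely when $\pi$ is an interval partition of $[m]$, and we get $\sum_{\pi\ \text{interval partition of}\ [m]}\lambda^{\#(\pi)}$, the moment sequence of the Boolean Poisson law, since the Boolean moment--cumulant relation is a sum over interval partitions and the Boolean cumulants of the $N$-fold Boolean convolution power of $\mu_N$ all converge to $\lambda$.

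When $G_N$ is the digraph of the total order, with $E_N=\{(j,i):i<j\}$, the graph $G_{\bm{i}}$ never contains a pair of opposite edges whereas the nesting-crossing graph $G_{\pi}$ of any partition with a crossing does; hence only non-crossing $\pi$ contribute. For $\pi\in\mathcal{NC}(m)$ the graph $G_{\pi}$ is the nesting forest of $\pi$, and $G_{\pi(\bm{i})}\subseteq G_{\bm{i}}$ holds exactly when the values $i_B$ attached to the blocks respect the nesting order (a nested block receiving the larger label), so the number of admissible $\bm{i}$ with $\ker[\bm{i}]=\pi$ is $\binom{N}{\#(\pi)}\,M(\pi)$, with $M(\pi)$ the number of monotone labelings of the nesting forest, as in the proof of Corollary~\ref{cor_monotone_ctl}. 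Dividing by $N^{\#(\pi)}$ gives $M(\pi)/\#(\pi)!$ in the limit, and summing over $\pi$ produces $\sum_{\pi\in\mathcal{NC}(m)}\lambda^{\#(\pi)}\,M(\pi)/\#(\pi)!$, which one recognizes as the moment sequence of the monotone Poisson distribution of \cite{Mur2000} through the monotone-partition combinatorics.

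The index counts are routine and parallel the CLT corollaries. The only genuinely delicate step --- the main obstacle --- is matching each of the three combinatorial moment sums with the corresponding named Poisson law: for the classical case this is the classical moment--cumulant formula, for the Boolean case the interval-partition moment--cumulant formula with all cumulants equal to $\lambda$, and for the monotone case the monotone moment--cumulant calculus (constant monotone cumulant $\lambda$), which is where one must appeal to \cite{Mur2000} rather than computing from first principles.
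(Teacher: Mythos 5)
Your proof is correct and is exactly the argument the paper intends: the authors explicitly leave this corollary to the reader as an application of Theorem \ref{thm_poisson} "by similar arguments as in the proofs of Corollaries \ref{cor_boolean_ctl}, \ref{cor_tensor_ctl} and \ref{cor_monotone_ctl}," and your three case analyses (all partitions for $K_N$, interval partitions for the empty graph via $E_\pi=\emptyset$, non-crossing partitions with the $\binom{N}{\#(\pi)}M(\pi)$ count for the total order) carry that out faithfully, including the correct identification of the three limiting moment sequences.
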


\section{Concluding Remarks}

We have started the development of new framework both analytical and algebraic that enables us to investigate arbitrary mixtures of Boolean, monotone and tensor independence. 
This setting provides a unified approach for these three notions of independence through a digraph depicting pair-wise independence relations and what we have called the kernel of function subordinated to a digraph. 
Nonetheless, besides establishing Central and Poisson-Type Limit Theorems, there are many interesting problems that remain open. 
We mention a few in this last section.

\begin{enumerate}[(1)]
\item The ideas in this paper and those from \cite{JW2020} could be combined to obtain a corresponding notion of BMFT independence. 
In \cite{JW2020}, the authors introduced the notion of $\mathcal{T}$\emph{-tree} independence, which allowed them to study mixtures of Boolean, free, and monotone independence. 
A closer look at our notion of the kernel subordinated to a digraph reveals that this object splits mixed moments according to the commutation relations from tensor independence, much in the spirit of the reduced non-crossing partitions from mixtures of free and tensor independence in \cite{eps2016,Lambda2004}.  
A notion of  BMFT independence seems likely to arise from combining the maximal non-crossing partitions from $\mathcal{T}$\emph{-tree} independence and the commutation relations encoded in the kernel subordinated to a digraph. 

\item  The CLT and Poisson Limit theorem hint on the possible set of partitions that describe the combinatorics of BMT independence. It would be interesting if such intuition can be further studied in a systematical way. One possible direction and an important open question is to define cumulants with respect to BMT independence.

 	\item It has been shown that if we consider sequences of random variables where  commuting and anticommuting  relations are taken randomly, one obtains a deterministic limit in the CLT, which correspond to $q$-Gaussian distributions \cite{speicher}, see also \cite{blitvic} for a generalization. It would be interesting if considering random directed graphs as the independence graphs in the BMT-central limit theorem one would get some interesting interpolation between, Gaussian, arcsine and symmetric Bernoulli distributions.

\item It would be desirable to classify all probability measures that arise as limits of the CLT for BMT independent random variables.   
	In particular, one should determine the digraphs that recover and generalize the limiting measures obtained in \cite{BM2010} for BM independence.  
	These latter measures were shown to be symmetric and compactly supported, they include the semi-circular law and their even moments satisfy the generalized recurrence relation for Catalan numbers, namely, 
	\[ g_n = \sum_{r=1}^{n} \gamma_r \, g_{r-1}g_{n-r} . \]
	Section 5 in this paper shows that non-compact measures appear in the CLT for BMT independent random variables, so they are not covered by BM independence.

\item  Diving into the last remark, one would like to find out properties of digraphs $G_N=(V_N,E_N)$ that determine the compactness of limiting measures in the CLT for BMT independent random variables. 
This seems to be a non-trivial problem since it amounts to analyze the growth as $N$ goes to infinity of the number of sub-graphs of $G_N$ that are isomorphic to each nesting-crossing graph $G_\pi$. 
Moreover, the edge set $E_N$ must be of order $N^2$ ---i.e., $\lim_{N \to \infty} |E_N|/N^2 > 0$--- to obtain a limiting measure different from the Bernoulli distribution, but at this order both compact and non-compact measure appear. 

\end{enumerate}

%\subsubsection*{Acknowledgements}We thank Prof. Lenczewski for various comments and for pointing out the papers \cite{Len1}

\end{document}